\renewcommand\l@subsection{\@tocline{2}{0pt}{2pc}{5pc}{}}
\newtheorem*{rep@theorem}{\rep@title}
\newcommand{\newreptheorem}[2]{%
	\newenvironment{rep#1}[1]{%
		\def\rep@title{#2 \ref{##1}}%
		\begin{rep@theorem}}%
		{\end{rep@theorem}}} 
\theoremstyle{plain}
\newtheorem{thm}{Theorem}[section]
\newtheorem{thmp}{Theorem}[section] 
\newtheorem{corp}[thmp]{Corollary}
\newtheorem{prop}[thm]{Proposition}
\newtheorem{lemma}[thm]{Lemma}
\newtheorem{cor}[thm]{Corollary}
\theoremstyle{definition}
\newtheorem{example}[thm]{Example}
\newtheorem{def/ex}[thm]{Definition/Example}
\theoremstyle{remark}
\newtheorem{rem}[thm]{Remark}
\def\phi{\varphi}
\newcommand{\no}{\noindent}
\newcommand{\bex}{\begin{example}\em}
	\newcommand{\eex}{\end{example}}
\newcommand{\R}{{\mathbb R}}
\newcommand{\Z}{{\mathbb Z}}
\newcommand{\bx}{{\mathbf x}}
\newcommand{\K}{{\mathscr{K}}}
\newcommand{\ev}{\operatorname{ev}}
\newcommand{\supp}{\operatorname{supp}}
\def\cprime{$'$} 
\def\Cnf{\text{\rm Conf}}  
\newcommand{\vvcenteredinclude}[2]{\begingroup
	\setbox0=\hbox{\includegraphics[scale=#1]{#2}}%
	\parbox{\wd0}{\box0}\endgroup}
\title[]{From integrals to combinatorial formulas\\ of finite type invariants - a case study}
\date{\today}
\subjclass[2020]{Primary: 57M25; Secondary: 57K16}	 	
\keywords{finite type invariants, configuration space integrals, Gauss diagrams, arrow polynomials, coefficients of the Conway polynomial}
 \author{Robyn Brooks}
 \address{
 	Univeristy of Utah
 	Salt Lake City, Utah 84112 } 
 \email{Robyn.Brooks@utah.edu}
 \author{Rafa{\l} Komendarczyk}
 \address{
 	Tulane University,
 	New Orleans, Louisiana 70118 } 
 \email{rako@tulane.edu}
\begin{document}
\setcounter{section}{0}

\begin{abstract}
We obtain a localized version of the configuration space integral for the Casson knot invariant, where the standard symmetric Gauss form is replaced with a locally supported form. An interesting technical difference between the arguments presented here and the classical arguments is that the vanishing of integrals over hidden and anomalous faces does not require the well known ``involution tricks''. The integral formula easily yields the well-known arrow diagram expression for regular knot diagrams, first presented in the work by Polyak and Viro. Moreover, it yields an arrow diagram count for the multicrossing knot diagrams, such as petal diagrams and gives a new lower bound for the {\em {\"u}bercrossing number}. Previously, the known arrow diagram formulas were applicable only to the regular knot diagrams.
\end{abstract}

\maketitle

\section{Introduction}

Configuration space integrals,  originally introduced by Bott and Taubes in \cite{Bott-Taubes:1994} and motivated by the work of Kontsevich \cite{Kontsevich:1993, Kontsevich:1994}, are a far-reaching generalization of the celebrated Gauss linking number
 integral formula \cite{Gauss:1975} for a $2$--component link.  Given a smooth embedding of $S^1\sqcup S^1$ into $\R^3$, $L=(L_1, L_2):S^1\sqcup S^1\longrightarrow \R^3$, the linking number can be computed as:
\begin{equation}\label{eq:lk-number-integral}
 \operatorname{lk}(L_1,L_2)=\varint\limits_{S^1\times S^1} h^\ast_{2,1} \omega=\frac{1}{2\pi} \varint^{1}_0 \varint^{1}_0 \frac{\langle \dot{L}_1(s), \dot{L}_2(t), L_2(s)-L_1(t)\rangle}{|L_2(s)-L_1(t)|^3} ds\,dt,
\end{equation}
where $h_{12} :S^1\times S^1\longrightarrow S^2$ denotes the Gauss map of $L$: $h_{12}(s,t)=\frac{L_2(s)-L_1(t)}{|L_2(s)-L_1(t)|}$, and $h^\ast_{12} \omega$ the pullback of the 
rotationally symmetric  unit area form  $\omega$ over $S^2$.
Note that the domain $S^1\times S^1$ can be regarded as the ordered configuration space $\Cnf(S^1\sqcup S^1; 1,1)$ 
of a pair of points in $S^1\sqcup S^1$, with the first and second point restricted to the first and second factor in $S^1\sqcup S^1$ respectively.  Recall, given a topological space $X$, the {\em configuration space of $n$ points in $X$} is the ``deleted'' product
\[
\Cnf(X;n)=\{(x_1,\ldots,x_n)\in X^n\ |\ x_i\neq x_j \  \text{for} \ i\neq j\}.
\]

\subsection{Integral for the Casson knot invariant} General configuration space integrals for knots and links were investigated in number of works: \cite{Bott-Taubes:1994, Cattaneo-Cotta-Ramusino-Longoni:2002, Koytcheff-Munson-Volic:2013, Thurston:1999, Volic:2007}, and involve integrals over various pieces of configuration spaces of points {\em on the knot} and {\em off the knot} (these pieces are labeled by trivalent diagrams).
Besides the linking number, one of the simplest such integrals computes the second coefficient of the Conway polynomial $c_2=c_2(K)$, \cite{Conway:1970} of a closed knot $K:I\longrightarrow \R^3$, $I=[0,1]$, $K(0)=K(1)$, which is also known as the {\em Casson knot invariant}, \cite{Polyak-Viro:2001}. In the following, we denote the space of classical pointed knots by $\mathscr{K}_{S^1}$, i.e. the smooth pointed embeddings of $S^1$ in $\R^3$. First rigorously established in the works of Bott and Taubes \cite{Bott-Taubes:1994}, Bar-Natan \cite{Bar-Natan:1991}, and later generalized in \cite{Thurston:1999, Volic:2007}, the integral reads as follows:
\begin{equation}\label{eq:casson-integral-original}
\begin{split}
 c_2(K)  & = \frac{1}{4}\varint\limits_{\Cnf(I;4)} \omega_{13}\wedge\omega_{24}-\frac{1}{3}\varint\limits_{\Cnf(I,K;3,1)} \omega_{1\mathbf{4}}\wedge\omega_{2\mathbf{4}}\wedge\omega_{3\mathbf{4}},
 \end{split}
\end{equation}
where\footnote{Note that these are connected components of the respective open configuration spaces, and we use the same notation $\Cnf$ for them.}
\begin{equation}\label{eq:connected-comp-conf}
\begin{split}
\Cnf(I;4) & \cong\bigl\{(x_1,x_2,x_3,x_4)\in I^4\ |\ 0\leq x_1<x_2<x_3<x_4\leq 1\bigr\},\\
\Cnf(I,K;3,1) & \cong\bigl\{(x_1,x_2,x_3,K,\mathbf{x})\ |\ (x_1,x_2,x_3)\in I^3; 0\leq x_1<x_2<x_3\leq 1; \\ 
& \qquad\qquad\qquad\qquad\quad  \mathbf{x}\in \R^3-\{K(x_1), K(x_2), K(x_3)\}; K\in \mathscr{K}_{S^1} \bigr\},
\end{split}
\end{equation}
and integral terms in \eqref{eq:casson-integral-original} can be explicitly written as 
\begin{multline*}
 \varint\limits_{\Cnf(I;4)} \omega_{13}\wedge\omega_{24}=\frac{1}{(4\pi)^2} \varint\limits_{\Cnf(I;4)} \prod^2_{i=1}\frac{\bigl\langle K(x_{i+2})-K(x_i),\dot{K}(x_i),\dot{K}(x_{i+2})\bigr\rangle}{|K(x_{i+2})-K(x_i)|^3}  dx_1 dx_2 dx_3 dx_4,\\ \varint\limits_{\Cnf(I,\K_{S^1};3,1)} \omega_{1\mathbf{4}}\wedge\omega_{2\mathbf{4}}\wedge\omega_{3\mathbf{4}}=\frac{1}{(4\pi)^3}\varint\limits_{\Cnf(I,\K_{S^1};3,1)} \operatorname{det}\bigl(E_1(x_1,\mathbf{x}),E_2(x_2,\mathbf{x}),E_3(x_3,\mathbf{x})\bigr) dx_1 dx_2 dx_3 d\mathbf{x},
\end{multline*}
where $E_i(x_i,\mathbf{x})=\tfrac{(K(x_i)-\mathbf{x})\times \dot{K}(x_i)}{|K(x_i)-\mathbf{x}|^3}$, $i=1,2,3$. As in the case of the linking number, the $2$--forms $\omega_{ij}$ in \eqref{eq:casson-integral-original} are obtained as pullbacks of the standard unit area form $\omega$ on $S^2$, along the {\em Gauss maps}: 
\[
 \tilde{h}_{ij}(x_i,x_j)=\frac{K(x_j)-K(x_i)}{|K(x_j)-K(x_i)|},\qquad \tilde{h}_{i\mathbf{4}}(x_i,\mathbf{x})=\frac{\mathbf{x}-K(x_i)}{|\mathbf{x}-K(x_i)|}.
\]
One immediate issue in establishing integrals of type \eqref{eq:casson-integral-original} is addressing the divergence of forms $\omega_{ji}$ as $x_i$ and $x_j$ or $\mathbf{x}$ approach each other. 
This is achieved by replacing the domains of integration $\Cnf(I;4)$ and $\Cnf(I,K;3,1)$ with their compactifications $\Cnf[I;4]$ and $\Cnf[I,K;3,1]$, \cite{Volic:2007,Bott-Taubes:1994}. In order to simplify our considerations, in place of closed knots $\K_{S^1}$, we consider the space of {\em long knots} $\K=\K_{\R}$ \cite{Polyak-Viro:2001, Volic:2007, Koytcheff-Munson-Volic:2013}, i.e. smooth embeddings $K:\R\longrightarrow \R^3$ of $\R$ into $\R^3$, which are standard outside some box, e.g. the image of $K$: $\{K(s)\ |\ s\in \R\}$ is contained in the cube $-1\leq x,y,z\leq 1$ in $\R^3$ for $-1\leq s\leq 1$ and equal to the $x$--axis outside the cube $s\geq 1$ and $s\leq -1$, see Figure \ref{fig:figure-8-plane-diagram}. The integral \eqref{eq:casson-integral-original} can be defined analogously for long knots, by replacing the configuration spaces in \eqref{eq:connected-comp-conf} over $S^1$, with the ones over $\R$ and appropriately adjusting the coefficients in \eqref{eq:casson-integral-original}, which is further reviewed in Section \ref{sec:bott-taubes}.

\begin{figure}[ht]
\begin{minipage}{.48\linewidth}
	\centering
	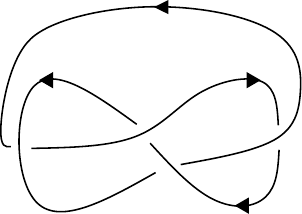
\end{minipage}
\begin{minipage}{.48\linewidth}
	\centering
	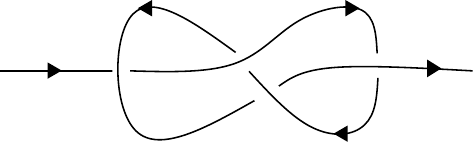
\end{minipage}
	\caption{Plane diagrams for the figure 8 long knot (right) and figure 8 knot (left)}
	\label{fig:figure-8-plane-diagram}
\end{figure}
\subsection{From integrals to combinatorial formulas}\label{sec:integrals-combinatorial} An interesting property of the Gauss linking number integral \eqref{eq:lk-number-integral} is that it easily converts to 
a combinatorial formula for $\operatorname{lk}(L_1,L_2)$. Indeed, \eqref{eq:lk-number-integral} is an integral expression for the degree of the Gauss map $h_{12}$.  
Therefore, whenever $L$ has a regular planar projection\footnote{i.e. with transverse double crossings.} (onto the plane orthogonal to\footnote{we just need any fixed direction vector, w.l.o.g. we choose $N$ (the {\em North})} $N=(0,0,1)$), the local degree formula \cite{Hatcher:2002} applied to the Gauss map parametrized by $h_{12}:(s,u)\xrightarrow{L} (L_1(s),L_2(u))$ and the regular value $N$, yields a signed crossing count formula for $\operatorname{lk}(L_1,L_2)$:
\begin{equation}\label{eq:lk-crossing-count}
\operatorname{lk}(L_1,L_2)=\deg(h_{12})=\sum_{s\in h^{-1}_{12}(N)} \operatorname{sign}(D h_{12}(s))=\sum_{s\in \operatorname{Cross}(L)} \operatorname{sign}(s),
\end{equation}
$\operatorname{Cross}(L)$ is the set of overcrossings, $L_2$ over $L_1$.
\begin{figure}[h!]
	\centering
	\includegraphics[scale=.7]{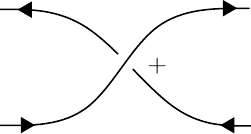}\qquad\qquad \includegraphics[scale=.7]{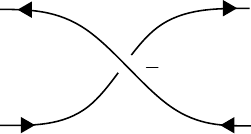}
	\caption{A positive crossing (left) and negative crossing (right).}
	\label{fig:knotcrossings}
\end{figure}
It is a classical argument, c.f. \cite[p. 41]{Bott-Tu:1982}, that the combinatorial formula \eqref{eq:lk-crossing-count} can be obtained directly from the integral \eqref{eq:lk-number-integral},  by replacing the area form $\omega$ on $S^2$ (the generator of $H^2(S^2)$) with a unit area bump form $\eta_N$, supported on an $\varepsilon$--disk $D^2_\varepsilon\subset S^2$, centered at the regular value $N$ of $h_{12}$, i.e.
\begin{equation}\label{eq:eta_N}
\eta_N=\eta^\varepsilon_N=f_{N,\varepsilon}\omega,\qquad \supp(f_{N,\varepsilon})\subset D^2_\varepsilon,\qquad \varint_{S^2} \eta_N=1,
\end{equation}
where $f_{N,\varepsilon}$ is just a real valued bump function supported on the disk $D^2_\varepsilon\subset S^2$ around $N$.
The $2$--form $\eta_N$ is not anti-symmetric i.e. 
\begin{equation}\label{eq:antipodal-eta_N}
 A^\ast \eta_N=-\eta_S,
\end{equation}
under the antipodal map $A:\xi\longrightarrow -\xi$ of $S^2$,
where $S$ is the antipodal point of $N$.
Indeed, substituting $\omega=d\alpha+\eta_N$ Stoke's theorem yields
\begin{equation}\label{eq:lk-eta_N}
\operatorname{lk}(L_1,L_2) =\varint\limits_{S^1\times S^1} h^\ast_{12} \omega=\varint\limits_{S^1\times S^1}  h^\ast_{12} \eta_N =\sum_{s\in h^{-1}_{12}(N)} \varint_{U_s} h^\ast_{12} \eta_N=\sum_{s\in h^{-1}_{12}(N)} \operatorname{sign}(Dh_{12}(s)),
\end{equation}
where $U_s\subset h^{-1}_{12}(D^2_\varepsilon)$ are disjoint neighborhoods of finitely many crossing points $s\in h^{-1}_{12}(N)$ such that $h_{12}|_{U_s}$ is a diffeomorphism (for $\varepsilon$ small enough), by the inverse function theorem. The last identity in the above computation is simply the  
 change of variables for the integral in \eqref{eq:eta_N}, and $\operatorname{sign}(Dh_{12}(s))$ depends  if $h_{12}|_{U_s}$ is orientation preserving or reversing.
 
 A natural question which arises for $c_2(K)$, as well as other finite type invariants, is whether one may derive a combinatorial expressions in the spirit of the derivation \eqref{eq:lk-crossing-count}--\eqref{eq:lk-eta_N}. In the following theorem, we answer this question positively for long knots in the subspace of $\K$:
 \begin{equation}\label{eq:K_N}
  \K_{N}=\{K\in \K\ |\ \dot{K}(s)\nparallel N,\ s\in \R \},
 \end{equation}
 i.e. the space of embeddings, which are transverse to the fixed direction parallel to the vector $N$ in \eqref{eq:eta_N}. 
 Note that paths in  $\K_{N}$ are knot isotopies that allow for the Reidemeister {\bf II} and {\bf III} moves but not the {\bf I} move (in short \textbf{RII} and \textbf{RIII}, but not \textbf{RI}), \cite{Alexander:1928}, \cite{Reidemeister:1927}. Thus, each path connected component of $\K$ is a union of path connected components of $\K_N$, where knots in different components differ only by the Reidemeister {\bf I} move. 
 \begin{thmp}\label{thm:c2-integral}
 	$(i)$ Given a long knot $K$ in $\K_{N}$, we have the integral invariant\footnote{for any $K\in \K_{N}$ the sequence under the limit becomes eventually constant} 
 	\begin{equation}\label{eq:casson-integral-eta}
 	\begin{split}
 	I(K)  & = \lim_{\varepsilon\to 0}\ \Bigl(\varint\limits_{\Cnf[\R;4]} h^\ast_{13}\eta^\varepsilon_N\wedge h^\ast_{42}\eta^\varepsilon_N-\varint\limits_{\Cnf[\R,\K_N;3,1]} h^\ast_{1\mathbf{4}}\eta^\varepsilon_N\wedge h^\ast_{\mathbf{4}2}\eta^\varepsilon_N\wedge h^\ast_{3\mathbf{4}}\eta^\varepsilon_N\Bigr).
	 \end{split}
 	\end{equation} 	
 	$(ii)$ If $K$ admits the regular double--crossing projection onto the plane orthogonal to $N$, we obtain
	\begin{equation}\label{eq:gd_c2_dbl-crossings}
	I(K)=c_2(K)=\langle \vvcenteredinclude{.3}{gd_c2_chord.pdf},G_K\rangle.
	\end{equation}
	$(iii)$  $I(K)$ descends to the knot invariant on $\K$, i.e.  $I:\pi_0(\K)\longrightarrow \Z$.
 \end{thmp}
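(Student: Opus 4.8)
The plan is to transport the linking-number derivation \eqref{eq:lk-crossing-count}--\eqref{eq:lk-eta_N} to the compactified configuration-space bundles over $\K_N$. I regard the two terms of \eqref{eq:casson-integral-eta} as fiber integrals, i.e. pushforwards along the bundle projections with fibers $\Cnf[\R;4]$ and $\Cnf[\R,\K;3,1]$, of wedges of the pulled-back forms $h^\ast_\bullet\eta^\varepsilon_N$. Part $(i)$ is then the assertion that this pushforward is a well-defined, $\varepsilon$-stable, locally constant function on $\K_N$, and part $(ii)$ is a localization computation identifying its value on double-crossing diagrams.

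For $(i)$ I would invoke the Bott--Taubes fiberwise Stokes' theorem. Since $\eta^\varepsilon_N$ is a top form on $S^2$ it is closed, hence each wedge of pullbacks is a closed form on the total space and $dI(K)$ equals the signed sum of the fiber integrals over the codimension-one faces of the two compactifications. The heart of the argument is that every such face contributes zero, and here the localized form $\eta^\varepsilon_N$ does the work of the classical involution trick. Three mechanisms arise. On a principal face of the first integral, where two consecutive points collide, the two surviving Gauss maps $h_{13}$ and $h_{42}$ share the collided coordinate, so $h^\ast_{13}\eta^\varepsilon_N\wedge h^\ast_{42}\eta^\varepsilon_N$ acquires a repeated differential and vanishes identically; the same repeated-coordinate degeneracy kills the faces of the second integral on which the off-knot point $\mathbf{x}$ meets a single strand. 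On a principal face of the second integral where two consecutive strands collide, two of the three legs at $\mathbf{x}$ become antipodal, so they cannot both meet the support $D^2_\varepsilon$ of $\eta^\varepsilon_N$ about $N$ (and not about $S$, by \eqref{eq:antipodal-eta_N}), whence the integrand has empty support for small $\varepsilon$. Finally, on every hidden or anomalous face, either some Gauss map limits to a unit tangent $\dot K/|\dot K|$ or to the direction of a straight end of the long knot — both non-parallel to $N$, hence outside $\supp\eta^\varepsilon_N$, by the defining condition of $\K_N$ — or collapse forces two legs at $\mathbf{x}$ to be antipodal; in all cases $h^\ast_\bullet\eta^\varepsilon_N$ restricts to zero. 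This yields $dI(K)=0$, so $I$ is constant on the path components of $\K_N$, which coincide with those of $\K$. The eventual constancy in $\varepsilon$ follows because for a fixed $K$ the joint preimages of $N$ under the Gauss maps form a finite set, so once $\varepsilon$ is small enough the bump supports are disjoint, frozen, and disjoint from all faces.

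For $(ii)$ I specialize to a $K$ whose projection orthogonal to $N$ is a regular double-crossing diagram and localize each term as in \eqref{eq:lk-eta_N}. In the first integral, $h^\ast_{13}\eta^\varepsilon_N$ concentrates, in the two variables $(x_1,x_3)$, at the isolated solutions of $h_{13}=N$ — precisely the crossings at which the strand through $x_3$ passes over the strand through $x_1$ — and likewise $h^\ast_{42}\eta^\varepsilon_N$ concentrates at the crossings recorded by $(x_2,x_4)$. By the inverse-function-theorem change of variables, each surviving configuration contributes $\operatorname{sign}(Dh_{13})\operatorname{sign}(Dh_{42})$, i.e. the product of the crossing signs, while the ordering $x_1<x_2<x_3<x_4$ forces the two chords to interleave; summing reproduces the Polyak--Viro pairing $\langle\vvcenteredinclude{.3}{gd_c2_chord.pdf},G_K\rangle$. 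The second integral vanishes in the limit, since its integrand is supported where $h_{1\mathbf{4}}=h_{\mathbf{4}2}=h_{3\mathbf{4}}=N$, which forces $K(x_1),K(x_2),K(x_3)$ and $\mathbf{x}$ onto a common line parallel to $N$, i.e. a triple point, which a regular double-crossing projection does not possess. Hence $I(K)$ equals the arrow-diagram count, and its equality with $c_2(K)$ follows either from the classical identification of \eqref{eq:casson-integral-original} with $c_2$ together with part $(i)$, or directly from the Polyak--Viro theorem.

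The step I expect to be the main obstacle is the face analysis in $(i)$, specifically the hidden and anomalous faces involving the off-knot point $\mathbf{x}$: there the Fulton--MacPherson blow-up introduces an internal sphere of collision directions, and one must check that the combination of the antipodal-support obstruction and the tangency condition $\dot K\nparallel N$ truly annihilates the restricted integrand on each stratum rather than merely making it small. A secondary subtlety, in $(ii)$, is the orientation bookkeeping: one must verify that the local degree $\operatorname{sign}(Dh)$ extracted at each crossing matches, sign for sign, the weight the arrow diagram assigns, and in particular that using $\eta^\varepsilon_N$ — which selects $N$ but not its antipode $S$ — correctly encodes the over/under decoration of the Gauss diagram $G_K$.
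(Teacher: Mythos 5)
Your overall architecture (fiberwise Stokes' theorem, face-by-face analysis, then localization via the local degree formula) is the paper's, but your treatment of the principal faces in part $(i)$ contains a genuine error that breaks the proof. You claim that on a principal face of $\Cnf[\R;4]$, say $S_{\{1,2\}}$, the form $h^\ast_{13}\eta^\varepsilon_N\wedge h^\ast_{42}\eta^\varepsilon_N$ restricts to zero because the surviving Gauss maps ``share the collided coordinate'' and a differential repeats. That is false: after the collision $x_2=x_1$ the restricted maps are $h_{13}(x_1,x_3,z)$ and $h_{42}(x_1,x_4,z)$, and in the face-times-plot coordinates $(x_1,x_3,x_4,z)$ the wedge contains components such as $(dx_1\wedge dx_3)\wedge(dx_4\wedge dz)$, which are generically nonzero; a repeated differential would occur only if both maps depended on the same two variables. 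The same objection applies to your claim that the faces $S_{\{i,\mathbf{4}\}}$ of the tripod bundle die by ``repeated-coordinate degeneracy'': there the blow-up sphere $\xi\in S^2$ supplies additional face directions and the restricted form carries a full $\eta_N(\xi)$ factor. Indeed, if your claims were true, each of the two integrals would separately be a knot invariant and the tripod correction would be pointless, contrary to the Bott--Taubes mechanism. What actually happens is that these boundary pushforwards are nonzero $1$-forms along the isotopy (they detect Reidemeister-III-type moments) and must \emph{cancel in pairs} between the two bundles, $((\partial \pi\bigl|_{S_{i,i+1}})_\ast \beta)_{\mathcal{K}}=((\partial \pi'\bigl|_{S_{i,\mathbf{4}}})_\ast \beta')_{\mathcal{K}}$, which the paper proves via Proposition \ref{prop:equal-along}, the dummy $S^2$ factor inserted through $\varint_{S^2}\eta_N=1$ as in \eqref{eq:pi-beta-id-eval}, and the orientation computation \eqref{eq:phi-orientations}, whose sign at $S_{2,3}$ compensates the antipodal sign $-\xi$ appearing in \eqref{eq:h'-bdry-extension-2}. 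Without this cancellation your derivation of $dI=0$ does not go through. (Your remaining face arguments are essentially sound: the tangent-direction and end-direction support arguments match Sections \ref{sec:vanishing-faces-1}, \ref{sec:vanishing-faces-2} and \ref{sec:vanishing-faces-5}, and your antipodal-support argument on the faces $\{1,2\}$, $\{2,3\}$ of the tripod is a valid variant of the paper's observation that $\eta_S\wedge\eta_N=0$ as a $4$-form on $S^2$.)

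There are two further gaps, one of which you flagged yourself. In part $(ii)$ your vanishing argument for the tripod term handles only exact alignment: the absence of triple points kills the locus $h_{1\mathbf{4}}=h_{\mathbf{4}2}=h_{3\mathbf{4}}=N$, but for each fixed $\varepsilon>0$ the support is \emph{not} empty --- when $\mathbf{x}_4$ hovers at small height above the lower strand near a crossing, an entire segment of that strand lies inside the downward $\varepsilon$-cone, so configurations with $K(x_1),K(x_3)$ on the lower strand and $K(x_2)$ on the overpass satisfy all three support conditions; the support concentrates toward the collision faces instead of disappearing, and one cannot conclude the integral is small without mass estimates. The paper instead kills the integrand \emph{pointwise}: straightening the strands near the crossings, the vectors $\mathbf{x}_4-K(x_1)$ and $\mathbf{x}_4-K(x_3)$ span a plane containing the lower-strand line, so $E_1$ and $E_3$ of \eqref{eq:E_i} are both orthogonal to that plane, hence parallel, and $\det[E_1,-E_2,E_3]=0$. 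Finally, the hidden/anomalous faces of $\Cnf[\K_N,\R^3;3,1]$ containing $\mathbf{4}$, which you correctly identified as the main obstacle but left unresolved, are not settled by saying two legs ``become antipodal'' --- on the blow-up they need not be --- but by the quantitative estimate of Section \ref{sec:vanishing-faces-4}: if both $\xi/|\xi|$ and $(K'(x_1,z)u-\xi)/|K'(x_1,z)u-\xi|$ lie in the $\varepsilon$-disk about $N$, then $(K'(x_1,z)\times N)\,u=O(\varepsilon)$, whence $u=O(\varepsilon)$ by the defining condition of $\K_N$, forcing the second leg $\varepsilon$-close to $-N$ and emptying the support for small $\varepsilon$.
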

The combinatorial expressions in \eqref{eq:gd_c2_dbl-crossings}  are known as the arrow diagram formulas and were first introduced by Polyak and Viro in \cite{Polyak-Viro:1994}, and then further developed in subsequent works \cite{Polyak-Viro:2001}, \cite{Chmutov-Duzhin-Mostovoy:2012}. We further obtain an adaptation of the arrow diagram formulas and Gauss diagrams to the setting of long knots with multiple crossings in the following

\begin{thmp}\label{thm:c2-multicrossing}
	 For the multiple--crossing Gauss diagram $G_K$, we have the following generalization of the formula in \eqref{eq:gd_c2_dbl-crossings} 
	\begin{equation}\label{eq:gd_c2_mult-crossings}
		\begin{split}
			I(K)=c_2(K) & =\langle \vvcenteredinclude{.3}{gd_c2_chord.pdf},G_K\rangle+ \tfrac{1}{2}\bigl(\langle \vvcenteredinclude{.3}{gd_c2_chord-left.pdf},G_K\rangle\\
			&\qquad +\langle \vvcenteredinclude{.3}{gd_c2_chord-mid.pdf},G_K\rangle+\langle \vvcenteredinclude{.3}{gd_c2_chord-right.pdf},G_K\rangle\bigr).
		\end{split}
	\end{equation}
\end{thmp}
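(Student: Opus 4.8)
The plan is to build directly on Theorem~\ref{thm:c2-integral}. Part $(i)$ asserts that $I(K)=c_2(K)$ for \emph{every} $K\in\K_N$, with no hypothesis on the planar projection, so the first equality in \eqref{eq:gd_c2_mult-crossings} is already in hand. The entire content of the theorem is therefore the combinatorial evaluation of the localized integral $I(K)$ in \eqref{eq:casson-integral-eta} when the chosen projection onto the plane orthogonal to $N$ is an \"ubercrossing (multiple--crossing) diagram rather than a regular double--crossing one. Thus I would fix such a multicrossing projection and re-run the localization argument behind Theorem~\ref{thm:c2-integral}$(ii)$, isolating exactly the features that change.

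First I would localize both integral terms by the local-degree mechanism of \eqref{eq:lk-eta_N}: for small $\varepsilon$ each factor $h_{ij}^\ast\eta^\varepsilon_N$ concentrates on the preimage $h_{ij}^{-1}(N)$, i.e.\ on configurations whose $(i,j)$--chord projects along $N$. For the first (on--knot) term this means ordered $4$--tuples $x_1<x_2<x_3<x_4$ for which the chords $(1,3)$ and $(4,2)$ each lie over a crossing, signs being read off as in \eqref{eq:lk-crossing-count}. The crucial observation concerns the second (trivalent) term: imposing $h_{1\mathbf 4}=h_{\mathbf 42}=h_{3\mathbf 4}=N$ forces $K(x_1),K(x_2),K(x_3)$ to share a common vertical line and the off--knot point $\mathbf x$ to be sandwiched in height ($\mathbf x$ above $K(x_1)$ and $K(x_3)$, below $K(x_2)$). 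Hence this term localizes precisely at \emph{triple points} of the diagram. A regular double--crossing diagram has none, which is exactly why only a single diagram appears in \eqref{eq:gd_c2_dbl-crossings}; for a multicrossing diagram the trivalent term no longer vanishes and is the sole source of the correction in \eqref{eq:gd_c2_mult-crossings}.

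With this in place the two pieces should assemble as follows. Because $h_{13}$ and $h_{42}$ are transverse to $N$ independently, the inverse function theorem applies factor by factor to the first term, so its signed count reproduces the leading pairing $\langle\vvcenteredinclude{.3}{gd_c2_chord.pdf},G_K\rangle$ read against the multicrossing Gauss diagram $G_K$ that records \emph{every} pairwise crossing at each multiple point; pairs of crossings sharing a single multiple point are automatically subsumed here, since the combinatorial pairing depends only on the linear order of endpoints and their signs. The trivalent term reduces, even at an $n$--fold point, to a sum over \emph{triples} of local strands through a common projection point, the integral over the admissible height interval of $\mathbf x$ contributing when the middle--parameter strand lies on top. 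Sorting these triples by the relative position of the three parameters $x_1<x_2<x_3$ should produce exactly the three patterns \vvcenteredinclude{.3}{gd_c2_chord-left.pdf}, \vvcenteredinclude{.3}{gd_c2_chord-mid.pdf}, \vvcenteredinclude{.3}{gd_c2_chord-right.pdf}, with the overall sign of the subtracted term converting into the stated $+\tfrac12$.

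The main obstacle is precisely this local triple--point computation. Away from multiple points everything is governed by a single transverse Gauss map and collapses to the already-established double--crossing analysis; but at a triple point three factors $h_{i\mathbf 4}^\ast\eta^\varepsilon_N$ degenerate simultaneously as $\varepsilon\to0$, so one cannot invoke transversality of one map and must instead control the joint limit of the trivalent integrand together with the height integration in $\mathbf x$. I expect the delicate points to be (a) extracting the weight $\tfrac12$ from the symmetry of the coincident configurations and the length of the admissible height interval, (b) verifying that the surviving contributions realize exactly the three listed arrow patterns and no others, and (c) showing that all remaining coincident and boundary configurations cancel in pairs --- the localized analogue of the vanishing over hidden and anomalous faces, which, as advertised in the abstract, is carried out here without the classical involution tricks.
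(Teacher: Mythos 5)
Your route is genuinely different from the paper's, and it is worth saying precisely how. The paper does not evaluate the localized integral on a multicrossing projection at all. It first secures $I(K)=c_2(K)$ for every $K\in\K_N$ from invariance combined with the double--crossing evaluation (Lemma \ref{lem:c_2-arrow-regular-crossing} and the corollary following it --- note this is not quite ``part $(i)$'' of Theorem \ref{thm:c2-integral}, which only asserts invariance), and then proves \eqref{eq:gd_c2_mult-crossings} purely combinatorially: perturb the multicrossing diagram to a regular double--crossing diagram $K'$, so that $c_2(K)$ is the standard pairing against $G_{K'}$; use the sign-- and orientation--preserving bijection $F:e(G_{K'})\to e(G_K)$ to split pairs of chords of $G_{K'}$ into those whose $F$--images have distinct endpoints (yielding exactly the first pairing against $G_K$, as in \eqref{eq:sum-E^2-circ}) and those whose images share an endpoint; observe that each shared pair lies in a unique triple crossing; and verify case by case in Table \ref{tb:triplecrossing} that each triple crossing contributes $\tfrac12$ of the sum of the three shared--endpoint pairings. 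The qualitative parts of your analytic plan are sound: the hypothesis $K\in\K_N$ with $\varepsilon<\delta$ does force the tripod integrand's support onto neighborhoods of genuine multiple points (if $x_i\to x_j$ the chord direction tends to $\dot K/|\dot K|\notin\supp\eta_N$, and $\mathbf{x}$ cannot be $\varepsilon$--sandwiched between nearby points of a strand transverse to $N$), so your item (c) is handled by the same reasoning as Sections \ref{sec:vanishing-faces-1}--\ref{sec:vanishing-faces-5}; and your observation that interleaved chord pairs at a single multiple point are already subsumed in the first term is correct and matches the paper's identity \eqref{eq:sum-E^2-circ}.

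The genuine gap is the step you defer, which is the entire content of the theorem on your route: the evaluation of the tripod term at a triple point. There the local--degree mechanism you invoke for the first term is unavailable in principle, not merely delicate: the preimage $h_{3;1}^{-1}(N,N,N)$ contains, for each triple point whose middle--parameter strand is highest, an entire vertical segment of positions of $\mathbf{x}$, so $(N,N,N)$ is not a regular value and the integral is not a signed point count. One must actually compute the $\varepsilon\to0$ limit of $\varint h^\ast_{1\mathbf{4}}\eta^\varepsilon_N\wedge h^\ast_{\mathbf{4}2}\eta^\varepsilon_N\wedge h^\ast_{3\mathbf{4}}\eta^\varepsilon_N$ near each triple point and show it equals $-\tfrac12$ times the local count of the three shared--endpoint patterns, with their specific arrow directions and in all sign assignments --- exactly what the paper's Table \ref{tb:triplecrossing} certifies combinatorially. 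Moreover, your proposed bookkeeping (``sorting these triples by the relative position of the three parameters'') does not obviously produce the three patterns: the exact tripod constraints force $z(K(x_2))>z(\mathbf{x})>\max\bigl(z(K(x_1)),z(K(x_3))\bigr)$, i.e.\ only middle--on--top triples carry exact solutions, whereas the formula \eqref{eq:gd_c2_mult-crossings} (and the first row of Table \ref{tb:triplecrossing}, where the tripod term would have to contribute exactly $-1$) involves left-- and right--shared patterns as well; how the limit redistributes a single one--parameter family of tripod configurations into three two--chord counts with weight $\tfrac12$ is precisely what is missing. As written, your argument establishes where the correction lives but not what it is; closing it requires either carrying out this degenerate--limit computation (in the spirit of Lin--Wang, which the paper warns does not simply reproduce arrow counts), or retreating to the paper's perturbation--and--bijection argument, which bypasses the integral entirely.
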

As a consequence of the above result we obtain a lower bound, in terms of $c_2(K)$, for the {\em {\"u}bercrossing number} $\textit{\"u}(K)$, as well as the {\em petal crossing number} $p(K)$, introduced\footnote{Note that $\textit{\"u}(K)\leq p(K)$.} in \cite{Adams:2012}.
\begin{corp}\label{cor:uber-crossing}
	\[
	|c_2(K)|\leq {\textit{\"u}(K) -1 \choose 3}+{ \textit{\"u}(K) -1 \choose 4}.
	\]
\end{corp}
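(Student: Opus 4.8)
The plan is to substitute an optimal übercrossing projection of $K$ into the combinatorial formula of Theorem~\ref{thm:c2-multicrossing} and then to bound the resulting arrow--diagram pairings by a crude count of admissible subconfigurations. Write $u=\textit{\"u}(K)$ and fix an übercrossing projection of the long knot $K$ onto a plane whose normal we take to be the vector $N$ of \eqref{eq:eta_N}; after a small perturbation we may assume $K\in\K_N$, so that Theorem~\ref{thm:c2-multicrossing} applies. Its single multi-crossing has exactly $u$ strands, so along the oriented line $\R$ the passages through the crossing occur at parameters $x_1<\dots<x_u$, and the multiple--crossing Gauss diagram $G_K$ has these $u$ points as its only arrow endpoints: for every pair $i<j$ there is a single arrow joining $x_i$ and $x_j$, oriented and signed by the relative height of the two strands at the crossing. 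The feature that distinguishes this from an ordinary double--crossing diagram, and that drives the whole estimate, is that each point $x_i$ is now incident to $u-1$ arrows and every pair of points is joined by an arrow.

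Next I would invoke Theorem~\ref{thm:c2-multicrossing}, which expresses $c_2(K)$ as $\langle A_0,G_K\rangle$ plus $\tfrac12$ times the sum of three correction pairings $\langle A_\ell,G_K\rangle$, $\langle A_m,G_K\rangle$, $\langle A_r,G_K\rangle$. Each pairing $\langle A,G_K\rangle$ is by definition a signed count of the order--preserving injections of $A$ into $G_K$, every injection contributing $\pm1$, so the triangle inequality bounds $|\langle A,G_K\rangle|$ by the number of such injections. Because every pair of the points $x_1,\dots,x_u$ is already joined by an arrow in $G_K$, an injection is determined purely by a choice of endpoints: the degree--two diagram $A_0$ has two interleaved arrows with four distinct endpoints, so its injections are indexed by four of the passages (the prescribed order fixing the matching), while each correction diagram $A_\ell,A_m,A_r$ is a three--strand ``triangle'' — three arrows meeting in three passages, a configuration only a multi-crossing can create — so its injections are indexed by three of the passages. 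Moreover, for a fixed triple of strands at most one of the three height patterns $\ell,m,r$ is realised, so the combined correction term, even before the factor $\tfrac12$, is controlled by the number of admissible triples of passages.

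The main obstacle is to sharpen the naive counts $\binom{u}{4}$ and $\binom{u}{3}$ to $\binom{u-1}{4}$ and $\binom{u-1}{3}$; this is where the structure of übercrossing projections, rather than soft counting, must enter. The mechanism I expect to exploit is that one distinguished passage can be normalised away: the strand lying globally on top (equivalently, on the bottom) at the multi-crossing can be isotoped out to the unbounded part of the long knot, and all arrows incident to it carry the same sign, incompatible with the sign alternation demanded by each of the four patterns; hence no admissible injection can use this passage and the effective pool of endpoints has size $u-1$. Granting this, the degree--two term is bounded by $\binom{u-1}{4}$ and the correction term by $\binom{u-1}{3}$, and summing gives
\[
|c_2(K)|\le\binom{u-1}{4}+\binom{u-1}{3},
\]
as claimed. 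The delicate points I would treat carefully are the normalisation of the extremal strand and the verification that the $\tfrac12$--weighted correction pairings indeed do not exceed $\binom{u-1}{3}$ once the signs and the overlap of the three patterns are taken into account.
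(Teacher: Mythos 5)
Your overall strategy is the same as the paper's (Section~\ref{sec:pedal-diagrams}): cut the übercrossing projection open into a long knot with a single multi-crossing, apply Theorem~\ref{thm:c2-multicrossing}, bound the degree-two term by the number of admissible quadruples of passages and the correction term by the number of admissible triples, and reduce the pool from $u$ to $u-1$ by disqualifying one distinguished passage. But your disqualification mechanism is wrong as stated, and this is the crux of the $-1$. The arrow diagrams in Theorem~\ref{thm:c2-multicrossing} are \emph{unsigned}: an embedding is constrained only by the linear order of endpoints and the arrow \emph{orientations}; the signs enter solely as the $\pm1$ weights in $\langle A,G_K\rangle$ and never obstruct admissibility, so ``incompatible with the sign alternation demanded by each of the four patterns'' disqualifies nothing. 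Moreover the premise is itself false: by the sign rule of Section~\ref{sec:pedal-diagrams} (see Figure~\ref{fig:pedal-crossing-signs}), $\operatorname{sign}(\alpha_{a_i,a_j})$ alternates with the parity of $j-i$, so the arrows incident to the top strand do \emph{not} all carry the same sign. What is true, and what the paper exploits, is an orientation statement: the top strand is the over strand of every crossing it participates in, so every arrow incident to its passage points \emph{into} it. This alone still does not exclude it — the top passage could occupy a head position of the degree-two diagram (its second or third endpoint) if it sat in the interior of the base line — so you also need the positional normalization: cutting the petal adjacent to strand $1$ places its passage as the \emph{rightmost} endpoint of the Gauss diagram (the order is $a_2,\dots,a_n,1$), and each of the four diagrams forces its rightmost chosen endpoint to be the \emph{tail} of some arrow; equivalently, the paper's admissibility conditions $a_j<a_k$ (triples) and $a_j<a_l$ (quadruples) are violated when $a_k=1$ or $a_l=1$. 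Your phrase ``isotoped out to the unbounded part'' gestures at this normalization, but the reason you give for exclusion would not survive scrutiny.

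There is a second factual error: the claim that for a fixed triple of strands at most one of the three patterns is realised is false. Table~\ref{tb:triplecrossing} shows, for instance, that the all-positive triple crossing realises \emph{two} patterns (the middle and the left one), each contributing $+1$, for a combined contribution of $2$ before the factor $\tfrac12$. The correct statement, verified case by case in Table~\ref{tb:triplecrossing}, is that the combined signed contribution of each triple lies in $\{0,\pm2\}$ before halving, hence is at most $1$ in absolute value per triple after the factor $\tfrac12$; summing over the at most $\binom{u-1}{3}$ admissible triples (those with the middle strand on top, $a_j<a_i$ and $a_j<a_k$) gives the stated bound. Taken literally, your premise would yield the stronger bound $\tfrac12\binom{u-1}{3}$ for the correction term, which is a signal that the argument is not tracking the actual combinatorics of the three overlapping patterns. (A minor point: Theorem~\ref{thm:c2-multicrossing} is purely combinatorial, so the preliminary perturbation into $\K_N$ is unnecessary.) With the orientation-plus-extremal-position argument substituted for the sign argument, and the per-triple bound taken from Table~\ref{tb:triplecrossing}, your outline becomes the paper's proof.
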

\no We refer the reader to \cite{Adams:2015} for previous results concerning $\textit{\"u}(K)$.

\subsection{Arrow diagrams}\label{sec:arrow-diagrams} A general (multi-crossing) \textit{Gauss diagram} of a long knot $K:\R\longrightarrow\R^3$, denoted $G_K$, is a graph (obtained from a planar projection of $K$) with:
\begin{itemize}
\item[$(i)$] a base strand - an oriented copy of $\R$, and
\item[$(ii)$] the preimages of each crossing connected with a chord.
\end{itemize}

To incorporate the information of over and under strands of the crossing, each chord is oriented from the lower branch to the upper one.  Each chord in $G_K$ is also decorated with the crossing sign. Figure \ref{fig:4_1-gd} shows the Gauss diagram of $4_1$ obtained from the knot diagram shown in Figure \ref{fig:figure-8-plane-diagram}.

\begin{figure}[h]
\centering
\begingroup%
  \makeatletter%
  \providecommand\color[2][]{%
    \errmessage{(Inkscape) Color is used for the text in Inkscape, but the package 'color.sty' is not loaded}%
    \renewcommand\color[2][]{}%
  }%
  \providecommand\transparent[1]{%
    \errmessage{(Inkscape) Transparency is used (non-zero) for the text in Inkscape, but the package 'transparent.sty' is not loaded}%
    \renewcommand\transparent[1]{}%
  }%
  \providecommand\rotatebox[2]{#2}%
  \newcommand*\fsize{\dimexpr\f@size pt\relax}%
  \newcommand*\lineheight[1]{\fontsize{\fsize}{#1\fsize}\selectfont}%
  \ifx\svgwidth\undefined%
    \setlength{\unitlength}{288.33682677bp}%
    \ifx\svgscale\undefined%
      \relax%
    \else%
      \setlength{\unitlength}{\unitlength * \real{\svgscale}}%
    \fi%
  \else%
    \setlength{\unitlength}{\svgwidth}%
  \fi%
  \global\let\svgwidth\undefined%
  \global\let\svgscale\undefined%
  \makeatother%
  \begin{picture}(1,0.2032724)%
    \lineheight{1}%
    \setlength\tabcolsep{0pt}%
    \put(-0.06263306,-0.21421016){\color[rgb]{0,0,0}\makebox(0,0)[lt]{\begin{minipage}{0.04860934\unitlength}\raggedright \end{minipage}}}%
    \put(0.19588163,0.04006598){\color[rgb]{0,0,0}\makebox(0,0)[lt]{\begin{minipage}{0.08843824\unitlength}\end{minipage}}}%
    \put(0,0){\includegraphics[width=\unitlength,page=1]{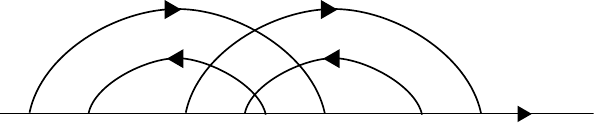}}%
    \put(0.27357837,0.14442178){\color[rgb]{0,0,0}\makebox(0,0)[lt]{\lineheight{1.25}\smash{\begin{tabular}[t]{l}$+$\end{tabular}}}}%
    \put(0.27357837,0.0663891){\color[rgb]{0,0,0}\makebox(0,0)[lt]{\lineheight{1.25}\smash{\begin{tabular}[t]{l}$+$\end{tabular}}}}%
    \put(0.533691,0.14442178){\color[rgb]{0,0,0}\makebox(0,0)[lt]{\lineheight{1.25}\smash{\begin{tabular}[t]{l}$-$\end{tabular}}}}%
    \put(0.533691,0.0663891){\color[rgb]{0,0,0}\makebox(0,0)[lt]{\lineheight{1.25}\smash{\begin{tabular}[t]{l}$-$\end{tabular}}}}%
  \end{picture}%
\endgroup%

\caption{A Gauss diagram representing the long figure 8 long knot.}
\label{fig:4_1-gd}
\end{figure}

A (long) {\em arrow diagram} $A$ is a general unsigned graph which is a copy of oriented $\R$ (base strand) with an arbitrary collection of {\em oriented chords} also called {\em arrows}, having distinct endpoints marked with $\circ$, common endpoints marked with $\bullet$, or unmarked endpoints: $\perp$. The unmarked endpoints are allowed to be either distinct or colliding. These properties are analogous to the Gauss diagrams  $G_K$,  except no signs of crossings attached to the chords. 
The arrow diagrams and Gauss diagrams can be paired as follows; for  a  Gauss diagram $G=G_K$ of a long knot $K$, an {\em{embedding}} of an  arrow diagram $A$ in $G$ is a graph embedding of $A$ into $G$ mapping the base strand $\mathbb{R}$ of $A$ to the base strand of $G$, and chords to chords, preserving orientations and the above conventions about the markings of the endpoints: $\circ$, $\bullet$, $\perp$.  Further,  the {\em{sign}} of an embedding $\phi : A \longrightarrow G$  is given by\footnote{chords of $A$ will be denoted by the greek letters: $\alpha$, $\beta$,\ldots , and chords of $G$ by lowercase letters: $g$, $h$,\ldots}
\begin{equation}\label{eq:sign_representation}
\operatorname{sign}(\phi) = \prod_{\alpha \in A} \operatorname{sign}(\phi (\alpha)),
\end{equation}
\no where the $\operatorname{sign}(\phi(\alpha))$ is a sign of the arrow $g=\phi(\alpha)$ in $G$. Further, 
\begin{equation}\label{eq:<A,G>}
\langle A, G\rangle = \sum_{\phi: A \rightarrow G} \operatorname{sign}(\phi),
\end{equation}
\no is a sum taken over all embeddings $\phi: A \longrightarrow G$ of $A$ in $G$. 
A formal sum of arrow diagrams $P=\sum_i c_i A_i$ with integer coefficients is known\footnote{in the standard double crossing case} as an {\em arrow polynomial} \cite{Polyak-Viro:1994}, and $\langle P,G\rangle$ is defined from \eqref{eq:sign_representation} and \eqref{eq:<A,G>} by the linear extension. Also note that using the endpoints convention, we may express the arrow diagrams with unmarked endpoints in terms of the marked endpoint diagrams, for instance 
\begin{equation}\label{eq:ubmarked-to-marked}
\vvcenteredinclude{.3}{gd_c2_chord_unmarked.pdf}=
 \vvcenteredinclude{.3}{gd_c2_chord.pdf}+\vvcenteredinclude{.3}{gd_c2_chord-left.pdf}
 +\vvcenteredinclude{.3}{gd_c2_chord-mid.pdf}+\vvcenteredinclude{.3}{gd_c2_chord-right.pdf}.
\end{equation}

In a slightly different setting of {\em signed arrow diagrams}, the theorem of Goussarov \cite{Goussarov-Polyak-Viro:2000} shows that any finite type invariant $v$ of knots can be expressed  as $\langle P_v,\,\cdot\,\rangle$ for a suitable choice of a signed arrow polynomial $P_v$. Arrow polynomials of some low degree invariants have been computed in \cite{Ostlund:2004, Polyak-Viro:1994, Polyak-Viro:2001, Willerton:2002}. Apart from low degree examples, the arrow diagram formulae are known for: the coefficients of the Conway \cite{Chmutov-Khoury-Rossi:2009} and the HOMFLY-PT polynomials, \cite{Chmutov-Polyak:2010}, and the Milnor linking numbers \cite{Komendarczyk-Michaelides:2020,Kravchenko-Polyak:2011}, see \cite{Chmutov-Duzhin-Mostovoy:2012} for further information. All formulas obtained in these works assume that the Gauss diagrams of knots are regular double crossing diagrams.

\begin{center}
	\begin{table}[h!]	
		\caption{Transforming a regular double--crossing projection into a multicrossing projection. All unmarked chords have a positive sign. The chords marked in red correspond to chords that contribute to the $c_2$ count.  Note that in figure $(f)$, the triple crossing between the $1^{st}, 3^{rd}$, and $4^{th}$ strand creates two arrow subdiagrams which cancel each other out, as is depicted in row 2 of Table \ref{tb:triplecrossing}. 
	 }
		\begin{tabular}{|ccc|}
			\hline
			& Long Knot Diagram & Long Gauss diagram\\ \hline
			(a) & \scalebox{.7}{
\begingroup%
  \makeatletter%
  \providecommand\color[2][]{%
    \errmessage{(Inkscape) Color is used for the text in Inkscape, but the package 'color.sty' is not loaded}%
    \renewcommand\color[2][]{}%
  }%
  \providecommand\transparent[1]{%
    \errmessage{(Inkscape) Transparency is used (non-zero) for the text in Inkscape, but the package 'transparent.sty' is not loaded}%
    \renewcommand\transparent[1]{}%
  }%
  \providecommand\rotatebox[2]{#2}%
  \newcommand*\fsize{\dimexpr\f@size pt\relax}%
  \newcommand*\lineheight[1]{\fontsize{\fsize}{#1\fsize}\selectfont}%
  \ifx\svgwidth\undefined%
    \setlength{\unitlength}{243.8397361bp}%
    \ifx\svgscale\undefined%
      \relax%
    \else%
      \setlength{\unitlength}{\unitlength * \real{\svgscale}}%
    \fi%
  \else%
    \setlength{\unitlength}{\svgwidth}%
  \fi%
  \global\let\svgwidth\undefined%
  \global\let\svgscale\undefined%
  \makeatother%
  \begin{picture}(1,0.45299999)%
    \lineheight{1}%
    \setlength\tabcolsep{0pt}%
    \put(0,0){\includegraphics[width=\unitlength,page=1]{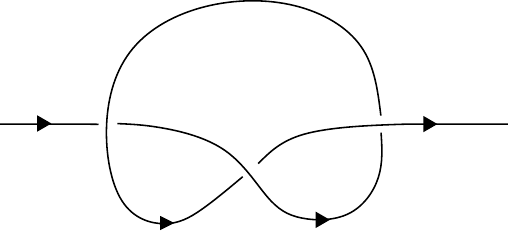}}%
    \put(0.1207542,-0.04198462){\color[rgb]{0,0,0}\makebox(0,0)[lt]{\begin{minipage}{0.05747982\unitlength}\raggedright \end{minipage}}}%
    \put(0.42644394,0.25869307){\color[rgb]{0,0,0}\makebox(0,0)[lt]{\begin{minipage}{0.10457688\unitlength}\end{minipage}}}%
  \end{picture}%
\endgroup%
} & \scalebox{.7}{
\begingroup%
  \makeatletter%
  \providecommand\color[2][]{%
    \errmessage{(Inkscape) Color is used for the text in Inkscape, but the package 'color.sty' is not loaded}%
    \renewcommand\color[2][]{}%
  }%
  \providecommand\transparent[1]{%
    \errmessage{(Inkscape) Transparency is used (non-zero) for the text in Inkscape, but the package 'transparent.sty' is not loaded}%
    \renewcommand\transparent[1]{}%
  }%
  \providecommand\rotatebox[2]{#2}%
  \newcommand*\fsize{\dimexpr\f@size pt\relax}%
  \newcommand*\lineheight[1]{\fontsize{\fsize}{#1\fsize}\selectfont}%
  \ifx\svgwidth\undefined%
    \setlength{\unitlength}{241.83682583bp}%
    \ifx\svgscale\undefined%
      \relax%
    \else%
      \setlength{\unitlength}{\unitlength * \real{\svgscale}}%
    \fi%
  \else%
    \setlength{\unitlength}{\svgwidth}%
  \fi%
  \global\let\svgwidth\undefined%
  \global\let\svgscale\undefined%
  \makeatother%
  \begin{picture}(1,0.20032187)%
    \lineheight{1}%
    \setlength\tabcolsep{0pt}%
    \put(-0.26695444,-0.25539813){\color[rgb]{0,0,0}\makebox(0,0)[lt]{\begin{minipage}{0.05795588\unitlength}\raggedright \end{minipage}}}%
    \put(0.04126704,0.0477698){\color[rgb]{0,0,0}\makebox(0,0)[lt]{\begin{minipage}{0.105443\unitlength}\end{minipage}}}%
    \put(0,0){\includegraphics[width=\unitlength,page=1]{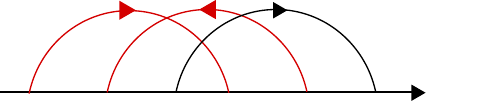}}%
  \end{picture}%
\endgroup%
}\\ 
			(b) & \scalebox{.7}{
\begingroup%
  \makeatletter%
  \providecommand\color[2][]{%
    \errmessage{(Inkscape) Color is used for the text in Inkscape, but the package 'color.sty' is not loaded}%
    \renewcommand\color[2][]{}%
  }%
  \providecommand\transparent[1]{%
    \errmessage{(Inkscape) Transparency is used (non-zero) for the text in Inkscape, but the package 'transparent.sty' is not loaded}%
    \renewcommand\transparent[1]{}%
  }%
  \providecommand\rotatebox[2]{#2}%
  \newcommand*\fsize{\dimexpr\f@size pt\relax}%
  \newcommand*\lineheight[1]{\fontsize{\fsize}{#1\fsize}\selectfont}%
  \ifx\svgwidth\undefined%
    \setlength{\unitlength}{243.8397361bp}%
    \ifx\svgscale\undefined%
      \relax%
    \else%
      \setlength{\unitlength}{\unitlength * \real{\svgscale}}%
    \fi%
  \else%
    \setlength{\unitlength}{\svgwidth}%
  \fi%
  \global\let\svgwidth\undefined%
  \global\let\svgscale\undefined%
  \makeatother%
  \begin{picture}(1,0.46055012)%
    \lineheight{1}%
    \setlength\tabcolsep{0pt}%
    \put(0,0){\includegraphics[width=\unitlength,page=1]{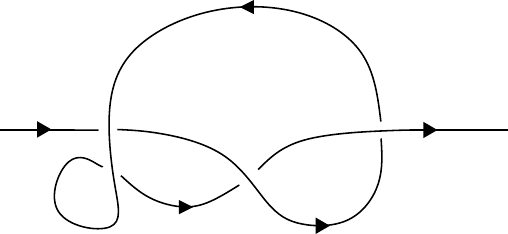}}%
    \put(0.1207542,-0.04609964){\color[rgb]{0,0,0}\makebox(0,0)[lt]{\begin{minipage}{0.05747982\unitlength}\raggedright \end{minipage}}}%
    \put(0.42644394,0.25457805){\color[rgb]{0,0,0}\makebox(0,0)[lt]{\begin{minipage}{0.10457688\unitlength}\end{minipage}}}%
  \end{picture}%
\endgroup%
} & \scalebox{.7}{
\begingroup%
  \makeatletter%
  \providecommand\color[2][]{%
    \errmessage{(Inkscape) Color is used for the text in Inkscape, but the package 'color.sty' is not loaded}%
    \renewcommand\color[2][]{}%
  }%
  \providecommand\transparent[1]{%
    \errmessage{(Inkscape) Transparency is used (non-zero) for the text in Inkscape, but the package 'transparent.sty' is not loaded}%
    \renewcommand\transparent[1]{}%
  }%
  \providecommand\rotatebox[2]{#2}%
  \newcommand*\fsize{\dimexpr\f@size pt\relax}%
  \newcommand*\lineheight[1]{\fontsize{\fsize}{#1\fsize}\selectfont}%
  \ifx\svgwidth\undefined%
    \setlength{\unitlength}{241.83682583bp}%
    \ifx\svgscale\undefined%
      \relax%
    \else%
      \setlength{\unitlength}{\unitlength * \real{\svgscale}}%
    \fi%
  \else%
    \setlength{\unitlength}{\svgwidth}%
  \fi%
  \global\let\svgwidth\undefined%
  \global\let\svgscale\undefined%
  \makeatother%
  \begin{picture}(1,0.20032187)%
    \lineheight{1}%
    \setlength\tabcolsep{0pt}%
    \put(-0.26695444,-0.25539813){\color[rgb]{0,0,0}\makebox(0,0)[lt]{\begin{minipage}{0.05795588\unitlength}\raggedright \end{minipage}}}%
    \put(0.04126704,0.0477698){\color[rgb]{0,0,0}\makebox(0,0)[lt]{\begin{minipage}{0.105443\unitlength}\end{minipage}}}%
    \put(0,0){\includegraphics[width=\unitlength,page=1]{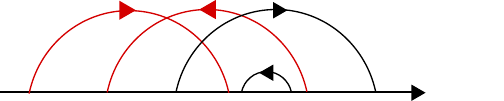}}%
  \end{picture}%
\endgroup%
}\\ 
			(c) &  \scalebox{.7}{
\begingroup%
  \makeatletter%
  \providecommand\color[2][]{%
    \errmessage{(Inkscape) Color is used for the text in Inkscape, but the package 'color.sty' is not loaded}%
    \renewcommand\color[2][]{}%
  }%
  \providecommand\transparent[1]{%
    \errmessage{(Inkscape) Transparency is used (non-zero) for the text in Inkscape, but the package 'transparent.sty' is not loaded}%
    \renewcommand\transparent[1]{}%
  }%
  \providecommand\rotatebox[2]{#2}%
  \newcommand*\fsize{\dimexpr\f@size pt\relax}%
  \newcommand*\lineheight[1]{\fontsize{\fsize}{#1\fsize}\selectfont}%
  \ifx\svgwidth\undefined%
    \setlength{\unitlength}{243.8397361bp}%
    \ifx\svgscale\undefined%
      \relax%
    \else%
      \setlength{\unitlength}{\unitlength * \real{\svgscale}}%
    \fi%
  \else%
    \setlength{\unitlength}{\svgwidth}%
  \fi%
  \global\let\svgwidth\undefined%
  \global\let\svgscale\undefined%
  \makeatother%
  \begin{picture}(1,0.46055012)%
    \lineheight{1}%
    \setlength\tabcolsep{0pt}%
    \put(0,0){\includegraphics[width=\unitlength,page=1]{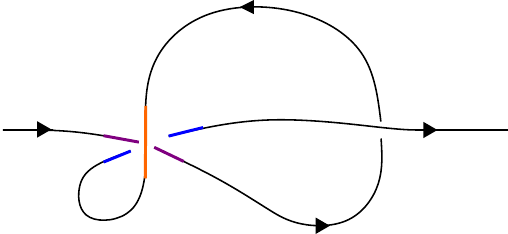}}%
    \put(0.1207542,-0.04609964){\color[rgb]{0,0,0}\makebox(0,0)[lt]{\begin{minipage}{0.05747982\unitlength}\raggedright \end{minipage}}}%
    \put(0.42644394,0.25457805){\color[rgb]{0,0,0}\makebox(0,0)[lt]{\begin{minipage}{0.10457688\unitlength}\end{minipage}}}%
  \end{picture}%
\endgroup%
} & \scalebox{.7}{
\begingroup%
  \makeatletter%
  \providecommand\color[2][]{%
    \errmessage{(Inkscape) Color is used for the text in Inkscape, but the package 'color.sty' is not loaded}%
    \renewcommand\color[2][]{}%
  }%
  \providecommand\transparent[1]{%
    \errmessage{(Inkscape) Transparency is used (non-zero) for the text in Inkscape, but the package 'transparent.sty' is not loaded}%
    \renewcommand\transparent[1]{}%
  }%
  \providecommand\rotatebox[2]{#2}%
  \newcommand*\fsize{\dimexpr\f@size pt\relax}%
  \newcommand*\lineheight[1]{\fontsize{\fsize}{#1\fsize}\selectfont}%
  \ifx\svgwidth\undefined%
    \setlength{\unitlength}{241.83682583bp}%
    \ifx\svgscale\undefined%
      \relax%
    \else%
      \setlength{\unitlength}{\unitlength * \real{\svgscale}}%
    \fi%
  \else%
    \setlength{\unitlength}{\svgwidth}%
  \fi%
  \global\let\svgwidth\undefined%
  \global\let\svgscale\undefined%
  \makeatother%
  \begin{picture}(1,0.23742433)%
    \lineheight{1}%
    \setlength\tabcolsep{0pt}%
    \put(-0.26695444,-0.21947037){\color[rgb]{0,0,0}\makebox(0,0)[lt]{\begin{minipage}{0.05795588\unitlength}\raggedright \end{minipage}}}%
    \put(0.04126704,0.08369755){\color[rgb]{0,0,0}\makebox(0,0)[lt]{\begin{minipage}{0.105443\unitlength}\end{minipage}}}%
    \put(0,0){\includegraphics[width=\unitlength,page=1]{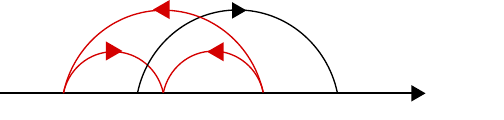}}%
    \put(0.11585672,0.00801767){\color[rgb]{0.50196078,0,0.50196078}\makebox(0,0)[lt]{\lineheight{1.25}\smash{\begin{tabular}[t]{l}$2$\end{tabular}}}}%
    \put(0.31433785,0.00801767){\color[rgb]{1,0.4,0}\makebox(0,0)[lt]{\lineheight{1.25}\smash{\begin{tabular}[t]{l}$1$\end{tabular}}}}%
    \put(0.51281826,0.00801767){\color[rgb]{0,0,1}\makebox(0,0)[lt]{\lineheight{1.25}\smash{\begin{tabular}[t]{l}$3$\end{tabular}}}}%
  \end{picture}%
\endgroup%
}\\ 
			(d) &    \scalebox{.7}{
\begingroup%
  \makeatletter%
  \providecommand\color[2][]{%
    \errmessage{(Inkscape) Color is used for the text in Inkscape, but the package 'color.sty' is not loaded}%
    \renewcommand\color[2][]{}%
  }%
  \providecommand\transparent[1]{%
    \errmessage{(Inkscape) Transparency is used (non-zero) for the text in Inkscape, but the package 'transparent.sty' is not loaded}%
    \renewcommand\transparent[1]{}%
  }%
  \providecommand\rotatebox[2]{#2}%
  \newcommand*\fsize{\dimexpr\f@size pt\relax}%
  \newcommand*\lineheight[1]{\fontsize{\fsize}{#1\fsize}\selectfont}%
  \ifx\svgwidth\undefined%
    \setlength{\unitlength}{243.8397361bp}%
    \ifx\svgscale\undefined%
      \relax%
    \else%
      \setlength{\unitlength}{\unitlength * \real{\svgscale}}%
    \fi%
  \else%
    \setlength{\unitlength}{\svgwidth}%
  \fi%
  \global\let\svgwidth\undefined%
  \global\let\svgscale\undefined%
  \makeatother%
  \begin{picture}(1,0.46055012)%
    \lineheight{1}%
    \setlength\tabcolsep{0pt}%
    \put(0,0){\includegraphics[width=\unitlength,page=1]{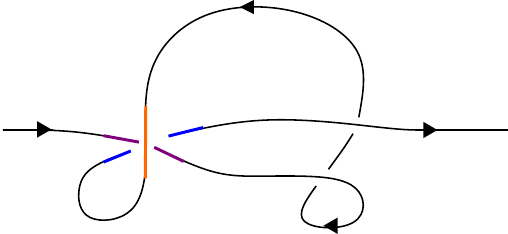}}%
    \put(0.1207542,-0.04609964){\color[rgb]{0,0,0}\makebox(0,0)[lt]{\begin{minipage}{0.05747982\unitlength}\raggedright \end{minipage}}}%
    \put(0.42644394,0.25457805){\color[rgb]{0,0,0}\makebox(0,0)[lt]{\begin{minipage}{0.10457688\unitlength}\end{minipage}}}%
  \end{picture}%
\endgroup%
} & \scalebox{.7}{
\begingroup%
  \makeatletter%
  \providecommand\color[2][]{%
    \errmessage{(Inkscape) Color is used for the text in Inkscape, but the package 'color.sty' is not loaded}%
    \renewcommand\color[2][]{}%
  }%
  \providecommand\transparent[1]{%
    \errmessage{(Inkscape) Transparency is used (non-zero) for the text in Inkscape, but the package 'transparent.sty' is not loaded}%
    \renewcommand\transparent[1]{}%
  }%
  \providecommand\rotatebox[2]{#2}%
  \newcommand*\fsize{\dimexpr\f@size pt\relax}%
  \newcommand*\lineheight[1]{\fontsize{\fsize}{#1\fsize}\selectfont}%
  \ifx\svgwidth\undefined%
    \setlength{\unitlength}{241.83682583bp}%
    \ifx\svgscale\undefined%
      \relax%
    \else%
      \setlength{\unitlength}{\unitlength * \real{\svgscale}}%
    \fi%
  \else%
    \setlength{\unitlength}{\svgwidth}%
  \fi%
  \global\let\svgwidth\undefined%
  \global\let\svgscale\undefined%
  \makeatother%
  \begin{picture}(1,0.23742433)%
    \lineheight{1}%
    \setlength\tabcolsep{0pt}%
    \put(-0.26695444,-0.21947037){\color[rgb]{0,0,0}\makebox(0,0)[lt]{\begin{minipage}{0.05795588\unitlength}\raggedright \end{minipage}}}%
    \put(0.04126704,0.08369755){\color[rgb]{0,0,0}\makebox(0,0)[lt]{\begin{minipage}{0.105443\unitlength}\end{minipage}}}%
    \put(0,0){\includegraphics[width=\unitlength,page=1]{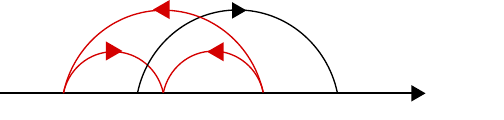}}%
    \put(0.11585672,0.00801767){\color[rgb]{0.50196078,0,0.50196078}\makebox(0,0)[lt]{\lineheight{1.25}\smash{\begin{tabular}[t]{l}$2$\end{tabular}}}}%
    \put(0.31433785,0.00801767){\color[rgb]{1,0.4,0}\makebox(0,0)[lt]{\lineheight{1.25}\smash{\begin{tabular}[t]{l}$1$\end{tabular}}}}%
    \put(0.51281826,0.00801767){\color[rgb]{0,0,1}\makebox(0,0)[lt]{\lineheight{1.25}\smash{\begin{tabular}[t]{l}$3$\end{tabular}}}}%
    \put(0,0){\includegraphics[width=\unitlength,page=2]{long-trefoil-gd4-nolabels.pdf}}%
  \end{picture}%
\endgroup%
}\\ 
			(e) &    \scalebox{.7}{
\begingroup%
  \makeatletter%
  \providecommand\color[2][]{%
    \errmessage{(Inkscape) Color is used for the text in Inkscape, but the package 'color.sty' is not loaded}%
    \renewcommand\color[2][]{}%
  }%
  \providecommand\transparent[1]{%
    \errmessage{(Inkscape) Transparency is used (non-zero) for the text in Inkscape, but the package 'transparent.sty' is not loaded}%
    \renewcommand\transparent[1]{}%
  }%
  \providecommand\rotatebox[2]{#2}%
  \newcommand*\fsize{\dimexpr\f@size pt\relax}%
  \newcommand*\lineheight[1]{\fontsize{\fsize}{#1\fsize}\selectfont}%
  \ifx\svgwidth\undefined%
    \setlength{\unitlength}{243.8397361bp}%
    \ifx\svgscale\undefined%
      \relax%
    \else%
      \setlength{\unitlength}{\unitlength * \real{\svgscale}}%
    \fi%
  \else%
    \setlength{\unitlength}{\svgwidth}%
  \fi%
  \global\let\svgwidth\undefined%
  \global\let\svgscale\undefined%
  \makeatother%
  \begin{picture}(1,0.46055012)%
    \lineheight{1}%
    \setlength\tabcolsep{0pt}%
    \put(0,0){\includegraphics[width=\unitlength,page=1]{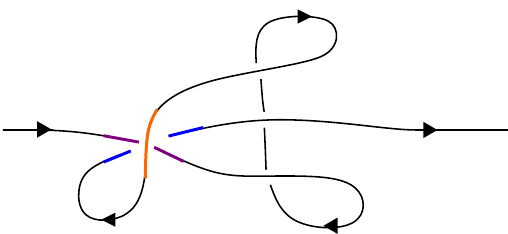}}%
    \put(0.1207542,-0.04609964){\color[rgb]{0,0,0}\makebox(0,0)[lt]{\begin{minipage}{0.05747982\unitlength}\raggedright \end{minipage}}}%
    \put(0.42644394,0.25457805){\color[rgb]{0,0,0}\makebox(0,0)[lt]{\begin{minipage}{0.10457688\unitlength}\end{minipage}}}%
  \end{picture}%
\endgroup%
} & \scalebox{.7}{
\begingroup%
  \makeatletter%
  \providecommand\color[2][]{%
    \errmessage{(Inkscape) Color is used for the text in Inkscape, but the package 'color.sty' is not loaded}%
    \renewcommand\color[2][]{}%
  }%
  \providecommand\transparent[1]{%
    \errmessage{(Inkscape) Transparency is used (non-zero) for the text in Inkscape, but the package 'transparent.sty' is not loaded}%
    \renewcommand\transparent[1]{}%
  }%
  \providecommand\rotatebox[2]{#2}%
  \newcommand*\fsize{\dimexpr\f@size pt\relax}%
  \newcommand*\lineheight[1]{\fontsize{\fsize}{#1\fsize}\selectfont}%
  \ifx\svgwidth\undefined%
    \setlength{\unitlength}{241.83682583bp}%
    \ifx\svgscale\undefined%
      \relax%
    \else%
      \setlength{\unitlength}{\unitlength * \real{\svgscale}}%
    \fi%
  \else%
    \setlength{\unitlength}{\svgwidth}%
  \fi%
  \global\let\svgwidth\undefined%
  \global\let\svgscale\undefined%
  \makeatother%
  \begin{picture}(1,0.23742433)%
    \lineheight{1}%
    \setlength\tabcolsep{0pt}%
    \put(-0.26695444,-0.21947037){\color[rgb]{0,0,0}\makebox(0,0)[lt]{\begin{minipage}{0.05795588\unitlength}\raggedright \end{minipage}}}%
    \put(0.04126704,0.08369755){\color[rgb]{0,0,0}\makebox(0,0)[lt]{\begin{minipage}{0.105443\unitlength}\end{minipage}}}%
    \put(0,0){\includegraphics[width=\unitlength,page=1]{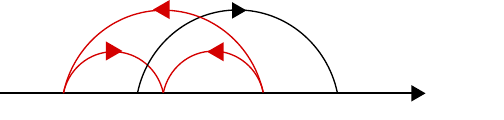}}%
    \put(0.11585672,0.00801767){\color[rgb]{0.50196078,0,0.50196078}\makebox(0,0)[lt]{\lineheight{1.25}\smash{\begin{tabular}[t]{l}$2$\end{tabular}}}}%
    \put(0.31433785,0.00801767){\color[rgb]{1,0.4,0}\makebox(0,0)[lt]{\lineheight{1.25}\smash{\begin{tabular}[t]{l}$1$\end{tabular}}}}%
    \put(0.51281826,0.00801767){\color[rgb]{0,0,1}\makebox(0,0)[lt]{\lineheight{1.25}\smash{\begin{tabular}[t]{l}$3$\end{tabular}}}}%
    \put(0,0){\includegraphics[width=\unitlength,page=2]{long-trefoil-gd5-nolabels.pdf}}%
    \put(0.40737596,0.05763723){\color[rgb]{0,0,0}\makebox(0,0)[lt]{\lineheight{1.25}\smash{\begin{tabular}[t]{l}$-$\end{tabular}}}}%
  \end{picture}%
\endgroup%
}\\ 
			(f) &      \scalebox{.7}{
\begingroup%
  \makeatletter%
  \providecommand\color[2][]{%
    \errmessage{(Inkscape) Color is used for the text in Inkscape, but the package 'color.sty' is not loaded}%
    \renewcommand\color[2][]{}%
  }%
  \providecommand\transparent[1]{%
    \errmessage{(Inkscape) Transparency is used (non-zero) for the text in Inkscape, but the package 'transparent.sty' is not loaded}%
    \renewcommand\transparent[1]{}%
  }%
  \providecommand\rotatebox[2]{#2}%
  \newcommand*\fsize{\dimexpr\f@size pt\relax}%
  \newcommand*\lineheight[1]{\fontsize{\fsize}{#1\fsize}\selectfont}%
  \ifx\svgwidth\undefined%
    \setlength{\unitlength}{238.98047244bp}%
    \ifx\svgscale\undefined%
      \relax%
    \else%
      \setlength{\unitlength}{\unitlength * \real{\svgscale}}%
    \fi%
  \else%
    \setlength{\unitlength}{\svgwidth}%
  \fi%
  \global\let\svgwidth\undefined%
  \global\let\svgscale\undefined%
  \makeatother%
  \begin{picture}(1,0.46992325)%
    \lineheight{1}%
    \setlength\tabcolsep{0pt}%
    \put(0,0){\includegraphics[width=\unitlength,page=1]{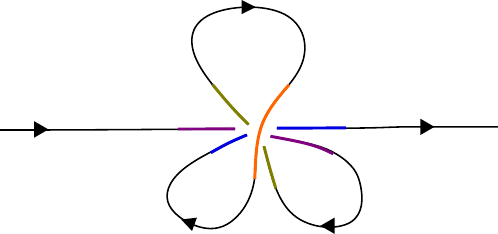}}%
    \put(0.11720302,-0.5491646){\color[rgb]{0,0,0}\makebox(0,0)[lt]{\begin{minipage}{0.05864858\unitlength}\raggedright \end{minipage}}}%
    \put(0.42910845,-0.24237314){\color[rgb]{0,0,0}\makebox(0,0)[lt]{\begin{minipage}{0.10670328\unitlength}\end{minipage}}}%
  \end{picture}%
\endgroup%
} & \scalebox{.7}{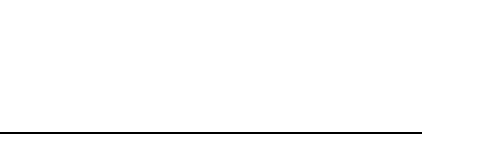}\\ \hline
		\end{tabular}
		\label{tab:petalexample}
	\end{table}
\end{center}

\begin{example}\label{eq:trefoil-computation}
	Table \ref{tab:petalexample} shows various projections of the long trefoil knot and the corresponding multiple--crossing Gauss diagrams: $G_a$, $G_b$, $G_c$, $G_d$, $G_e$, and $G_f$, which represent moves transforming a regular double--crossing projection into a multi-crossing projection.  Using the above definitions, we may calculate various pairings of arrow diagrams and Gauss diagrams. Since $G_a$ is a regular double crossing diagram \eqref{eq:gd_c2_dbl-crossings} yields
	\[
	c_2(4_1)=\langle \vvcenteredinclude{.3}{gd_c2_chord.pdf},G_a\rangle=\langle \vvcenteredinclude{.3}{gd_c2_chord_unmarked.pdf},G_a\rangle=1.
	\] 
	However, the formula in  \eqref{eq:gd_c2_dbl-crossings} does not extend to the multi-crossing Gauss diagrams, for instance
	\[
	\begin{split}
	 \langle \vvcenteredinclude{.3}{gd_c2_chord.pdf},G_c\rangle & =0,\quad \langle \vvcenteredinclude{.3}{gd_c2_chord_unmarked.pdf},G_c\rangle=2,\\
	 \langle \vvcenteredinclude{.3}{gd_c2_chord.pdf},G_f\rangle & =0,\quad \langle \vvcenteredinclude{.3}{gd_c2_chord_unmarked.pdf},G_f\rangle=2.
	\end{split}
	\]
	Formula \eqref{eq:gd_c2_mult-crossings} yields (for the multi-crossing Gauss diagram $G_e$)
	\[
		\begin{split}
			c_2(K) & =\langle \vvcenteredinclude{.3}{gd_c2_chord.pdf},G_f\rangle+ \tfrac{1}{2}\bigl(\langle \vvcenteredinclude{.3}{gd_c2_chord-left.pdf},G_f\rangle +\langle \vvcenteredinclude{.3}{gd_c2_chord-mid.pdf},G_f\rangle\\
			& +\langle \vvcenteredinclude{.3}{gd_c2_chord-right.pdf},G_f\rangle\bigr) = 0+\tfrac{1}{2}\bigl(1+(1-1)+1\bigr)=1.
		\end{split}
	\]
\end{example}

\subsection{Connections to the existing work} In \cite{Polyak-Viro:2001}, Polyak and Viro  
equate the combinatorial formula \eqref{eq:gd_c2_dbl-crossings}, first proposed in \cite{Polyak-Viro:1994}, with a degree $\deg(h)$ of the map $h=h_{13}\times h_{42}\times\operatorname{id}$. In order to properly define this degree they extend the compactified domain\footnote{an $S^2$ factor is needed to make the boundary of $\Cnf[I;4]\times S^2$ compatible with the boundary of $\Cnf[I,\K_{S^1};3,1]$} $\Cnf[I;4]\times S^2$ of $h$ in \eqref{eq:gd_c2_dbl-crossings} by gluing $\Cnf[I,\K_{S^1};3,1]$ along the principal faces, and identifying hidden faces via Kontsevitch's involutions to obtain a $6$ dimensional cell complex $\mathcal{C}$. 
Since $\mathcal{C}$ has a well defined fundamental class, $\deg(h)$ can be defined homologically as the pullback of the fundametal class of the codomain $(S^2)^3$ 
\begin{equation}\label{eq:c_2=deg(h)}
c_2(K)=\deg(h)=\langle h_\ast([\mathcal{C}]),[(S^2)^3]\rangle.
\end{equation}
Whether we can perform this computation by integrating a differential form is a different issue because $\mathcal{C}$ is not a closed manifold. In \cite{Bott-Taubes:1994}, Bott and Taubes use the symmetric Gauss form $\omega$ on each $S^2$ factor to obtain the formula \eqref{eq:casson-integral-original}, we review the details of this calculation for long knots in Section \ref{sec:bott-taubes} and formula \eqref{eq:casson-integral-eta-pi_ast}.
 If $K$ has a regular double crossing projection (onto a plane orthogonal to $N$), $(N,N,N)$ is a regular value of $h$ and the local degree formula yields the arrow diagram count\footnote{for sufficiently small $\varepsilon$}: $c_2(K)=\langle \vvcenteredinclude{.25}{gd_c2_chord.pdf},G_K\rangle$, which we also obtain from
 our integral \eqref{eq:casson-integral-eta} in Section \ref{sec:arrow-counting-dbl-crossing} by showing that the contribution of the second term in \eqref{eq:casson-integral-eta} vanishes. 
 
In contrast to this ``cut and paste'' approach, we construct the localized\footnote{we use this terminology following \cite[Proposition 6.25]{Bott-Tu:1982}} integral \eqref{eq:casson-integral-eta} directly using the Bott--Taubes integration technique \cite{Bott-Taubes:1994, Volic:2007, Thurston:1999}, with the form $\eta_N$, \eqref{eq:eta_N}. Since $\eta_N$ is not antisymmetric, we cannot use the Kontsevitch's involutions for proving vanishing over the hidden and anomalous faces. As a consequence, we obtain the invariant defined over a subspace $\K_{N}$ \eqref{eq:K_N}, and showing the invariance over the whole knot space $\K$ reduces to the invariance under {\bf RI} move (Theorem \ref{thm:c2-integral} $(iii)$). Further, note that the local degree argument generally fails for knots which do not have regular double crossing projection in the direction of $N$, thus $\varint\limits_{\Cnf(I;4)} h^\ast_{13}\eta_N\wedge h^\ast_{42}\eta_N$ will not give a correct value for $c_2(K)$, 
while the formula \eqref{eq:casson-integral-eta} works for any knot in $\K_{N}$, in particular for the multicrossing knot diagrams it yields \eqref{eq:gd_c2_mult-crossings}.

The techniques outlined above can be regarded as the discretization of the Bott and Taubes  configuration space integral \eqref{eq:casson-integral-original}.
In \cite{Lin-Wang:1996}, Lin and Wang approached this discretization in a different way; rather than replacing the form $\omega$ with $\eta_N$, they adjust the parametrization of the knot $K$ to lie almost entirely in the plane orthogonal to $N$ except $\varepsilon$--small overpasses (assuming $K$ admits a regular projection onto that plane), denoted by $K_\varepsilon$. Then, they examined the limit of the integral \eqref{eq:casson-integral-original} over $K_\varepsilon$ as $\varepsilon\to 0$. Although technically similar, this limit does not yield the local degree expression
in \eqref{eq:gd_c2_dbl-crossings}. In particular the second term $\varint\limits_{\Cnf(I,K;3,1)} \omega_{1\mathbf{4}}\wedge\omega_{2\mathbf{4}}\wedge\omega_{3\mathbf{4}}$ (the ``tripod'' integral)
in \eqref{eq:casson-integral-original} does not vanish in the limit.  The first term $\frac{1}{4}\varint\limits_{\Cnf(I;4)} \omega_{13}\wedge\omega_{24}$ gives a signed crossing count in the limit, reminiscent\footnote{where the all arrow directions are allowed in the arrow diagram} of \eqref{eq:gd_c2_dbl-crossings}, plus a term proportional to the crossing number $\operatorname{Cr}(K)$ of $K$. We expect that computations of Lin and Wang can be also obtained in our setting, by using the averaged from $\eta_{Av}=\tfrac{1}{2}(\eta_N+\eta_S)$ in place of $\omega$. One advantage of $\eta_{Av}$ is that it is antisymmetric, thus we could use the involutions of \cite{Kontsevich:1993} to show vanishing over certain hidden faces, and apply our technique (Section \ref{sec:vanishing-faces-4}) to show vanishing over the anomalous face. On the other hand, nonvanishing of the tripod integral term in the limit prevents us from a straightforwad identification of the invariant with the general arrow diagram formula in \eqref{eq:<A,G>}.  One way to address this is to consider the more general setting of {\em tinkertoy diagrams} of Thurston \cite{Thurston:1999}, this however results in counting features\footnote{such as similar triangles in the knot projection} of the knot projection not tied entirely to the count of signed crossings, see e.g. \cite{Polyak-Viro:2001}. Other approaches to the calculation of $c_2(K)$ in terms certain intersection numbers have been developed in the work of Budney et al. \cite{Budney-Conant-Scannell:2005}.

In summary, the techniques investigated in this work, and presented in the case of $c_2(K)$,  show that some complications of the original Bott--Taubes integration procedure (such as vanishing over the anomalous face) can be simplified when working with localized forms, which may in turn provide a correct setting for the diagram cochain complex for knots  (c.f. \cite{Cattaneo-Cotta-Ramusino-Longoni:2002, Koytcheff-Munson-Volic:2013}) in the classical dimension $n=3$.
It may also be a proper setting to prove the extension of the Goussarov's result in \cite{Goussarov-Polyak-Viro:2000}, to unsigned arrow diagrams in \eqref{eq:<A,G>}. We plan to invesigate these questions in the future work.

\subsection{Organization of the paper} The paper is organized as follows. In Section \ref{sec:bott-taubes} we review the Bott--Taubes technique and formulate the integral \eqref{eq:casson-integral-original} in terms of the pushforward operator. The proof of Theorem \ref{thm:c2-integral} is given in Section \ref{sec:proof-A}, where in the first part we follow the Bott--Taubes technique to prove invariance of the integral \eqref{eq:casson-integral-eta} and in the second part we show \eqref{eq:gd_c2_dbl-crossings} via the local degree formula in the style of \cite{Bott-Tu:1982}. Theorem \ref{thm:c2-multicrossing} is proven, via combinatorial techniques, in Section \ref{sec:arrow-counting-mltpl-crossing}. Finally, in Section \ref{sec:petal-diagrams}, we focus on petal projections and obtain Corollary \ref{cor:uber-crossing} as a consequence.
 
\subsection{Acknowledgments}  This paper builds on the results of the first author's doctoral thesis \cite{Brooks:2020}. The second author acknowledges the partial support by Louisiana Board of Regents
Targeted Enhancement Grant 090ENH-21. Both authors wish to thank the anonymous referee for pointing out critical issues with the first version of the paper and insightful comments in the review.

\section{Review of the Bott--Taubes integration}\label{sec:bott-taubes}

\subsection{Preliminaries on the Stokes theorem for fiber bundles and pushforwards} Consider a smooth fiber bundle $\pi:E\longrightarrow B$ of manifolds $E$ and $B$ of dimensions $m+n$ and $m$, with fiber $F$ compact manifold with boundary of dimension $n$. Let us review the definition of the {\em pushforward} of differential forms  in the style of \cite{Bott-Tu:1982},  
\begin{equation}\label{eq:pi_ast}
 \pi_\ast:\Omega^\ast(E)\longrightarrow \Omega^{\ast-n}(B),\quad \text{also denoted by}\quad \varint_{F}=\pi_\ast.
\end{equation}
It suffices to define $\pi_\ast$ in a local trivialization $U\times W$ of $E$ where $U$ is a neighborhood in $B$, and $W=[0,\varepsilon)\times V\subset F$ is intersecting the boundary or $W=(0,\varepsilon)\times V$ is in the interior of $F$. 
Let $(\mathbf{x},\mathbf{t})=(x_1,\ldots,x_m,t_1,\ldots, t_n)$ be coordinates on $U\times W$ (where either $t_1\geq 0$ or $t_1>0$); a differential form $\beta\in \Omega^{k}(E)$ can be locally expressed as a sum of two types of terms, the type (I) forms which do not contain $d\mathbf{t}=dt_1\wedge\ldots\wedge dt_n$ and type (II) which contain it. The map $\pi_\ast$ is defined by 
\begin{equation}\label{eq:pi_ast-def}
 \pi_\ast \beta=\begin{cases}
 	\text{(I)}\ & \pi^\ast \phi \wedge f(\mathbf{x},\mathbf{t})\, dt_{i_1}\wedge dt_{i_2}\wedge\ldots\wedge dt_{i_r}  \longrightarrow 0, \quad i_1<i_2<\ldots<i_r,\ r<n; \\
 	\text{(II)}\ & \pi^\ast \phi \wedge f(\mathbf{x},\mathbf{t})\, d\mathbf{t}  \longrightarrow \phi \wedge \bigl(\varint_W f(\mathbf{x},\mathbf{t})\bigr)\, d\mathbf{t}, 
 \end{cases}
\end{equation}
where $\phi$ is a compactly supported form on $U\subset B$, $f$ has a compact support on $W$, and $d\mathbf{t}=dt_{1}\wedge dt_{2}\wedge\ldots\wedge dt_{n}$. 
We have the following version of Stokes' Theorem for fiber bundles\footnote{One recovers the classical Stokes' Theorem when $B$ is a point.},
\begin{equation}\label{eq:stokes-fiber}
 d \pi_\ast\beta = \pi_\ast d\beta + (-1)^{\operatorname{deg}(\phi)}(\partial \pi)_\ast \beta,\quad \operatorname{deg}(\phi)=k-n+1,
\end{equation}
where $(\partial \pi)_\ast \beta$ will be called the {\em boundary pushforward}.
For a type (II) form $\beta$, just as in  \cite[p. 62]{Bott-Tu:1982}, we have
\[
d\pi_\ast \beta = \pi_\ast d\beta.
\]
By \eqref{eq:pi_ast-def}, for a type (I) form $\beta=\pi^\ast\phi \wedge f(\mathbf{x},\mathbf{t})\, dt_{i_1}\wedge dt_{i_2}\wedge\ldots\wedge dt_{i_r}$, $\pi_\ast \beta=0$ and thus $d\pi_\ast \beta=0$. Further,
\[
\begin{split}
 \pi_\ast d\beta & =\pi_\ast\bigl((d\pi^\ast\phi) \wedge f(\mathbf{x},\mathbf{t})\, dt_{i_1}\wedge dt_{i_2}\wedge\ldots\wedge dt_{i_r}\bigr)\\
 & +(-1)^{\operatorname{deg}(\phi)}\sum_j \pi_\ast\bigl(\pi^\ast\phi \wedge \frac{\partial f}{\partial t_j}(\mathbf{x},\mathbf{t}) dt_j\wedge dt_{i_1}\wedge dt_{i_2}\wedge\ldots\wedge dt_{i_r}\bigr),
 \end{split}
\]
where the first term vanishes by \eqref{eq:pi_ast-def}, the second term is nonzero only for $r=n-1$, and can be written as
\[
\pi_\ast d\beta =(-1)^{\operatorname{deg}(\phi)}\sum^n_{j=1} \phi \wedge \bigl(\varint_{W}\frac{\partial f}{\partial t_j}(\mathbf{x},\mathbf{t}) dt_j\wedge dt_{1}\wedge \ldots\wedge \widehat{dt_{j}}\wedge\ldots\wedge dt_{n}\bigr).
\]
From the definition of $W$ we can show (similarly as in \cite[p. 62]{Bott-Tu:1982}) the right hand side vanishes for 
$W=(0,\varepsilon)\times V$, (since $\beta$ is compactly supported) and each term in the sum is zero for $j>1$, which leaves only $j=1$ and $W=[0,\varepsilon)\times V$, ($0\leq t_1<\varepsilon$) a possible nonzero contribution. Indeed, taking into account the orientation\footnote{consistent with the {\em outer normal first} convention} of $\partial W=\{0\}\times V$, by $-dt_2\wedge\ldots\wedge dt_{n}$ we obtain
\[
\begin{split}
\pi_\ast d\beta & =(-1)^{\operatorname{deg}(\phi)} \phi \wedge \bigl(\varint_{[0,\varepsilon)\times V}\frac{\partial f}{\partial t_1}(\mathbf{x},\mathbf{t}) dt_{1}\wedge \ldots\wedge dt_{n}\bigr)\\
& = (-1)^{\operatorname{deg}(\phi)} \phi \wedge \varint_{V} \bigl(\bigl(\lim_{t_1\to \varepsilon} f(\mathbf{x},\mathbf{t})\bigr)- f(\mathbf{x},\mathbf{t})\bigl|_{t_1=0}\bigr) dt_2\wedge\ldots\wedge dt_{n}
\end{split}
\]
\[
= -(-1)^{\operatorname{deg}(\phi)} \phi \wedge \varint_{\{0\}\times V} f(\mathbf{x},\mathbf{t}) dt_2\wedge\ldots\wedge dt_{n}=(-1)^{\operatorname{deg}(\phi)} \phi \wedge \varint_{\partial W} f(\mathbf{x},\mathbf{t}) dt_2\wedge\ldots\wedge dt_{n},
\]
(since $f$ is compactly supported on $W$ the limit of $t_1$ at $\varepsilon$ vanishes) and since $\operatorname{deg}(\phi)=\operatorname{deg}(\beta)-n$. Therefore, we obtain the formula \eqref{eq:stokes-fiber} in the Stokes' theorem, if $(\partial \pi_\ast)$ is defined on type (I) and (II) forms as follows $(\partial \pi_\ast)\beta=$
\begin{equation}\label{eq:partial-pi_ast-def}
	\begin{cases}
		\text{(I)} &   \pi^\ast \phi \wedge f(\mathbf{x},\mathbf{t})\, dt_{i_1}\wedge dt_{i_2}\wedge\ldots\wedge dt_{i_r}  \longrightarrow 0,\ r<n-1 \textrm{ or } i_j=1 \textrm{ for some } j; \\
&   \pi^\ast \phi \wedge f(\mathbf{x},\mathbf{t})\, dt_{2}\wedge dt_{3}\wedge\ldots\wedge dt_{n}  \longrightarrow \phi\wedge \varint_{\partial W} f(\mathbf{x},\mathbf{t}) dt_2\wedge\ldots\wedge dt_{n}; \\
	\text{(II)} 	&  \pi^\ast \phi \wedge f(\mathbf{x},\mathbf{t})\, dt_{1}\wedge dt_{2}\wedge\ldots\wedge dt_{n} \longrightarrow 0,
	\end{cases}
\end{equation}
where $\partial W\neq \varnothing$, just for $W=[0,\varepsilon)\times V$, and $\partial W=\{t_1=0\}=\{0\}\times V$. To express $(\partial \pi_\ast)$ in the coordinate free way, let $j:E\bigl|_{\partial F}\longrightarrow E$ be the inclusion of the sub-bundle of $E\bigl|_{\partial F}$ into $E$, whose fiber is $\partial F$ ($\partial F\longrightarrow E\bigl|_{\partial F}\xrightarrow{\pi_\partial} B$), then 
\begin{equation}\label{eq:partial-pi_ast-c-free}
 (\partial \pi_\ast)=(\pi_\partial)_\ast \circ j_\ast\ . 
\end{equation}
 In the calculations we encounter here, the boundary will be partitioned into pieces which possibly overlap as sets of zero measure, i.e. $\partial W=Q_1\cup\ldots\cup Q_m$, therefore restricting the integral in \eqref{eq:partial-pi_ast-def}   
to $Q_i$ yields $(\partial \pi_\ast)\beta\bigl|_{Q_i}$ and 
\begin{equation}\label{eq:bdry-pushforward-faces}
(\partial \pi_\ast)\beta=\sum^m_{i=1} (\partial \pi_\ast)\beta\bigl|_{Q_i}.
\end{equation}
\begin{figure}[h]
	\centering
	\includegraphics[width=0.9\textwidth]{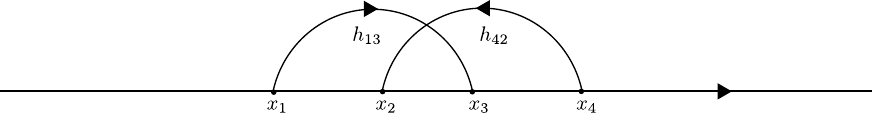}
	\caption{Chord diagram associated to the map $h_4$.}
	\label{fig:gaussdiagramc2}
\end{figure}

\no We have the following basic result about boundary pushforwards which follows immediately from \eqref{eq:partial-pi_ast-c-free}.
\begin{prop}\label{prop:equal-along}
	Suppose $h,h':E,E'\longrightarrow M$ are smooth maps from the total spaces $E$, $E'$  of the bundles $\pi,\pi':E^{n+m}, (E')^{n+m}\longrightarrow B^m$ where a manifold $M$ is of dimension $m+n$. Assuming that the fibers $F$ and $F'$ of $E$ and $E'$ have the common boundary $\partial F'=\partial F$ and $h\bigl|_{\partial F}=h'\bigl|_{\partial F}$ for $\beta\in \Omega^k(M)$, one obtains
	\begin{equation}\label{eq:partial-pi-ast-equal}
	 (\partial \pi_\ast)h^\ast\beta=\pm(\partial \pi_\ast)h'^\ast\beta.
	\end{equation}
	where the sign is taking into account the orientation. 
\end{prop}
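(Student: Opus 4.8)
The plan is to exploit the fact that the boundary pushforward $(\partial\pi_\ast)$ defined in \eqref{eq:partial-pi_ast-def} sees a form only through its restriction to the fiber boundary, and then to invoke naturality of the pullback. Let $\iota:\partial E\hookrightarrow E$ denote the inclusion of the boundary subbundle $\partial\pi:\partial E\longrightarrow B$, which is locally the locus $\{t_1=0\}=B\times\partial F$, and let $\iota':\partial E'\hookrightarrow E'$ be the analogous inclusion. Under the hypothesis $\partial F=\partial F'$ these two boundary bundles are canonically identified, and I will regard $h\bigl|_{\partial F}$ and $h'\bigl|_{\partial F}$ as maps out of this single common boundary bundle.

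The first, and essentially the only substantive, step is to read off from the local formula \eqref{eq:partial-pi_ast-def} that $(\partial\pi_\ast)\gamma$ depends on $\gamma\in\Omega^\ast(E)$ only through the pullback $\iota^\ast\gamma$. Indeed, \eqref{eq:partial-pi_ast-def} sends to zero every type (II) term (those containing $dt_1\wedge\ldots\wedge dt_n$) and every type (I) term in which some $dt_{i_j}=dt_1$, while on the surviving term $\pi^\ast\phi\wedge f\,dt_2\wedge\ldots\wedge dt_n$ it returns $\phi\wedge\varint_{\partial W}f\,dt_2\wedge\ldots\wedge dt_n$, which uses only the value $f\bigl|_{t_1=0}$. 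But this is exactly the data recorded by $\iota^\ast\gamma$: since $t_1$ is constant on $\{t_1=0\}$ we have $\iota^\ast dt_1=0$, so $\iota^\ast$ annihilates every term containing $dt_1$ and restricts the coefficient of the $dt_2\wedge\ldots\wedge dt_n$ component to $t_1=0$. Consequently $(\partial\pi_\ast)\gamma$ is obtained from $\iota^\ast\gamma$ by the ordinary fiberwise integration over $\partial F$; in particular, $\iota^\ast\gamma_1=\iota^\ast\gamma_2$ forces $(\partial\pi_\ast)\gamma_1=(\partial\pi_\ast)\gamma_2$.

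With this factorization the proposition is immediate. By naturality of the pullback, $\iota^\ast h^\ast\beta=(h\circ\iota)^\ast\beta=(h\bigl|_{\partial F})^\ast\beta$ and $\iota'^\ast h'^\ast\beta=(h'\bigl|_{\partial F})^\ast\beta$, and the hypothesis $h\bigl|_{\partial F}=h'\bigl|_{\partial F}$ says precisely that $h\circ\iota=h'\circ\iota'$ as maps on the common boundary bundle, so these two restricted pullbacks coincide as forms on $\partial E$. Feeding the single common form into the fiber integration of the previous step yields $(\partial\pi_\ast)h^\ast\beta=(\partial\pi_\ast)h'^\ast\beta$ up to the identification of $\partial E$ with $\partial E'$.

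I expect the only real obstacle to be orientation bookkeeping, which is exactly what produces the ambiguous sign $\pm$ in \eqref{eq:partial-pi-ast-equal}. The boundary orientation that $\partial F$ inherits from $F$ via the outer-normal convention fixed just after \eqref{eq:partial-pi_ast-def} need not agree with the orientation it inherits as $\partial F'$ from $F'$; when the outer normals of $F$ and $F'$ along $\partial F$ point oppositely, the two induced orientations differ by a global sign, and the reordering of $dt_2\wedge\ldots\wedge dt_n$ required to match them contributes the factor $\pm1$. I would make this precise by comparing the two outer normals fiberwise and tracking the resulting permutation of the fiber coordinates. No analytic difficulty enters at any point, since once the identification $\partial E=\partial E'$ and its orientation are fixed, both sides are literally the same integral over the same manifold.
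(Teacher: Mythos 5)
Your proposal is correct, and it proves the proposition by a cleaner factorization than the paper's own argument, though the underlying mechanism is the same. The paper works entirely in the local coordinates of \eqref{eq:partial-pi_ast-def}: it expands $\beta=\sum_I f_I\,dz_I$, isolates the only surviving type (I) component $(f_I\circ h)\,H_J\,dx_J\wedge dt_2\wedge\ldots\wedge dt_n$, and then argues by hand that $(f_I\circ h)\bigl|_{t_1=0}=(f_I\circ h')\bigl|_{t'_1=0}$ and that the coefficient polynomials $H_J$, $H'_J$ --- built from the partials $\tfrac{\partial}{\partial x_s}h_j$ and $\tfrac{\partial}{\partial t_r}h_j$ with $r\neq 1$ --- agree up to the boundary-orientation sign, because these are derivatives in directions tangent to $\{t_1=0\}$ and $h=h'$ there. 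You instead isolate the key structural fact as a standalone lemma, namely that $(\partial\pi_\ast)$ factors through the restriction $\iota^\ast$ to the boundary subbundle (your verification of this from \eqref{eq:partial-pi_ast-def}, via $\iota^\ast dt_1=0$, is exactly right), and then dispose of the derivative bookkeeping in one stroke by naturality: $\iota^\ast h^\ast\beta=(h\circ\iota)^\ast\beta=(h'\circ\iota')^\ast\beta=\iota'^\ast h'^\ast\beta$. This buys conceptual clarity --- the agreement of all tangential derivatives, which the paper checks explicitly through the polynomials $H_J$, is subsumed in the observation that the pullback along the restricted map depends only on that restriction --- and it makes manifest that the statement is independent of the choice of trivialization. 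What the paper's coordinate computation buys in exchange is a concrete handle on the sign in \eqref{eq:partial-pi-ast-equal}: it appears there as an explicit factor $(-1)^{\operatorname{sign}(\partial h')}$ comparing the induced boundary orientations in the two charts, which is precisely the outer-normal comparison you defer to at the end; your sketch of that comparison is the right way to make your $\pm$ precise, so no gap remains.
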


\subsection{Integrating over the compactified domains}
 In this section we show, in the style of \cite{Bott-Taubes:1994, Volic:2007}, how to express the integrals \eqref{eq:casson-integral-eta}, \eqref{eq:casson-integral-original}  in terms of the pushforward operator $\pi_\ast$.

Recall that we work with the subspace of long knots  $\K_N$ defined in \eqref{eq:K_N} and consider the trivial fiber bundle $\Cnf[\R;4]\times\K_N$ with projection map 
\begin{equation}\label{eq:pi-projection-C_4}
\pi:\Cnf[\R;4]\times \K_N\longrightarrow \K_N
\end{equation}
\noindent and fiber $\pi^{-1}(K)=\Cnf[\R;4]$, given $K\in\K_N$.
Define the evaluation map 
\begin{equation}\label{eq:ev_4}
	\operatorname{ev}_4:  \Cnf[\R;4]\times \K_N\longrightarrow \Cnf[\R^3;4]
\end{equation}
as an extension of the map
\[
 ((x_1,x_2,x_3,x_4),K)\longrightarrow (K(x_1),K(x_2),K(x_3),K(x_4)),
\]
from the open configuration spaces.
Also, let $h_{ij} :\Cnf[\R^3;4]\longrightarrow S^2$ be the smooth extension of the standard Gauss map to the associated compactifications
\begin{equation}\label{eq:h_ij}
		h_{ij} :\Cnf(\R^3;4)\longrightarrow S^2,\qquad
		(\bx_1,\bx_2,\bx_3,\bx_4)\longrightarrow \frac{\bx_j-\bx_i}{|\bx_j-
			\bx_i|}.
\end{equation}
Such extensions are uniquely defined as shown in \cite{Sinha:2004}. 
For the $2$--form $\eta_N$, given in \eqref{eq:eta_N}, $\eta_{N}\times\eta_{N}=p^\ast_1\eta_{N}\wedge p^\ast_2\eta_{N}$ is a $4$--form over $S^2\times S^2$. Let 
\begin{equation}\label{eq:beta}
\beta=h^\ast_4(\eta_{N}\times \eta_{N}),\quad h_4=(h_{13}\times h_{42})\circ \ev_4,
\end{equation}
be the pullback form over $\Cnf[\R;4]\times\K_N$. Note that the map $h_4$ in \eqref{eq:beta} can be formally encoded by the familiar arrow diagram 
$\vvcenteredinclude{.3}{gd_c2_chord_unmarked.pdf}$ of Section \ref{sec:arrow-diagrams}. The encoding is shown on Figure \ref{fig:gaussdiagramc2}, where the coordinates on $\Cnf[\R;4]$ and the Gauss maps are indicated. 

By restricting the map $\pi$ defined in \eqref{eq:pi-projection-C_4} to $\Cnf[\R;4]\times\K_N$, the pushforward of $\beta$ yields a function on $\K_N$ (i.e. the degree $0$--form on $\K_N$)
\begin{equation}\label{eq:pi-beta}
	\pi_\ast\beta(K) = \varint_{\Cnf[\R;4]} h^\ast_4(\eta_{N}\times \eta_N),\qquad K\in \K_N,
\end{equation}
which is understood in the sense of Appendix \ref{sec:diffeology}: The function $\pi_\ast\beta$ is defined over plots $\mathcal{K}:U\longrightarrow \K_N$, $U\subset \R^k$, where $\pi_\ast\beta=\{(\pi_\ast\beta)_{\mathcal{K}}\}_{\mathcal{K}:U\longrightarrow \K_N}$, and $(\pi_\ast\beta)_\mathcal{K}$ is given by the pushforward  of $(\beta)_\mathcal{K}$,in \eqref{eq:pi_ast-def}, over the trivial projection $\Cnf[\R;4]\times U\longrightarrow U$.
The question of whether this function is a knot invariant is a question of whether it is locally constant;  which  is true if $d \pi_\ast\beta =0$ (i.e. the $0$--form $\pi_\ast\beta$ is closed). Using \eqref{eq:stokes-fiber} and $d\beta=0$, we obtain 
\begin{equation}\label{eq:stokes-fiber2}
d\pi_\ast\beta=\pi_\ast d\beta - (\partial \pi)_\ast \beta=-(\partial \pi)_\ast \beta,
\end{equation}
where $(\partial \pi)_\ast \beta$ is the sum of pushforwards along the codimension one faces of $\Cnf[\R;4]$ as defined in \eqref{eq:bdry-pushforward-faces}.

Unfortunately, the $1$--form on right hand side of \eqref{eq:stokes-fiber2} vanishes for some, but not all, of the boundary faces of $\Cnf[\R;4]$; therefore a ``correction term'' is needed to obtain a knot invariant from $\pi_\ast\beta$ in \eqref{eq:pi-beta}. This correction term is constructed as an integral over a connected component of $\Cnf[\K_N,\R^3;3,1]$, which is also a manifold with corners, given as a pullback bundle over $p$ in the following diagram
\[
\xymatrix{
	\Cnf[\K_N,\R^3;3,1] \ar[rr] \ar^-{\ev_{3;1}}[rr] \ar@{->}_-{}[d]  & & \Cnf[\R^3;4] \ar^-{p}[d] \\
	\Cnf[\R;3]\times \K_N  \ar^-{\ev_3}_-{\ }[rr] & &  \Cnf[\R^3;3].
}
\]
The projection $p$ is the extension of the projection from $\Cnf(\R^3;4)$ to $\Cnf(\R^3;3)$ given by skipping the last coordinate. By definition 
\begin{equation*}\label{eq:Cnf_K}
	\Cnf[\K_N,\R^3;3,1]=\{(x, K;\mathbf{x})\in (\Cnf[\R;3]\times \K_N)\times \Cnf[\R^3;4]\ |\ \ev_3(x, K)= p(\mathbf{x})\},
\end{equation*}
 so that a fiber of $\Cnf[\K_N,\R^3;3,1]$ over $K\in \K_N$ is the subset of $\Cnf[\R^3;4]$ restricted to configurations $(\mathbf{x}_1,\mathbf{x}_2, \mathbf{x}_3, \mathbf{x}_4)$, where $\mathbf{x}_1,\mathbf{x}_2, \mathbf{x}_3$ ``sit'' on the knot $K$.  We refer to $\mathbf{x}_4$ as the point ``off the knot" and $(x_1,x_2,x_3)\in\Cnf[\R;3]$.  The top row evaluation map in the above diagram is given by
 \[
	\operatorname{ev}_{3;1}: \Cnf[\K_N,\R^3;3,1]\subset\Cnf[\R;3]\times \K_N\times \Cnf[\R^3;4] \longrightarrow \Cnf[\R^3;4],
\]
which is the extension of 
\[
	((x_1,x_2,x_3),K,(\mathbf{x}_1,\mathbf{x}_2, \mathbf{x}_3, \mathbf{x}_4))\longrightarrow (K(x_1),K(x_2),K(x_3),\mathbf{x}_4),
\]
the map $\ev_3$ is analogous to $\ev_4$ in \eqref{eq:ev_4}. 
It is then clear that the fiber of the obvious projection
\[
\pi':\Cnf[\K_N,\R^3;3,1]\longrightarrow \K_N
\]
is $6$ dimensional.
Let us define
\begin{equation}\label{eq:beta'-h'}
	\begin{split}
\beta' & =h^\ast_{3;1}(\eta_{N}\times \eta_N\times \eta_{N}),\qquad \text{where}\\
 h_{3;1} & :\Cnf[\R;3]\times \K_N\times \Cnf[\R^3;4]\longrightarrow S^2\times S^2\times S^2,\\
 h_{3;1} & =(h_{1\mathbf{4}}\times h_{\mathbf{4}2}\times h_{3\mathbf{4}})\circ \ev_{3;1},
 \end{split}
\end{equation}
this map is encoded by the {\em tripod} diagram of Figure \ref{fig:tripodc2}.
\begin{figure}[h!]
	\centering
	\includegraphics{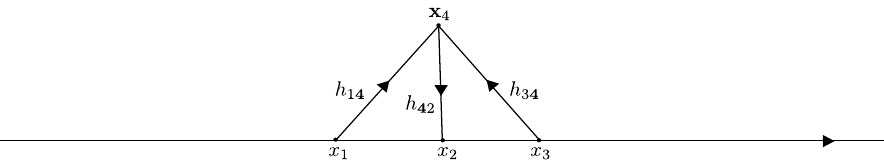}
	\caption{Trivalent tripod diagram associated to $h_{3;1}$.}
	\label{fig:tripodc2}
\end{figure}
The required correction term for $\pi_\ast\beta$ is defined (over the plots of $\K_N$) as the pushforward $\pi'_\ast \beta'=\{(\pi'_\ast \beta')_{\mathcal{K}}\}_{\mathcal{K}:U\longrightarrow \K_N}$:
\[
\pi'_\ast \beta'=\varint_{\Cnf[\K_N,\R^3;3,1]} h^\ast_{3;1}(\eta_{N}\times \eta_N\times \eta_{N}).
\]
We may now rewrite the integral formula \eqref{eq:casson-integral-eta} of Theorem \ref{thm:c2-integral} as 
\begin{equation}\label{eq:casson-integral-eta-pi_ast}
	I(K)=\pi_\ast\beta(K)-\pi'_\ast\beta'(K).
\end{equation}
Clearly, it requires a proof 
that $\pi'_\ast\beta'$ provides a required correction for $\pi_\ast\beta$.
Such proof, as well as the  approach detailed above, was first provided for closed knots $\K_{S^1}$ in the work of Bott and Taubes in \cite{Bott-Taubes:1994}, with the standard rotationally invariant volume form $\omega$ on $S^2$ in place of $\eta_N$. The main result of \cite{Bott-Taubes:1994} yields the integral stated in \eqref{eq:casson-integral-original} over the compactified domains. Using the 
pushforwards, the integral in \eqref{eq:casson-integral-original} can be stated as\footnote{Note the rational coefficients, which appear due to a different count of principal faces.}
\[
 c_2(K)=\frac{1}{4}\pi_\ast \alpha(K)-\frac{1}{3}\pi'_\ast\alpha'(K),
\]
where $\alpha=h^\ast_4(\omega\times\omega)$ and $\alpha'=h^\ast_{3;1}(\omega\times\omega\times\omega)$.
The work in \cite{Bott-Taubes:1994} was further extended in \cite{Thurston:1999, Cattaneo-Cotta-Ramusino-Longoni:2002, Volic:2007, Koytcheff-Munson-Volic:2013}. In \cite{Cattaneo-Cotta-Ramusino-Longoni:2002} and \cite{Koytcheff-Munson-Volic:2013} the authors construct a diagram cochain complex which puts the correspondence between Figures \ref{fig:gaussdiagramc2}, \ref{fig:tripodc2} and maps: $h_4$, $h_{3;1}$ in the general setting of cohomology of long knots and links.

\section{Proof of Theorem \ref{thm:c2-integral}}\label{sec:proof-A}

A smooth path in $\K_N$: 
	\begin{equation}\label{eq:smooth-isotopy}
	K:\R\times [0,1] \longrightarrow \R^3,\qquad (\,\cdot\,,z)\longrightarrow K(\,\cdot\,,z)\in \K_N,
	\end{equation}
connecting $K_0$ and $K_1$, satisfies the following 
	\begin{enumerate}[label={(\it\roman*)}]
	\item  $K(\,\cdot\,,0)=K_0$, $K(\,\cdot\,,1)=K_1$, \label{eq:K-(i)}
	\item The unit vector\footnote{here $\dot{K}=\frac{\partial}{\partial x} K$ denotes a derivative along the knot}  $\frac{\dot{K}(x,z)}{|\dot{K}(x,z)|}$ never points along $N$, i.e.  there exists a $\delta$--disk: $D^2_\delta(N)$ around $N\in S^2$, such that $\frac{\dot{K}(x,z)}{|\dot{K}(x,z)|}\notin D^2_\delta(N)\subset S^2$ for all $x$ and $z$. Generally, we will require that $\varepsilon<\delta$ in the definition \eqref{eq:eta_N} of $\eta_N$. \label{eq:K-(ii)}
	\item $K(\,\cdot\,,z)$ is a long knot in $\K_N$, i.e. for each $z\in [0,1]$ and $x\not\in [-1,1]$ we have 
	$K(x,z)=x X$, where $X$ is a fixed vector perpendicular to $N$ (we choose the $x$--axis direction $X=(1,0,0)$ for that purpose). \label{eq:K-(iii)}
	\end{enumerate}
	With $I(K_i)$, $i=0,1$ defined in \eqref{eq:casson-integral-eta}  we need to show that $I(K_0)=I(K_1)$. Observe that $I(K(\,\cdot\, ,z))$ is a function of $z\in [0,1]$, and
	\[
	I(K_1)-I(K_0)=\varint^1_0 \frac{\partial}{\partial z} I(K(\,\cdot\, ,z))\, dz =\varint^1_0 d I(K(\,\cdot\, ,z)).
	\]
	 Therefore, it suffices to verify 
	 \begin{equation}\label{eq:dI=0}
	  d I(K(\,\cdot\, ,z))=0.
	 \end{equation}
     Considering the path \eqref{eq:smooth-isotopy} as a plot in the knot space $\K_N$:
	   \begin{equation}\label{eq:plot-K}
	    \mathcal{K}:[0,1]\longrightarrow \K_N,\quad \mathcal{K}(z)=K(\,\cdot\,,z),
	   \end{equation}
	  in the differential form notation of Appendix \ref{sec:diffeology}, we obtain the $1$--form  
	 \begin{equation}\label{eq:dI}
	  d I(K(\,\cdot\, ,z)) =d \bigl((\pi_\ast\beta)_{\mathcal{K}} -(\pi'_\ast\beta')_{\mathcal{K}}\bigr)
	   =  -((\partial \pi)_\ast \beta)_{\mathcal{K}} +((\partial \pi')_\ast \beta')_{\mathcal{K}},
	 \end{equation}
	where the second identity follows from \eqref{eq:stokes-fiber2}.
	Recall from \eqref{eq:bdry-pushforward-faces}, $(\partial \pi)_\ast \beta$ and $(\partial \pi')_\ast \beta'$ can be written as the sum over the boundary faces (or strata) of $\Cnf[\R;4]$ and $\Cnf[\K_N,\R^3;3,1]$ respectively. In order to compute the sums it is sufficient to compute in local coordinates near the boundary faces of $\Cnf[\R;4]$ and $\Cnf[\K_N,\R^3;3,1]$ respectively.
	
	 We will follow the description of local coordinates near the boundary faces given in \cite{Bott-Taubes:1994, Volic:2007}, for equivalent approaches one may refer to  \cite{Cattaneo-Cotta-Ramusino-Longoni:2002} or \cite{Sinha:2004}.  

	For instance, every face (stratum) of\footnote{Similarly for $\Cnf[\K, \R^k;n,m]$.} $\Cnf[\R^k;n]$ denoted by $S_{\mathcal{A}}$, can be labeled by $|\mathcal{A}|=j$ nested subsets $\mathcal{A}=\{A_1,\ldots, A_j\}$, where  $A_k\subset\{1,2,\ldots,n\}$, $|A_k|\geq 2$ and for $A_p, A_q\in \mathcal{A}$:
	\[
	  A_p\cap A_q=\varnothing,\quad \text{or}\quad A_p\subseteq A_q.
	\]
	 As a direct consequence of coordinates description on $\Cnf[\R^k;n]$, one obtains that the codimension of $S_{\mathcal{A}}$ is $j$.  In particular, faces of $\Cnf[\R^k;n]$ of codimension $1$ correspond to a subset of the $n$ points either all colliding at the same time, or escaping to infinity at the same time. 
	
	\subsection{Faces of $\partial \Cnf[\R;4]$ appearing in $((\partial \pi)_\ast \beta)_{\mathcal{K}}$}\label{sec:comp-beta} Let us first consider $\Cnf[\R;4]$, the above description yields the following the codimension one faces of $\Cnf[\R;4]$ indexed by the following subsets of $\{1,2,3,4,\pm \infty\}$:
	
	\begin{equation}\label{eq:Cnf[R;4]-faces}
	\begin{split}
		\textit{ Principal Faces: }&\{1,2\},\{2,3\},\{3,4\}\\
		\textit{ Hidden Faces: }&\{1,2,3\},\{2,3,4\}\\
		\textit{ Anomalous Face: }&\{1,2,3,4\}\\
		\textit{ Faces at Infinity: }&\{1\to-\infty\},\{1,2\to-\infty\},\{1,2,3\to-\infty\},\{1,2,3,4\to-\infty\},\\
		&\{4\to+\infty\},\{3,4\to+\infty\}, \{2,3,4\to+\infty\},\{1,2,3,4\to+\infty\}.\\
	\end{split}
	\end{equation}	
	
	The pushforward $((\partial \pi)_\ast \beta)_{\mathcal{K}}$ will not vanish over the principal faces, i.e. faces indexed by $\{1,2\}$, $\{2,3\}$, $\{3,4\}$;  this non-vanishing is what requires the addition of the correction term $\pi_\ast\beta'$ in \eqref{eq:casson-integral-eta-pi_ast}.  On all other faces, the pushforward of the restriction of $\beta$ to that face is zero. The vanishing of the pushforwards of $\beta$ and $\beta'$ on non-principal faces will be shown in further sections. 
	
	 To give an idea of the local coordinates for these codimension $1$ faces, consider $A$ where $1\in A,\pm\infty\notin A$ and $|A|=a$. Then $S_A$ is the face where the first $a$ points on the knot collide away from $\infty$ in $\Cnf[\R;n]$.  Local coordinates for an open neighborhood of $S_A$ (which is itself obtained by setting $r=0$) are given by	
	\begin{equation}\label{eq:phi_A}
	\begin{split}
	&\phi_A:\Cnf(\R;a)\times\Cnf(\R;n-a+1)\times[0,\infty)\times [0,1] \longrightarrow\Cnf[\R;n]\times [0,1]\\
	&(u_1,\dots,u_a,x_1,\dots,x_{n-a+1},r, z)\longrightarrow  (x_1+ru_1,\dots,x_1+ru_a, x_2,\dots, x_{n-a+1}, z),
	\end{split}
	\end{equation}
	such that 	
	\[
	(1)\  x_i\neq x_j,\   i\neq j,\qquad
	(2)\ \displaystyle\sum_{i=1}^a u_i=0,\qquad 
	(3)\ \displaystyle\sum_{i=1}^a |u_i|^2=1.
	\] 
	With the help of the above coordinates, the pushforwards of $\beta$ along each boundary face $S_A$ are obtained by
	\begin{equation}\label{eq:partial-beta-restr}
	  (\partial \pi\bigl|_{S_A})_\ast \beta=(\partial \pi\bigl|_{r=0})_\ast (\phi^\ast_A\beta).
	\end{equation}
	Clearly, we have $(\partial \pi)_\ast \beta =\sum_A (\partial \pi\bigl|_{S_A})_\ast \beta$.
	\subsection{Computations for $((\partial \pi)_\ast \beta)_{\mathcal{K}}$ along the principal faces}\label{sec:principal-chords} For the principal faces: $A=\{1,2\}$, $\{2,3\}$, $\{3,4\}$, we may simplify the above coordinates. Additionally, for the purpose of computing the pushforwards $((\partial \pi\bigl|_{S_{i,j}})_\ast \beta)_{\mathcal{K}}$, we 
	add $[0,1]$ (the domain of the plot $\mathcal{K}:[0,1]\longrightarrow \K_N$ in \eqref{eq:plot-K}) and additional ``dummy'' $S^2$ factor 
	as follows\footnote{it is $\,\cdot\,+r$ (not $\,\cdot\,-r$) since $x_1<x_2<x_3<x_4$.}
	\[
	 	\hat{\phi}_{i,j}:\Cnf(\R;3)\times[0,\infty)\times S^2\times [0,1]  \longrightarrow\Cnf[\R;4]\times S^2\times [0,1],
	\]
	\begin{equation}\label{eq:phi_ij}
	\begin{split}
		\hat{\phi}_{1,2}:(x_1,x_2,x_3,r,\xi,z) & \longrightarrow  (x_1,x_1+ r,x_2,x_3,\xi,z),\\
		\hat{\phi}_{2,3}:(x_1,x_2,x_3,r,\xi,z) & \longrightarrow  (x_1,x_2,x_2+r,x_3,\xi,z),\\
		\hat{\phi}_{3,4}:(x_1,x_2,x_3,r,\xi,z) & \longrightarrow  (x_1,x_2,x_3,x_3+ r,\xi,z).
	\end{split}
	\end{equation}
	Calculating the pullback of the volume form\footnote{where $\omega$ is the Gauss form, as in \eqref{eq:lk-number-integral}.} $\sigma=dx_1\wedge dx_2\wedge dx_3\wedge dx_4\wedge\omega\wedge dz$ on $\Cnf[\R;4]\times S^2\times [0,1]$ over $\hat{\phi}_{i,j}$ shows that 
	\begin{equation}\label{eq:phi-orientations}
	 \hat{\phi}^\ast_{1,2}\sigma =  -\hat{\phi}^\ast_{2,3}\sigma =  \hat{\phi}^\ast_{3,4}\sigma,
	\end{equation}
	i.e. $\hat{\phi}^\ast_{1,2}$ and $\hat{\phi}^\ast_{3,4}$ are orientation preserving and $\hat{\phi}^\ast_{2,3}$ is orientation reversing.
	Computing the boundary extensions of Gauss maps\footnote{extended to the identity on the extra $S^2$ factor.} $(h\times \operatorname{id})\circ \hat{\phi}_{i,j}=(h_{13}\times h_{42}\times \operatorname{id})\circ \hat{\phi}_{i,j}$  along $S_{i,j}=\{r=0\}$ in $\hat{\phi}_{i,j}$ coordinates,  yields
	\begin{equation}\label{eq:h-bdry-extension}
	\begin{split}
		(h\times \operatorname{id}) & \circ\hat{\phi}_{1,2}(x_1,x_2,x_3,0,\xi,z)\\ 
		& =\displaystyle\lim_{r\to0}\left(\frac{K(x_2,z)-K(x_1,z)}{|K(x_2,z)-K(x_1,z)|},\frac{K(x_1+ r,z)-K(x_3,z)}{|K(x_1+ r,z)-K(x_3,z)|},\xi\right),
	\end{split}
	\end{equation}

	\begin{equation}\label{eq:h-bdry-extension-2}
	\begin{split}
	(h\times \operatorname{id}) & \circ\hat{\phi}_{2,3}(x_1,x_2,x_3,0,\xi,z)\\
	& = \left(\frac{K(x_2,z)-K(x_1,z)}{|K(x_2,z)-K(x_1,z)|},\frac{K(x_2,z)-K(x_3,z)}{|K(x_2,z)-K(x_3,z)|},\xi\right),
	\end{split}
	\end{equation}

	\begin{equation}\label{eq:h-bdry-extension-3}
	\begin{split}
	(h\times \operatorname{id}) & \circ\hat{\phi}_{3,4}(x_1,x_2,x_3,0,\xi,z)\\
	& = \left(\frac{K(x_3,z)-K(x_1,z)}{|K(x_3,z)-K(x_1,z)|},\frac{K(x_2,z)-K(x_3,z)}{|K(x_2,z)-K(x_3,z)|},\xi\right).
	\end{split}
	\end{equation}
	Adding the $S^2$ factor, does not change the evaluation of $((\partial \pi)_\ast \beta)_{\mathcal{K}}$. Indeed, using notation in \eqref{eq:partial-beta-restr}, yields for each $S_{i,j}$:
	\begin{equation}\label{eq:pi-beta-id-eval}
	\begin{split}
	((\partial \pi\bigl|_{S_{i,j}})_\ast \beta)_{\mathcal{K}} & =\bigl((\partial \pi\bigl|_{S_{i,j}})_\ast (h_{13}\times h_{42})^\ast (\eta_N\times\eta_N)\bigr)_{\mathcal{K}}\\
	 & =\bigl((\partial \pi\bigl|_{S_{i,j}\times S^2})_\ast (h_{13}\times h_{42}\times \operatorname{id})^\ast (\eta_N\times\eta_N\times\eta_N)\bigr)_{\mathcal{K}}\\
	 & =((\partial \pi\bigl|_{S_{i,j}\times S^2})_\ast (\beta\wedge \eta_N))_{\mathcal{K}},
	\end{split}
	\end{equation}
	where the second idenitity follows from the product integration and $\varint_{S^2} \eta_N=1$.	

	\subsection{Faces of $\partial \Cnf[\K_N,\R^3;3,1]$ appearing in $((\partial \pi')_\ast \beta')_{\mathcal{K}}$}
	By \eqref{eq:bdry-pushforward-faces}, $(\partial \pi')_\ast \beta'$is the sum of push forwards of $\beta'$ along the boundary faces of the fiber of 
	\[
	\pi':\Cnf[\K_N,\R^3;3,1]\longrightarrow \K_N,
	\] 
	This boundary has faces indexed by the following	
	\begin{equation}\label{eq:Cnf[K,R^3;3,1]-faces}
	\begin{split}
		\textit{ Principal Faces: }&\{1,2\},\{2,3\},\{1,\mathbf{4}\},\{2,\mathbf{4}\},\{3,\mathbf{4}\};\\
		\textit{ Hidden Faces: }&\{1,2,3\},\{1,2,\mathbf{4}\},\{2,3,\mathbf{4}\};\\
		\textit{ Anomalous Face: }&\{1,2,3,\mathbf{4}\};
	\end{split}
	\end{equation}
	\begin{equation}\label{eq:Cnf[K,R^3;3,1]-faces-2}
	\begin{split}
		\textit{ Faces at Infinity: }&\{1\to-\infty\},\{1,2\to-\infty\}\{1,2,3\to-\infty\},\{3\to\infty\},\{2,3\to\infty\},\\
		&\{1,2,3\to\infty\},\{1\to-\infty \ \& \ \mathbf{4}\to\infty\},\\
		&\{1,2\to-\infty\ \& \ \mathbf{4}\to\infty\},\{1,2,3\to-\infty\ \& \ \mathbf{4}\to\infty\},\\
		&\{3\to\infty\ \& \ \mathbf{4}\to\infty\},\{2,3\to\infty\ \& \ \mathbf{4}\to\infty\},\\
		&\{1,2,3\to\infty\ \& \ \mathbf{4}\to\infty\},\{\mathbf{4}\to\infty\},
	\end{split}
	\end{equation}
	where the indices are associated to the colliding coordinates. The interior of the fiber of $\Cnf[\K_N,\R^3;3,1]$, over a fixed $K\in \K_N$ is given as
	\[
	\begin{split}
	\Cnf(K,\R^3;3,1) & \cong\bigl\{(x_1,x_2,x_3,\mathbf{x}_4)\ |\ (x_1,x_2,x_3)\in \Cnf(\R;3),\\ 
	& \qquad\qquad\qquad\qquad\quad  \mathbf{x}_4\in \R^3-\{K(x_1), K(x_2), K(x_3)\} \bigr\}.
	\end{split}
	\]
	Note that $``\mathbf{4}\to\infty"$ means that the norm of $\mathbf{x}_4\in\R^3$ is tending to $\infty$. 
	
	\subsection{Computations for $((\partial \pi')_\ast \beta')_{\mathcal{K}}$ along the principal faces}  Similarly as in Section \ref{sec:comp-beta}, we first 
	calculate in local coordinates in the neighborhood of principal faces: $A=\{1,\mathbf{4}\}$, $\{2,\mathbf{4}\}$, $\{3,\mathbf{4}\}$.
	The local coordinates about 
	$S_{i,\mathbf{4}}=\{r=0\}$, amended by the factor $[0,1]$,  are given as
	\[
	\begin{split}
	\psi_{i,\mathbf{4}}:\Cnf(\R;3)\times [0,\infty)\times S^2\times [0,1] &\longrightarrow \Cnf[\R^3;4]\times [0,1]\\
	\psi_{i,\mathbf{4}}: (x_1,x_2,x_3,r,\xi, z) & \longrightarrow (K(x_1,z),K(x_2,z),K(x_3,z), K(x_i,z)+r\xi, z).
	\end{split}
	\]
	A routine calculation shows that $\psi_{i,\mathbf{4}}$'s do not change the orientation.
	 Computing the smooth boundary extensions of Gauss maps $h_{3;1}\circ \psi_{i,\mathbf{4}}$ along $S_{i,\mathbf{4}}$,
	yields:
	\begin{equation}\label{eq:h'-bdry-extension}
	\begin{split}
	(h_{3;1}\circ\psi_{1,\mathbf{4}}) & (x_1,x_2,x_3,0,\xi,z)=\displaystyle\lim_{r\to0}\bigg(\frac{K(x_1,z)+r\xi-K(x_1,z)}{|K(x_1,z)+r\xi-K(x_1,z)|},
	\end{split}
	\end{equation}
	\begin{equation}\label{eq:h'-bdry-extension-1}
	\begin{split}
	&\qquad\qquad\frac{K(x_2,z)-[K(x_1,z)+r\xi]}{|K(x_2,z)-[K(x_1,z)+r\xi]|},\frac{K(x_1,z)+r\xi-K(x_3,z)}{|K(x_1,z)+r\xi-K(x_3,z)|}\bigg)\\
	&\qquad\qquad=\left(\xi,\frac{K(x_2,z)-K(x_1,z)}{|K(x_2,z)-K(x_1,z)|},\frac{K(x_1,z)-K(x_3,z)}{|K(x_1,z)-K(x_3,z)|}\right),	
	\end{split}
	\end{equation}
	\begin{equation}\label{eq:h'-bdry-extension-2}
	\begin{split}
	(h_{3;1}\circ\psi_{2,\mathbf{4}}) & (x_1,x_2,x_3,0,\xi,z) = \left(\frac{K(x_2,z)-K(x_1,z)}{|K(x_2,z)-K(x_1,z)|},-\xi,\frac{K(x_2,z)-K(x_3,z)}{|K(x_2,z)-K(x_3,z)|}\right),
	\end{split}
	\end{equation}	
	\begin{equation}\label{eq:h'-bdry-extension-3}
	\begin{split}
	(h_{3;1}\circ\psi_{3,\mathbf{4}}) & (x_1,x_2,x_3,0,\xi,z) =\left(\frac{K(x_3,z)-K(x_1,z)}{|K(x_3,z)-K(x_1,z)|},\frac{K(x_2,z)-K(x_3,z)}{|K(x_2,z)-K(x_3,z)|},\xi\right).
	\end{split}
	\end{equation}
	
	\no Comparing, the smooth extensions of the Gauss maps in \eqref{eq:h-bdry-extension}, \eqref{eq:h-bdry-extension-2}, \eqref{eq:h-bdry-extension-3} and \eqref{eq:h'-bdry-extension}, \eqref{eq:h'-bdry-extension-2}, \eqref{eq:h'-bdry-extension-3} yields, up to a permutation of the $S^2$ factors\footnote{which do not change the orientation} and 
	antipodal symmetry $A$ in \eqref{eq:antipodal-eta_N} 
	\begin{equation}\label{eq:principal-faces-h-equal}
	 (h\times \operatorname{id})\circ\hat{\phi}_{i,i+1}=h_{3;1}\circ\psi_{i,\mathbf{4}},\qquad i=1,2,3.
	\end{equation}
	\subsection{Canceling of the pushforwards along the principal faces} Thanks to \eqref{eq:principal-faces-h-equal} and Proposition \ref{prop:equal-along}, we obtain  
	\begin{equation}\label{eq:principal-faces-cancel}
	 ((\partial \pi\bigl|_{S_{i,i+1}})_\ast \beta)_{\mathcal{K}}=((\partial \pi\bigl|_{S_{i,i+1}\times S^2})_\ast (\beta\wedge \eta_N))_{\mathcal{K}}=((\partial \pi'\bigl|_{S_{i,\mathbf{4}}})_\ast \beta')_{\mathcal{K}},\qquad i=1,2,3.
	\end{equation}
	 Note that for $i=2$ the sign in \eqref{eq:principal-faces-cancel}, which appears in \eqref{eq:h'-bdry-extension-2} is compensated with the orientation of the face $S_{2,3}$ in coordinates $\hat{\phi}_{2,3}$, see \eqref{eq:phi-orientations}.

\subsection{Vanishing over Hidden and Anomalous Faces of $\Cnf[\R;4]$}\label{sec:vanishing-faces-1}

Suppose that $S_A$ is a codimension one face of $\Cnf[\R;4]$ indexed by a subset $A\subseteq\{1,2,3,4\}$, $|A|>2$.  We claim that if $A$ contains $\{1,3\}$ or $\{2,4\}$, then 
\begin{equation}\label{eq:phi-A-beta}
 \phi^\ast_A\beta\bigl|_{r=0}=0.
\end{equation}
 This applies to the hidden faces and the anomalous face listed in \eqref{eq:Cnf[R;4]-faces}. 

To see that this is true for $A$ containing $\{1,3\}$, consider the local parametrization $\phi_{A}$ in \eqref{eq:phi_A}, where the points are colliding at $K(x_1)$.  Composed with the map $h_{13}$, $\phi_{A}$ extends along $S_A$ as follows:
\[
h_{13}\circ\phi_{A}(x_1,\ldots,u_3,\ldots,r,z)=\lim_{r\to 0} \frac{K(x_1+r u_3,z)-K(x_1,z)}{|K(x_1+r u_3,z)-K(x_1,z)|} =\frac{K'(x_1,z)}{|K'(x_1,z)|},
\]
By \ref{eq:K-(ii)} on page \pageref{eq:K-(ii)}, $\frac{K'(x,z)}{|K'(x,z)|}$ is excluded from the support of $\eta_N$, thus $\phi^\ast_{A}(h_{13}^\ast \eta_N)\bigl|_{r=0}=0$, and therefore $\beta|_{S_{A}}=0$.  
Similarly, if $A$ contains $\{2,4\}$  then, the local parametrization $\phi_{A}$ composed with the map $h_{42}$ has the same property, thus $\phi^\ast_{A}(h^\ast_{42}\eta_N))=0$.

\subsection{Vanishing over Faces at Infinity  of $\Cnf[\R;4]$}\label{sec:vanishing-faces-2}
Let us perform a computation for $S_A$ when e.g. $4\to +\infty$ in $A$, and $2\notin A$, i.e. $K(x_2)$ is fixed on the knot. For such faces, the local coordinates in \eqref{eq:phi_A} differ only by replacing $ru_i$ with $\tfrac{1}{r} u_i$ in the definition. The map $h_{42}$ extends along $S_A$, as follows
\[
\begin{split}
h_{42}\circ\phi_{A}(\ldots,x_2,\ldots,x_4,\ldots,u_4,r,z) & =\lim_{r\to 0} \frac{K(x_2,z)-K(x_4+\tfrac{1}{r} u_4,z)}{|K(x_2,z)-K(x_4+\tfrac{1}{r} u_4,z)|}=\lim_{r\to 0} \frac{K(x_2,z)-(x_4+\tfrac{1}{r} u_4)X}{|K(x_2,z)-(x_4+\tfrac{1}{r} u_4)X|}\\
& =\lim_{r\to 0} \frac{r K(x_2,z)- (r x_4+u_4)X}{|r K(x_2,z)- (r x_4+u_4)X|}=-X
\end{split}
\]
where $X$ is the direction vector orthogonal to $N$, defined in the condition \ref{eq:K-(iii)} on page \pageref{eq:K-(ii)}. By assumption  \ref{eq:K-(ii)}, $-X$ is excluded from the support of $\eta_N$, thus $h_{42}^\ast(\eta_N)=0$, and therefore\footnote{For small enough $r$ the image of $h_{42}$ is outside of the support of $\eta_N$.} $\phi^\ast_{A}\beta|_{S_{A}}=\phi^\ast_{A}(h_{41}^\ast\eta_N\wedge h_{24}^\ast\eta_N\wedge h_{43}^\ast\eta_N)\bigr|_{r=0}=0$. Similarly, if $A$ contains $\{2\to +\infty,4\to +\infty\}$  then in the above limit, we replace $K(x_2,z)$ with $K(x_2+\tfrac{1}{r} u_2,z)$ and similar analysis yields the same result. The remaining cases 
are fully analogous.

\subsection{Vanishing over faces $\{1,2\}$, $\{2,3\}$, and $\{1,2,3\}$ of $\Cnf[\K_N,\R^3;3,1]$}\label{sec:vanishing-faces-3} 
The local coordinates around $S_{\{1,2\}}=\{r=0\}$ is given as
\[
\begin{split}
	\psi_{1,2}:\Cnf(\R;2)\times [0,\infty)\times (\R^3-\{K(x_1,z)\})\times [0,1] &\longrightarrow \Cnf[\R^3;4]\times [0,1]\\
	\psi_{1,2}: (x_1,x_2,r,\bx, z) \longrightarrow (K(x_1,z), K(x_1+r,z), & K(x_2,z),  \bx, z).
\end{split}
\]
\begin{equation}\label{eq:h'-bdry-extension-5}
	\begin{split}
		(h'\circ\psi_{1,2}) & (x_1,x_2,0,\bx, z)=\displaystyle\lim_{r\to0}\bigg(\frac{\bx-K(x_1,z)}{|\bx-K(x_1,z)|},\frac{K(x_1+r,z)-\bx}{|K(x_1+r,z)-\bx|},\frac{\bx-K(x_2,z)}{|\bx-K(x_2,z)|}\bigg)\\
		&\qquad\qquad=\left(\frac{\bx-K(x_1,z)}{|\bx-K(x_1,z)|},\frac{K(x_1,z)-\bx}{|K(x_1,z)-\bx|},\frac{\bx-K(x_2,z)}{|\bx-K(x_2,z)|}\right).
	\end{split}
\end{equation}
Clearly, the first two factors are equal after applying the antipodal map $A$, \eqref{eq:antipodal-eta_N},  ($h_{24}=A\circ h_{42}$). Therefore,
\[
\begin{split}
 \psi_{1,2}^\ast (h^\ast_{1\mathbf{4}}\eta_N)\bigl|_{r=0} =\psi_{1,2}^\ast (h^\ast_{\mathbf{4}2} A^\ast\eta_N)|_{r=0} & =-\psi_{1,2}^\ast (h^\ast_{\mathbf{4}2} \eta_S)|_{r=0},\qquad \text{and}\qquad\\
\psi_{1,2}^\ast\beta'|_{r=0}=\psi_{1,2}^\ast (h^\ast_{1\mathbf{4}}\eta_N\wedge h^\ast_{\mathbf{4}2} \eta_N\wedge h^\ast_{\mathbf{4}3} \eta_N)|_{r=0} & =-\psi_{1,2}^\ast (h^\ast_{\mathbf{4}2} (\eta_S\wedge \eta_N)\wedge h^\ast_{\mathbf{4}3} \eta_N)|_{r=0}=0,\\
 \end{split}
\]
where the vanishing in the last identity follows since $\eta_S\wedge \eta_N=0$ on $S^2$ as the degree $4$ form. For the face $S_{\{2,3\}}$ and $S_{\{1,2,3\}}$ the computation is analogous.

\subsection{Vanishing over the hidden and anomalous faces  of $\Cnf[\K_N,\R^3;3,1]$}\label{sec:vanishing-faces-4} 
All of the faces, under consideration, contain the indices $\{i,j,\mathbf{4}\}$. Without loss of generality, we present calculation for $A=\{1,2,\mathbf{4}\}$, the local coordinates around $S_{\{1,2,\mathbf{4}\}}=\{r=0\}$ are given as
\[
\begin{split}
\psi_A:\Cnf(\R;2)\times S^3\times [0,\infty)\times [0,1] &\longrightarrow \Cnf[\R^3;4]\times [0,1]\\
\psi_A: (x_1,x_2,u,\xi, r, z) \longrightarrow (K(x_1,z), K(x_1+r u,z), & K(x_2,z), K(x_1,z) +r\xi, z),
\end{split}
\]
(where $(u,\xi)\in S^3$, i.e. $u^2+|\xi|^2=1$, see \cite{Volic:2007,Bott-Taubes:1994}),
\begin{equation}\label{eq:h'-bdry-extension-4}
\begin{split}
(h'\circ\psi_A) & (x_1,x_2,u,\xi, 0, z)\\
& = \displaystyle\lim_{r\to 0}\bigg(\frac{(K(x_1,z) + r\xi)-K(x_1,z)}{|(K(x_1,z) +r\xi)-K(x_1,z)|},\\
&\qquad \frac{K(x_1+ r u,z)-(K(x_1,z) +r\xi)}{|K(x_1+r u,z)-(K(x_1,z) +r\xi)|}, \frac{(K(x_1,z) +r\xi)-K(x_2,z)}{|(K(x_1,z) +r\xi)-K(x_2,z)|}\bigg)\\
&=\left(\frac{\xi}{|\xi|},\frac{K'(x_1,z) u-\xi}{|K'(x_1,z) u-\xi|},\frac{K(x_1,z)-K(x_2,z)}{|K(x_1,z)-K(x_2,z)|}\right).
\end{split}
\end{equation}
Let us estimate how much the image of the above map intersects the support of $\eta_N\times \eta_N\times \eta_N$, which is a product of small $\varepsilon$--disks around $N\in S^2$ of each $S^2$ factor in the codomain of $h'$ in \eqref{eq:beta'-h'}. Thanks to \ref{eq:K-(ii)}, 
we obtain\footnote{where $\frac{1}{\varepsilon} O(\varepsilon)\longrightarrow \text{const}$ for $\varepsilon\longrightarrow 0$.}
\[
\frac{\xi}{|\xi|}=N+O(\varepsilon),\qquad \frac{K'(x_1,z) u-\xi}{|K'(x_1,z) u-\xi|}=N+O(\varepsilon),
\]
which yields
\[
\begin{split}
\xi & =|\xi| N+O(\varepsilon),\qquad K'(x_1,z) u-\xi=|K'(x_1,z) u-\xi| N+O(\varepsilon),\\
 K'(x_1,z) u & = (|\xi|+|K'(x_1,z) u-\xi|) N+O(\varepsilon).
\end{split}
\]
Taking the cross product $\times N$ of both sides of the last identity, we obtain
\[
(K'(x_1,z)\times N)  u  = O(\varepsilon),\quad \Rightarrow\quad u  = O(\varepsilon),
\]
since  $0<|K'(x_1,z)\times N|<\infty$, (for all $x_1$ and $z$) and $|u|\leq 1$.  In conclusion, we obtain for small $\varepsilon$
\[
 (h'\circ\psi_A)(x_1,x_2,u,\xi, 0, z)\approx \left(\frac{\xi}{|\xi|},-\frac{\xi}{|\xi|},\frac{K(x_1,z)-K(x_2,z)}{|K(x_1,z)-K(x_2,z)|}\right),
\]
thus the first two coordinates certainly have no intersection with the support of the first two factors of $\eta_N\times \eta_N\times \eta_N$ and therefore $ \psi^\ast_A\beta'\bigl|_{r=0}=0$ as claimed.

\subsection{Vanishing over the faces at infinity of $\Cnf[\K_N,\R^3;3,1]$}\label{sec:vanishing-faces-5} 
In the case only ${\bf 4}\to \infty$ in $A$, i.e. all $K(x_i)$, $1\leq i\leq 3$ are fixed on the knot, we have 
\[
\begin{split}
(h_{{\bf 4} 1}\times h_{2 {\bf 4}}\times h_{{\bf 4} 3}) & \circ\phi_{A}(x_1,x_2,x_3, 0,\xi,z)   = \displaystyle\lim_{r\to 0}\bigg(\frac{ \frac{1}{r}\xi-K(x_1,z)}{|\frac{1}{r}\xi-K(x_1,z)|},\frac{K(x_2,z)-\frac{1}{r}\xi}{|K(x_2,z)-\frac{1}{r}\xi|},\\
&\qquad  \frac{ \frac{1}{r}\xi-K(x_3,z)}{|\frac{1}{r}\xi-K(x_3,z)|}\bigg)=\big(\xi,-\xi,\xi\big),
\end{split}
\]
again the pullback $\phi^\ast_{A}((h_{{\bf 4} 1}\times h_{2 {\bf 4}}\times h_{{\bf 4} 3})^\ast(\eta_N\times \eta_N\times \eta_N)\bigl|_{r=0}=0$, because the above map factors through a space of lower dimension ($\dim=2$).

In the case $({\bf 4}\to \infty)\in A$ and $i\in A$, $1\leq i\leq 3$, i.e. $K(x_i)$ moves to $\pm\infty$ along the knot and $\bx_{4}\to \infty$ in $\R^3$, see \eqref{eq:Cnf[K,R^3;3,1]-faces-2}, the map $h_{{\bf 4} i}$ extends along $S_A$ as follows
\[
\begin{split}
h_{{\bf 4} i}\circ\phi_{A}(\ldots,x_i,\ldots,u_i,\ldots, 0,\xi,z) & =\lim_{r\to 0} \frac{\frac{1}{r}\xi-K(x_i+\frac{1}{r} u_i,z)}{|\frac{1}{r}\xi-K(x_i+\frac{1}{r} u_i,z)|}\\
=\lim_{r\to 0} \frac{\frac{1}{r}\xi-(x_i+\frac{1}{r} u_i) X}{|\frac{1}{r}\xi-(x_i+\frac{1}{r} u_i) X|}=\frac{\xi- u_i X}{|\xi- u_i X|}.
\end{split}
\]
where $X$ is the direction vector orthogonal to $N$, defined in the condition \ref{eq:K-(iii)}. Hence $h_{{\bf 4} i}\circ \phi_{A}$ is either equal $\pm \frac{\xi}{|\xi|}$ or $\frac{\xi- u_i X}{|\xi- u_i X|}$ the map $(h_{{\bf 4} 1}\times h_{2 {\bf 4}}\times h_{{\bf 4} 3})\circ\phi_{A}$ factors through a lower dimensional space as in the previous case.

The only case left is $\{ {\bf 4}\to \infty\} \not\in A$, and $\{i\to \pm \infty\}\in A$, when
\[
\begin{split}
h_{{\bf 4} i}\circ\phi_{A}(\ldots,x_i,\ldots,u_i,\ldots, 0,\bx,z) & =\lim_{r\to 0} \frac{\bx-(x_i\pm \frac{1}{r} u_i) X}{|\bx-(x_i\pm \frac{1}{r} u_i) X|}=\pm\frac{ u_i X}{| u_i X|}=\pm X.
\end{split}
\]
which analogous to the case already considered in Section \ref{sec:vanishing-faces-2}.

\subsection{Finishing the proof of invariance of $I(K)$ over $\K_N$}
Combining the results of Sections \ref{sec:principal-chords} -- \ref{sec:vanishing-faces-5} 
\[
d I(K(\,\cdot\, ,z)) =d \bigl((\pi_\ast\beta)_{\mathcal{K}} -(\pi'_\ast\beta')_{\mathcal{K}}\bigr)
=  -((\partial \pi)_\ast \beta)_{\mathcal{K}} +((\partial \pi')_\ast \beta')_{\mathcal{K}}
\]
proves invariance of the integral in \eqref{eq:casson-integral-eta} of Theorem \ref{thm:c2-integral}.

\subsection{Arrow diagram counting for double crossing projections}\label{sec:arrow-counting-dbl-crossing}
Recall the standard argument \cite[p. 41]{Bott-Tu:1982}, tells us that for any map $F:M^m\longrightarrow N^n$, $m=n$, between manifolds (with corners) and a regular value $y\in \operatorname{int}(N)$, where $F^{-1}(y)=\{x_1,\ldots,x_m\}\subset \operatorname{int}(M)$, the local degree \cite{Hatcher:2002} of $F$ at $y$ is given as 
\[
 \deg_y F = \sum_{i} \operatorname{sign} \bigl(\operatorname{det} DF(x_i)\bigr)=\int_M F^\ast\eta_y,
\]
where $\eta_y$ is a unit $n$--form ($\varint_N \eta_y=1$), supported on a sufficiently small neighborhood of $y$ in $N$. 
Clearly, this argument was used in Section \ref{sec:integrals-combinatorial} to obtain the standard formula for the linking number in \eqref{eq:lk-crossing-count}. For the long knot, which is near its regular planar projection we have
\begin{lemma}\label{lem:c_2-arrow-regular-crossing}
	Suppose $K$ projects on the plane orthogonal to $N$ in a regular double crossing knot diagram, then the integral $I(K)$ in \eqref{eq:casson-integral-eta} equals 
	\begin{equation}\label{eq:I(K)-gauss-dbl-crossing}
	I(K)=\varint\limits_{\Cnf[\R;4]} h^\ast_{13}\eta_N\wedge h^\ast_{42}\eta_N=\langle \vvcenteredinclude{.3}{gd_c2_chord.pdf},G_K\rangle.
	\end{equation}
\end{lemma}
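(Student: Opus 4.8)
The plan is to start from the pushforward expression $I(K) = \pi_\ast\beta(K) - \pi'_\ast\beta'(K)$ of \eqref{eq:casson-integral-eta-pi_ast} and establish two independent facts, for $\varepsilon$ sufficiently small: that the tripod correction term $\pi'_\ast\beta'(K)$ vanishes identically, and that the remaining term $\pi_\ast\beta(K) = \varint_{\Cnf[\R;4]} h^\ast_{13}\eta_N \wedge h^\ast_{42}\eta_N$ equals the arrow diagram count by the local degree argument recalled at the opening of this section. Since for small $\varepsilon$ the first term is already the (constant) local degree and the second is $0$, the limit in \eqref{eq:casson-integral-eta} is trivial and \eqref{eq:I(K)-gauss-dbl-crossing} follows.

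For the vanishing of the tripod term I would argue on the support of the integrand. If $(x_1,x_2,x_3,\bx)$ lies in the support of $h^\ast_{1\mathbf{4}}\eta_N \wedge h^\ast_{\mathbf{4}2}\eta_N \wedge h^\ast_{3\mathbf{4}}\eta_N$, then the three difference vectors $\bx - K(x_1)$, $K(x_2) - \bx$, and $\bx - K(x_3)$ all point within an $\varepsilon$--cone about $N$. Hence $K(x_1), K(x_2), K(x_3)$ and $\bx$ lie, up to an $O(\varepsilon)$ error, on a single line parallel to $N$, so the three knot points project to within $O(\varepsilon)$ of a common point of the plane orthogonal to $N$. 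Because $x_1 < x_2 < x_3$ are distinct and the projection is a regular double--crossing diagram (no triple points), these three projected points are pairwise separated by a fixed positive distance; choosing $\varepsilon$ below a fraction of this minimal separation empties the support, whence $\pi'_\ast\beta'(K)=0$.

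For the main term I would first check that $(N,N)\in S^2\times S^2$ is a regular value of $h_4 = (h_{13}\times h_{42})\circ\ev_4$ and then apply the local degree formula. Regularity holds because $K\in\K_N$ forces $\dot K\nparallel N$, and at a transverse double crossing the two strands project to independent tangent directions, so each of $h_{13}$ and $h_{42}$ is a local diffeomorphism there; as $h_{13}$ depends only on $(x_1,x_3)$ and $h_{42}$ only on $(x_2,x_4)$, the differential $Dh_4$ is block diagonal after reordering coordinates and is invertible. The preimages in $\{x_1<x_2<x_3<x_4\}$ are exactly the configurations where $(x_1,x_3)$ realizes a crossing with over--strand at $x_3$ (so $h_{13}=N$, the arrow $x_1\to x_3$) and $(x_4,x_2)$ a crossing with over--strand at $x_2$ (the arrow $x_4\to x_2$); these are precisely the embeddings $\phi$ of the $c_2$ arrow diagram of Figure \ref{fig:gaussdiagramc2} into $G_K$. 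Thus $\pi_\ast\beta(K) = \sum_s \operatorname{sign}(\det Dh_4(s)) = \sum_\phi \operatorname{sign}(\phi) = \langle A, G_K\rangle$ once the Jacobian sign is matched to the embedding sign.

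The main obstacle is this last sign identification: showing $\operatorname{sign}(\det Dh_4(s))$ equals the product $\operatorname{sign}(c_1)\operatorname{sign}(c_2)$ prescribed by \eqref{eq:sign_representation}. The block structure factors $\det Dh_4$ as $\det D_{(x_1,x_3)}h_{13}\cdot\det D_{(x_2,x_4)}h_{42}$, and each factor must be identified with its crossing sign exactly as in the linking--number computation \eqref{eq:lk-crossing-count}--\eqref{eq:lk-eta_N}. The delicate bookkeeping involves the orientation of $\Cnf[\R;4]$, the transposition reordering $(x_1,x_2,x_3,x_4)\mapsto(x_1,x_3,x_2,x_4)$, the ordering of the two $S^2$ factors, and the fact that $\eta_N$ is not antisymmetric, so the over/under convention selects the direction $+N$ rather than $-N$. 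Verifying that all of these signs conspire to reproduce the product crossing sign, and hence the Polyak--Viro count, is the one genuinely subtle point.
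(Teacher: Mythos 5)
Your overall skeleton (start from \eqref{eq:casson-integral-eta-pi_ast}, kill the tripod correction term, identify the remaining term with the Polyak--Viro count via the local degree formula at the regular value $(N,N)$) is the same as the paper's, and your treatment of the main term is essentially the paper's argument with the sign bookkeeping flagged but not carried out --- which matches the level of detail in the paper itself. The genuine problem is your proof that the tripod term \eqref{eq:tripod-vanish} vanishes. You claim the support of $h^\ast_{1\mathbf{4}}\eta_N\wedge h^\ast_{\mathbf{4}2}\eta_N\wedge h^\ast_{3\mathbf{4}}\eta_N$ is empty because ``no triple points'' forces the three projected points $\pr(K(x_1)),\pr(K(x_2)),\pr(K(x_3))$ to be pairwise separated. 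This inference is false: the absence of triple points only rules out three distinct \emph{local strands} meeting over one point of the plane, but nothing prevents two of the three configuration points from lying on the \emph{same} strand, arbitrarily close to each other, near a double crossing. Indeed the support constraints $\tfrac{\mathbf{x}_4-K(x_1)}{|\mathbf{x}_4-K(x_1)|}\approx_\varepsilon N$, $\tfrac{K(x_2)-\mathbf{x}_4}{|K(x_2)-\mathbf{x}_4|}\approx_\varepsilon N$, $\tfrac{\mathbf{x}_4-K(x_3)}{|\mathbf{x}_4-K(x_3)|}\approx_\varepsilon N$ force exactly the opposite of separation: all three knot points and $\mathbf{x}_4$ must lie near a single line parallel to $N$ through a crossing, with $K(x_1)$ and $K(x_3)$ on the under strand and $K(x_2)$ on the overpass --- this is the configuration exhibited in Figure \ref{fig:eta-support-double}, which persists for every $\varepsilon>0$. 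Your argument, taken literally, would also ``prove'' that no configuration can have two projections close together, i.e.\ that crossings themselves do not exist in the support, which is absurd since crossings are precisely where projections coincide.

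The paper handles this step by a pointwise vanishing of the \emph{integrand} on the (nonempty) support rather than by emptiness of the support: writing $\beta'$ out explicitly, the form is a product of bump factors with $\det[E_1,-E_2,E_3]\,dx_1\wedge dx_2\wedge dx_3\wedge d\mathbf{x}_4$, where $E_i$ is as in \eqref{eq:E_i}. After normalizing the strands to be straight segments near each crossing, $K(x_1)$ and $K(x_3)$ lie on the common line $\ell_{\operatorname{lower}}$, so $\mathbf{x}_4-K(x_1)$ and $\mathbf{x}_4-K(x_3)$ lie in (or span) the plane $P$ containing $\ell_{\operatorname{lower}}$ and $\mathbf{x}_4$; hence $E_1$ and $E_3$ are both orthogonal to $P$, therefore parallel, and $\det[E_1,-E_2,E_3]=0$. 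If you want to repair your write-up, you must either reproduce an argument of this kind on the support you actually have, or give a correct reason the support is empty (the separation claim cannot be that reason; any attempt via the ordering $x_1<x_2<x_3$ and the strand-visitation pattern would be a new and delicate argument sensitive to the geometry of the overpass, not the one you wrote). As it stands, the vanishing of the tripod term --- half of the lemma --- is unproven in your proposal.
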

Since $\langle \vvcenteredinclude{.3}{gd_c2_chord.pdf},G_K\rangle$ coincides with the arrow diagram formula of \cite{Polyak-Viro:2001} for the 
second coefficient of the Conway polynomial $c_2(K)$, and $I(K)$ is the knot invariant, we have
\begin{cor}\label{cor:I(K)=c_2(K)}
	$I(K)=c_2(K)$.
\end{cor}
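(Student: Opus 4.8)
The plan is to deduce the identity from Lemma~\ref{lem:c_2-arrow-regular-crossing} together with the invariance of $I$ established in the first part of the proof of Theorem~\ref{thm:c2-integral}. First I would recall that Lemma~\ref{lem:c_2-arrow-regular-crossing} evaluates the localized integral for any $K\in\K_N$ admitting a regular double--crossing projection onto the plane orthogonal to $N$, giving the arrow diagram count on the right--hand side of \eqref{eq:I(K)-gauss-dbl-crossing}. By \cite{Polyak-Viro:2001} this count is exactly the Polyak--Viro formula for the second coefficient of the Conway polynomial, so it equals $c_2(K)$ for every such $K$. Combining these two facts yields $I(K)=c_2(K)$ for all $K\in\K_N$ that possess a regular double--crossing projection in the direction $N$.

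Second, I would upgrade this to every knot in $\K_N$ by invoking that both quantities are isotopy invariants. The first part of Theorem~\ref{thm:c2-integral}, proved in Sections~\ref{sec:principal-chords}--\ref{sec:vanishing-faces-5}, shows $dI(K(\,\cdot\,,z))=0$ along an arbitrary smooth path \eqref{eq:smooth-isotopy} in $\K_N$, so $I$ is constant on each path component of $\K_N$; and $c_2$ is an isotopy invariant by definition. Since the path components of $\K_N$ and $\K$ coincide (see the remark following \eqref{eq:K_N}) and every long knot is isotopic to one in generic position, i.e.\ with a regular double--crossing projection onto the plane orthogonal to $N$, each component of $\K_N$ contains a representative on which $I$ and $c_2$ have already been shown to agree. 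Two invariants that agree on a representative of every component agree everywhere, which gives $I(K)=c_2(K)$ for all $K\in\K_N$.

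I do not anticipate a substantive obstacle, since the essential analytic work---constructing $I$, proving its invariance, and evaluating it on double--crossing diagrams---is already completed in the preceding sections and in Lemma~\ref{lem:c_2-arrow-regular-crossing}. The only point that genuinely needs care is the reduction in the second step: one must be able to choose the regular double--crossing representative \emph{within} $\K_N$, that is, with the knot tangent never parallel to $N$. This is precisely guaranteed by the coincidence of the path components of $\K_N$ and $\K$ together with the genericity of regular projections, so restricting to directions transverse to $N$ loses no isotopy class and the comparison of invariants goes through.
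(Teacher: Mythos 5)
Your proposal is correct and follows essentially the same route as the paper, which disposes of the corollary in a single sentence combining exactly your two ingredients: the evaluation of Lemma \ref{lem:c_2-arrow-regular-crossing} identified with the Polyak--Viro arrow diagram formula for $c_2$, and the invariance of $I$ from the first part of Theorem \ref{thm:c2-integral}. Your added care about choosing the double--crossing representative inside $\K_N$ is sound but automatic, since a knot admitting a regular projection onto the plane orthogonal to $N$ necessarily has tangent nowhere parallel to $N$ and hence already lies in $\K_N$.
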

\begin{proof}[Proof of Lemma \eqref{lem:c_2-arrow-regular-crossing}]
	Clearly, we may isotope $K$ so that it coincides with the image of its projection except at the double crossings, where we can create arbitrary ``small'' overpasses. 
	
	For the Gauss map
	\[
	 h =h_{13}\times h_{42}:\Cnf(\R;4)\longrightarrow S^2\times S^2,
	\]
	the inverse image of $(N,N)$ under $h$ is given as
	\[
	 h^{-1}(N,N)=\{(x_1,x_2,x_3,x_4)\ |\ \tfrac{K(x_3)-K(x_1)}{|K(x_3)-K(x_1)|}=N, \tfrac{K(x_2)-K(x_4)}{|K(x_2)-K(x_4)|}=N\}.
	\]
	Since the crossings are transverse double crossings, $(N,N)$ is the regular value and the local degree formula, reviewed at the beginning of this section yields
	\[
	\varint\limits_{\Cnf[\R;4]} h^\ast_{13}\eta_N\wedge h^\ast_{42}\eta_N=\langle \vvcenteredinclude{.3}{gd_c2_chord.pdf},G_K\rangle.
	\]
	\begin{rem}\label{rem:single-cross-contribution}
		Note that near the double crossings, configurations $(x_1,x_2,x_3,x_4)$  can easily produce arrows parallel to $N$, however, by the definition of  $h=h_{13}\times h_{42}$ these arrows can never both point in the direction of $N$, Figure \ref{fig:eta-support-double}. This is in contrast with the work in \cite[Proposition 4.3]{Lin-Wang:1996}, where such pairs of arrows are counted in the formula for $c_2(K)$.
	\end{rem}
	
	To finish the proof of \eqref{eq:I(K)-gauss-dbl-crossing}, we need to show 
	\begin{equation}\label{eq:tripod-vanish}
	 \varint\limits_{\Cnf[\K_N,\R^3;3,1]} h^\ast_{1{\bf 4}}\eta_N\wedge h^\ast_{{\bf 4}2}\eta_N\wedge h^\ast_{3{\bf 4}}\eta_N=0.
	\end{equation}
Since the support of $\eta_N$ is a small $\varepsilon$--ball around $N$; $x_i$, $\mathbf{x}_4$, $i=1,2,3$ need to satisfy\footnote{$\approx_\varepsilon$ stands for $\varepsilon$--close in the standard round metric on $S^2$}
\begin{equation}\label{eq:tripod-support-condition}
 \tfrac{\mathbf{x}_4-K(x_1)}{|\mathbf{x}_4-K(x_1)|}\approx_\varepsilon N,\qquad \tfrac{K(x_2)-\mathbf{x}_4}{|K(x_2)-\mathbf{x}_4|}\approx_\varepsilon N,\qquad \tfrac{\mathbf{x}_4-K(x_3)}{|\mathbf{x}_4-K(x_3)|}\approx_\varepsilon N,
\end{equation}
and therefore, the only possible configuration for $K(x_i)$, $\mathbf{x}_4$, $i=1$,$2$,$3$ occurs in the neighborhood of the double crossing as illustrated on Figure \ref{fig:eta-support-double}, where the arrows are all pointing into directions $\varepsilon$--close to $N$. However, for any $(x_1, x_2, x_3, \mathbf{x}_4) \in \Cnf(\K_N,\R^3;3,1)$ it forces $x_1$ to be close to $x_3$ and since $x_1 < x_2 < x_3$ no such configurations are possible. \qedhere
\end{proof}

\begin{figure}[h!]
	\centering
\begingroup%
  \makeatletter%
  \providecommand\color[2][]{%
    \errmessage{(Inkscape) Color is used for the text in Inkscape, but the package 'color.sty' is not loaded}%
    \renewcommand\color[2][]{}%
  }%
  \providecommand\transparent[1]{%
    \errmessage{(Inkscape) Transparency is used (non-zero) for the text in Inkscape, but the package 'transparent.sty' is not loaded}%
    \renewcommand\transparent[1]{}%
  }%
  \providecommand\rotatebox[2]{#2}%
  \newcommand*\fsize{\dimexpr\f@size pt\relax}%
  \newcommand*\lineheight[1]{\fontsize{\fsize}{#1\fsize}\selectfont}%
  \ifx\svgwidth\undefined%
    \setlength{\unitlength}{114.49733824bp}%
    \ifx\svgscale\undefined%
      \relax%
    \else%
      \setlength{\unitlength}{\unitlength * \real{\svgscale}}%
    \fi%
  \else%
    \setlength{\unitlength}{\svgwidth}%
  \fi%
  \global\let\svgwidth\undefined%
  \global\let\svgscale\undefined%
  \makeatother%
  \begin{picture}(1,0.78238477)%
    \lineheight{1}%
    \setlength\tabcolsep{0pt}%
    \put(0,0){\includegraphics[width=\unitlength,page=1]{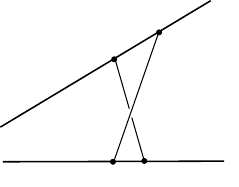}}%
    \put(0.23836678,0.01405602){\makebox(0,0)[lt]{\lineheight{1.25}\smash{\begin{tabular}[t]{l}$K(x_2)$\end{tabular}}}}%
    \put(0.1514191,0.560478){\makebox(0,0)[lt]{\lineheight{1.25}\smash{\begin{tabular}[t]{l}$K(x_3)$\end{tabular}}}}%
    \put(0.67394876,0.58420627){\makebox(0,0)[lt]{\lineheight{1.25}\smash{\begin{tabular}[t]{l}$K(x_4)$\end{tabular}}}}%
    \put(0.60122436,0.02067553){\makebox(0,0)[lt]{\lineheight{1.25}\smash{\begin{tabular}[t]{l}$K(x_1)$\end{tabular}}}}%
    \put(0,0){\includegraphics[width=\unitlength,page=2]{double-cross-parallel.pdf}}%
  \end{picture}%
\endgroup%
\qquad\qquad\qquad
\begingroup%
  \makeatletter%
  \providecommand\color[2][]{%
    \errmessage{(Inkscape) Color is used for the text in Inkscape, but the package 'color.sty' is not loaded}%
    \renewcommand\color[2][]{}%
  }%
  \providecommand\transparent[1]{%
    \errmessage{(Inkscape) Transparency is used (non-zero) for the text in Inkscape, but the package 'transparent.sty' is not loaded}%
    \renewcommand\transparent[1]{}%
  }%
  \providecommand\rotatebox[2]{#2}%
  \newcommand*\fsize{\dimexpr\f@size pt\relax}%
  \newcommand*\lineheight[1]{\fontsize{\fsize}{#1\fsize}\selectfont}%
  \ifx\svgwidth\undefined%
    \setlength{\unitlength}{151.7245976bp}%
    \ifx\svgscale\undefined%
      \relax%
    \else%
      \setlength{\unitlength}{\unitlength * \real{\svgscale}}%
    \fi%
  \else%
    \setlength{\unitlength}{\svgwidth}%
  \fi%
  \global\let\svgwidth\undefined%
  \global\let\svgscale\undefined%
  \makeatother%
  \begin{picture}(1,0.59307002)%
    \lineheight{1}%
    \setlength\tabcolsep{0pt}%
    \put(0,0){\includegraphics[width=\unitlength,page=1]{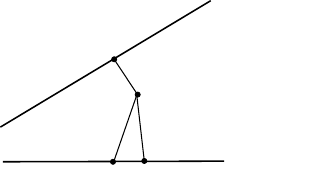}}%
    \put(0.18357211,0.4535075){\makebox(0,0)[lt]{\lineheight{1.25}\smash{\begin{tabular}[t]{l}$K(x_2)$\end{tabular}}}}%
    \put(0.17988093,0.01325898){\makebox(0,0)[lt]{\lineheight{1.25}\smash{\begin{tabular}[t]{l}$K(x_3)$\end{tabular}}}}%
    \put(0.45863978,0.01353658){\makebox(0,0)[lt]{\lineheight{1.25}\smash{\begin{tabular}[t]{l}$K(x_1)$\end{tabular}}}}%
    \put(0,0){\includegraphics[width=\unitlength,page=2]{double-cross-trip-2.pdf}}%
    \put(0.44875286,0.30612567){\color[rgb]{0,0,0}\makebox(0,0)[lt]{\lineheight{1.25}\smash{\begin{tabular}[t]{l}$\mathbf{x_4}$\end{tabular}}}}%
  \end{picture}%
\endgroup%

	\caption{(left) Configurations of chord arrows near a double crossing, clearly both arrows cannot be $\varepsilon$--close to  $N$. (right) Configurations of arrows near a double crossin $\varepsilon$--close to 
		$(N,N,N)$, clearly $K(x_3)$ cannot follow $K(x_1)$.}
	\label{fig:eta-support-double}
\end{figure}

 To justify $(iii)$ of Theorem \ref{thm:c2-integral}, consider a local regular knot isotopy 	$K:\R\times [0,1] \longrightarrow \R^3$, $(\,\cdot\,,z)\longrightarrow K(\,\cdot\,,z)\in \K$ realizing the {\bf RI} move in the projection onto the plane orthogonal to $N$. 
 Such isotopy is not in $\K_N$ but in $\K$, however the integral \eqref{eq:casson-integral-eta} can still be evaluated along such isotopy. First, the crossing in {\bf RI} move does not contribute to the value of $I(K)$ as already noted in Remark \ref{rem:single-cross-contribution}.
 Next, observe that the first term of \eqref{eq:casson-integral-eta} can only be nonzero for $x_1<x_2<x_3<x_4$ and $z$ satisfying
\begin{equation}\label{eq:RI-c2-requirement}
\tfrac{K(x_3,z)-K(x_1,z)}{|K(x_3,z)-K(x_1,z)|}\approx_\varepsilon N,\qquad \tfrac{K(x_2,z)-K(x_4,z)}{|K(x_2,z)-K(x_4,z)|}\approx_\varepsilon N.
\end{equation}
It is clear that values of $(x_1,x_2,x_3,x_4,z)$ associated to the pairing of the crossing in {\bf RI} move with any other crossing cannot satisfy both of these approximate identities together with the inequalities $x_1<x_2<x_3<x_4$. Indeed, the crossing in {\bf RI} move implies $\tfrac{K(x_i,z)-K(x_{i+1},z)}{|K(x_i,z)-K(x_{i+1},z)|}\approx_\varepsilon N$ or 
$\tfrac{K(x_{i+1},z)-K(x_{i},z)}{|K(x_{i+1},z)-K(x_{i},z)|}\approx_\varepsilon N$,
for $x_i<x_{i+1}$ which is inconsistent with  \eqref{eq:RI-c2-requirement}.
We conclude that the move does not contribute to the value of $I(K)$. \qed

\section{Proof of Theorem \ref{thm:c2-multicrossing}}\label{sec:arrow-counting-mltpl-crossing}

The remaining part is an extension of the formula \eqref{eq:I(K)-gauss-dbl-crossing} to knot diagrams with multiple crossings given in \eqref{eq:gd_c2_mult-crossings}:
\begin{equation}\label{eq:gd_c2_mult-crossings-2}
	\begin{split}
		c_2(K) & =\langle \vvcenteredinclude{.3}{gd_c2_chord.pdf},G_K\rangle+ \tfrac{1}{2}\bigl(\langle \vvcenteredinclude{.3}{gd_c2_chord-left.pdf},G_K\rangle\\
		&\qquad +\langle \vvcenteredinclude{.3}{gd_c2_chord-mid.pdf},G_K\rangle+\langle \vvcenteredinclude{.3}{gd_c2_chord-right.pdf},G_K\rangle\bigr).
	\end{split}
\end{equation}

	\begin{rem}
The second term in the above formula counts the contribution of the {\em tripod integral}:\\ $\varint\limits_{\Cnf[\R,\K_N;3,1]} h^\ast_{1\mathbf{4}}\eta_N\wedge h^\ast_{\mathbf{4}2}\eta_N\wedge h^\ast_{3\mathbf{4}}\eta_N$. Heuristically, the tripod integral counts configurations 
	$(x_1,x_2,x_3,\mathbf{x}_4)$ near triple crossings satisfying \eqref{eq:tripod-support-condition} (pictured\footnote{where the arrows point approximately in the direction of $N$.} in Figure \ref{fig:eta-support-triple}). Rather than counting such tripods, we can count corresponding pairs of crossings obtained by collapsing the arrow $\overrightarrow{\mathbf{x}_4\, K(x_{3\,\text{or}\,1})}$ or $\overrightarrow{K(x_2)\,\mathbf{x}_4}$. Collapsing either of these arrows results in exactly two of the three diagrams given in the second term; putting these together, in terms of the arrow diagrams, this yields the expression $\tfrac{1}{2}\bigl(\langle \vvcenteredinclude{.3}{gd_c2_chord-left.pdf},G_K\rangle$
	 $+\langle \vvcenteredinclude{.3}{gd_c2_chord-mid.pdf},G_K\rangle$ $+\langle \vvcenteredinclude{.3}{gd_c2_chord-right.pdf},G_K\rangle\bigr)$ and explains  $\tfrac{1}{2}$ factor in the formula.
	\begin{figure}[h!]
		\centering
		\includegraphics[width=0.25\textwidth]{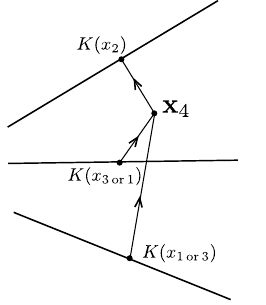}
		\caption{Configuration of points near the triple crossing in $\Cnf[\K_N,\R^3;3,1]$, whose image under $h'$ is $\varepsilon$--close to 
			$(N,N,N)$.}
		\label{fig:eta-support-triple}
	\end{figure}	
Since the exact evaluation of the tripod integral appears too technical, in the following, we provide a combinatorial argument justifying \eqref{eq:gd_c2_mult-crossings-2}. 
\end{rem} 

\begin{figure}[h!]
	\centering
	\includegraphics[width=\textwidth]{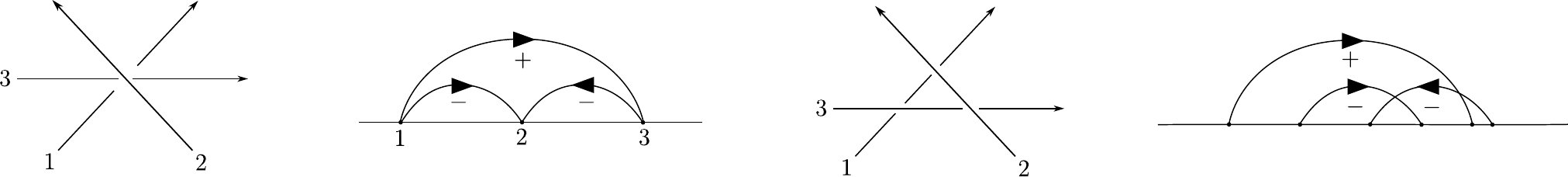}
	\caption{A triple crossing perturbation and corresponding fragments of the Gauss diagram.}
	\label{fig:triple-cross-perturbation}
\end{figure}
First, for simplicity we think of a long knot $K$ and its planar diagram as the same object (i.e. the parametrization of $K$ is almost entirely in the plane of the projection except for small overpasses). If $K$ is a (transverse) multi-crossing knot, we denote by $K'$ its local perturbation. Specifically, $K'$ has a regular double crossing diagram which agrees with $K$ outside small balls containing the multiple crossings of $K$, and is isotopic to $K$, Figure \ref{fig:triple-cross-perturbation}.
For that reason, we trivially observe that 
\begin{equation}\label{eq:c_2-via-K'}
 c_2(K)=\langle \vvcenteredinclude{.3}{gd_c2_chord.pdf},G_{K'}\rangle=\sum_{\phi':\vvcenteredinclude{.15}{gd_c2_chord.pdf}\longrightarrow G_{K'}} \operatorname{sign}(\phi').
\end{equation}
\no Clearly, the goal is to replace the sum over embeddings  
\begin{equation}\label{eq:embeddings_in_G_K'}
	\mathcal{E}(G_{K'}) =\{\phi':\vvcenteredinclude{.2}{gd_c2_chord.pdf}\longrightarrow G_{K'}\}
\end{equation} 
with a sum over embeddings 
into $G_K$. Denote by $e(G_K)$ and $e(G_{K'})$ the set of oriented and signed chords of $G_K$ and $G'_K$ respectively, each embedding $\phi'\in \mathcal{E}(G_{K'})$ can be identified with a choice of the pair of distinct chords from $e(G_{K'})$, i.e. $\phi'=\{\alpha',\beta'\}$, $\alpha',\beta'\in e(G_{K'})$, satisfying the obvious condition on their endpoints. Therefore, the sum in \eqref{eq:c_2-via-K'} can be written as a sum over unordered pairs of chords 
\[
\mathcal{E}^2(G_{K'})  =\{\{\alpha',\beta'\}\ |\ \alpha',\beta'\in e(G_{K'}), \alpha'\neq\beta'\},\\
\]
as follows 
\begin{equation}\label{eq:c_2-via-K'-chords}
\langle \vvcenteredinclude{.3}{gd_c2_chord.pdf},G_{K'}\rangle=\sum_{\{\alpha',\beta'\}\in \mathcal{E}^2(G_{K'})} f_{\vvcenteredinclude{.1}{gd_c2_chord.pdf}}(\alpha',\beta'),
\end{equation}
where $f_{\vvcenteredinclude{.1}{gd_c2_chord.pdf}}$ is an obvious function (valued in $\{0,\pm 1\}$).
Next, observe that, by construction of $K'$, we have a bijection 
\[
 F: e(G_{K'})\longrightarrow e(G_K),
\]
which preserves the orientation of chords and their signs. Further, for each $\phi'=\{\alpha',\beta'\}\in \mathcal{E}(G_{K'})$, we will use notation 
\[
\phi=\{\alpha,\beta\}=F\circ \phi'=\{F(\alpha'),F(\beta')\},
\]
 under the above identification of pairs of chords and embeddings. 

 There are two cases of interest for such $\phi$: first $\alpha$ and $\beta$ do not share any endpoints then $\phi$ embeds $\vvcenteredinclude{.2}{gd_c2_chord.pdf}$ into $G_K$ (i.e. $\phi$ is a {\em regular} embedding), second $\alpha$ and $\beta$ share an endpoint, then $\phi$ embeds one of  
\begin{equation}\label{eq:diag-common-ends}
\vvcenteredinclude{.3}{gd_c2_chord-left.pdf},\quad \vvcenteredinclude{.3}{gd_c2_chord-mid.pdf},\quad
\vvcenteredinclude{.3}{gd_c2_chord-right.pdf}
\end{equation}
 into $G_K$. In the view, of the above correspondence between the embeddings and the pairs of chords, let us define the following subsets of pairs of chords partitioning $e(G_{K'})^2$:
 \begin{equation}\label{eq:E^2-bullet}
 \begin{split}
  \mathcal{E}^2_\bullet(G_{K'}) & =\{\{\alpha',\beta'\}\in \mathcal{E}^2(G_{K'})\ |\ \alpha\ \text{and}\ \beta\ \text{share an endpoint}\},\\
  \mathcal{E}^2_\circ(G_{K'}) & = \{\{\alpha',\beta'\}\in \mathcal{E}^2(G_{K'})\ |\ \alpha\ \text{and}\ \beta\ \text{do not share an endpoint}\},
 \end{split}
 \end{equation}
 the sum in \eqref{eq:c_2-via-K'-chords} splits accordingly as 
 \begin{equation}\label{eq:c_2-via-K'-chords-2}
 	\langle \vvcenteredinclude{.3}{gd_c2_chord.pdf},G_{K'}\rangle=\sum_{\{\alpha',\beta'\}\in \mathcal{E}^2_\circ(G_{K'})} f_{\vvcenteredinclude{.1}{gd_c2_chord.pdf}}(\alpha',\beta')+\sum_{\{\alpha',\beta'\}\in \mathcal{E}^2_\bullet(G_{K'})} f_{\vvcenteredinclude{.1}{gd_c2_chord.pdf}}(\alpha',\beta').
 \end{equation}
 Note that the first term in this sum equals (by definition)
 \begin{equation}\label{eq:sum-E^2-circ}
 \sum_{\{\alpha',\beta'\}\in \mathcal{E}^2_\circ(G_{K'})} f_{\vvcenteredinclude{.1}{gd_c2_chord.pdf}}(\alpha',\beta')=\langle \vvcenteredinclude{.3}{gd_c2_chord.pdf},G_{K}\rangle.
 \end{equation}
 \begin{rem} 
 	The expression $\langle \vvcenteredinclude{.25}{gd_c2_chord.pdf}+  \vvcenteredinclude{.25}{gd_c2_chord-left.pdf}+ \vvcenteredinclude{.25}{gd_c2_chord-mid.pdf}+\vvcenteredinclude{.25}{gd_c2_chord-right.pdf},G_K\rangle$ does not yield a correct count for $c_2(K)$, 
 because there is no one--to--one correspondence between the embeddings of  diagrams with common endpoints listed in \eqref{eq:diag-common-ends} in $G_K$, with a subset of $\mathcal{E}(G_{K'})$ in \eqref{eq:embeddings_in_G_K'}, preserving signs and orientations, see Example \eqref{eq:trefoil-computation}. 
 \end{rem}
 \begin{table}[ht]
	\caption{Verifying \eqref{eq:triple-cross-count} on triple--crossings.}
	\begin{tabular}{|c|c|c|c|}
		
		\hline
		Triple Crossing & Arrow diagrams contributing to \eqref{eq:triple-cross-count} & Perturbed Crossing & \eqref{eq:c_2-via-K'} \\ \hline
		
		\scalebox{0.3}{\includegraphics{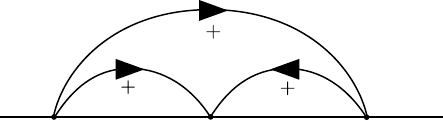}} & \scalebox{0.3}{
\begingroup%
  \makeatletter%
  \providecommand\color[2][]{%
    \errmessage{(Inkscape) Color is used for the text in Inkscape, but the package 'color.sty' is not loaded}%
    \renewcommand\color[2][]{}%
  }%
  \providecommand\transparent[1]{%
    \errmessage{(Inkscape) Transparency is used (non-zero) for the text in Inkscape, but the package 'transparent.sty' is not loaded}%
    \renewcommand\transparent[1]{}%
  }%
  \providecommand\rotatebox[2]{#2}%
  \newcommand*\fsize{\dimexpr\f@size pt\relax}%
  \newcommand*\lineheight[1]{\fontsize{\fsize}{#1\fsize}\selectfont}%
  \ifx\svgwidth\undefined%
    \setlength{\unitlength}{212.5984252bp}%
    \ifx\svgscale\undefined%
      \relax%
    \else%
      \setlength{\unitlength}{\unitlength * \real{\svgscale}}%
    \fi%
  \else%
    \setlength{\unitlength}{\svgwidth}%
  \fi%
  \global\let\svgwidth\undefined%
  \global\let\svgscale\undefined%
  \makeatother%
  \begin{picture}(1,0.27697906)%
    \lineheight{1}%
    \setlength\tabcolsep{0pt}%
    \put(-0.25958258,-0.28324667){\color[rgb]{0,0,0}\makebox(0,0)[lt]{\begin{minipage}{0.06592648\unitlength}\raggedright \end{minipage}}}%
    \put(0.09102822,0.06161557){\color[rgb]{0,0,0}\makebox(0,0)[lt]{\begin{minipage}{0.11994444\unitlength}\end{minipage}}}%
    \put(0.23035288,0.37492507){\color[rgb]{0,0,0}\makebox(0,0)[lt]{\begin{minipage}{0.21085618\unitlength}\end{minipage}}}%
    \put(0,0){\includegraphics[width=\unitlength,page=1]{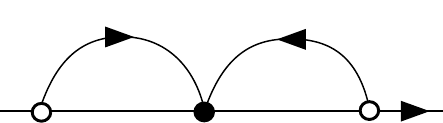}}%
    \put(0.23539374,0.11297076){\color[rgb]{0,0,0}\makebox(0,0)[lt]{\lineheight{1.25}\smash{\begin{tabular}[t]{l}$+$\end{tabular}}}}%
    \put(0.63912726,0.11297076){\color[rgb]{0,0,0}\makebox(0,0)[lt]{\lineheight{1.25}\smash{\begin{tabular}[t]{l}$+$\end{tabular}}}}%
  \end{picture}%
\endgroup%
} $+$ \scalebox{0.3}{
\begingroup%
  \makeatletter%
  \providecommand\color[2][]{%
    \errmessage{(Inkscape) Color is used for the text in Inkscape, but the package 'color.sty' is not loaded}%
    \renewcommand\color[2][]{}%
  }%
  \providecommand\transparent[1]{%
    \errmessage{(Inkscape) Transparency is used (non-zero) for the text in Inkscape, but the package 'transparent.sty' is not loaded}%
    \renewcommand\transparent[1]{}%
  }%
  \providecommand\rotatebox[2]{#2}%
  \newcommand*\fsize{\dimexpr\f@size pt\relax}%
  \newcommand*\lineheight[1]{\fontsize{\fsize}{#1\fsize}\selectfont}%
  \ifx\svgwidth\undefined%
    \setlength{\unitlength}{212.5984252bp}%
    \ifx\svgscale\undefined%
      \relax%
    \else%
      \setlength{\unitlength}{\unitlength * \real{\svgscale}}%
    \fi%
  \else%
    \setlength{\unitlength}{\svgwidth}%
  \fi%
  \global\let\svgwidth\undefined%
  \global\let\svgscale\undefined%
  \makeatother%
  \begin{picture}(1,0.27697906)%
    \lineheight{1}%
    \setlength\tabcolsep{0pt}%
    \put(0,0){\includegraphics[width=\unitlength,page=1]{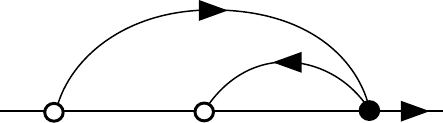}}%
    \put(-0.25958258,-0.28324667){\color[rgb]{0,0,0}\makebox(0,0)[lt]{\begin{minipage}{0.06592648\unitlength}\raggedright \end{minipage}}}%
    \put(0.45413611,0.17226906){\color[rgb]{0,0,0}\makebox(0,0)[lt]{\lineheight{1.25}\smash{\begin{tabular}[t]{l}$+$\end{tabular}}}}%
    \put(0.6234689,0.05938169){\color[rgb]{0,0,0}\makebox(0,0)[lt]{\lineheight{1.25}\smash{\begin{tabular}[t]{l}$+$\end{tabular}}}}%
  \end{picture}%
\endgroup%
} $\rightsquigarrow 2$ & \scalebox{0.3}{\includegraphics{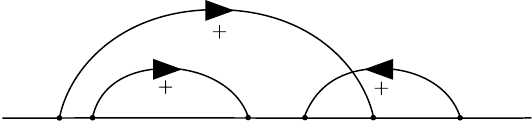}}  \textrm{or} \scalebox{0.3}{\includegraphics{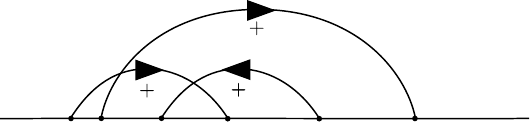}}  &   1  \\ 
		
		\scalebox{0.3}{\includegraphics{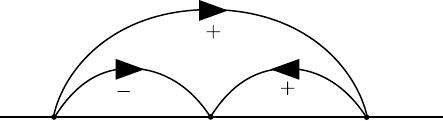}} & \scalebox{0.3}{
\begingroup%
  \makeatletter%
  \providecommand\color[2][]{%
    \errmessage{(Inkscape) Color is used for the text in Inkscape, but the package 'color.sty' is not loaded}%
    \renewcommand\color[2][]{}%
  }%
  \providecommand\transparent[1]{%
    \errmessage{(Inkscape) Transparency is used (non-zero) for the text in Inkscape, but the package 'transparent.sty' is not loaded}%
    \renewcommand\transparent[1]{}%
  }%
  \providecommand\rotatebox[2]{#2}%
  \newcommand*\fsize{\dimexpr\f@size pt\relax}%
  \newcommand*\lineheight[1]{\fontsize{\fsize}{#1\fsize}\selectfont}%
  \ifx\svgwidth\undefined%
    \setlength{\unitlength}{212.5984252bp}%
    \ifx\svgscale\undefined%
      \relax%
    \else%
      \setlength{\unitlength}{\unitlength * \real{\svgscale}}%
    \fi%
  \else%
    \setlength{\unitlength}{\svgwidth}%
  \fi%
  \global\let\svgwidth\undefined%
  \global\let\svgscale\undefined%
  \makeatother%
  \begin{picture}(1,0.27697906)%
    \lineheight{1}%
    \setlength\tabcolsep{0pt}%
    \put(-0.25958258,-0.28324667){\color[rgb]{0,0,0}\makebox(0,0)[lt]{\begin{minipage}{0.06592648\unitlength}\raggedright \end{minipage}}}%
    \put(0.09102822,0.06161557){\color[rgb]{0,0,0}\makebox(0,0)[lt]{\begin{minipage}{0.11994444\unitlength}\end{minipage}}}%
    \put(0.23035288,0.37492507){\color[rgb]{0,0,0}\makebox(0,0)[lt]{\begin{minipage}{0.21085618\unitlength}\end{minipage}}}%
    \put(0,0){\includegraphics[width=\unitlength,page=1]{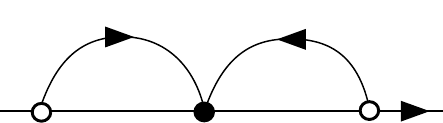}}%
    \put(0.23539374,0.11297076){\color[rgb]{0,0,0}\makebox(0,0)[lt]{\lineheight{1.25}\smash{\begin{tabular}[t]{l}$-$\end{tabular}}}}%
    \put(0.63912726,0.11297076){\color[rgb]{0,0,0}\makebox(0,0)[lt]{\lineheight{1.25}\smash{\begin{tabular}[t]{l}$+$\end{tabular}}}}%
  \end{picture}%
\endgroup%
} $+$ \scalebox{0.3}{ } $\rightsquigarrow 0$ & \scalebox{0.3}{\includegraphics{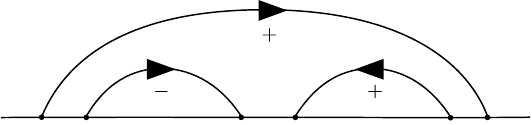}}  \textrm{or} \scalebox{0.3}{\includegraphics{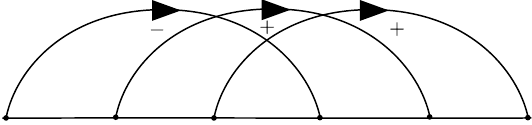}} &  0 \\ 
		
		\scalebox{0.3}{\includegraphics{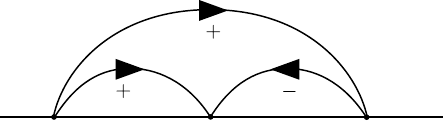}} & \scalebox{0.3}{
\begingroup%
  \makeatletter%
  \providecommand\color[2][]{%
    \errmessage{(Inkscape) Color is used for the text in Inkscape, but the package 'color.sty' is not loaded}%
    \renewcommand\color[2][]{}%
  }%
  \providecommand\transparent[1]{%
    \errmessage{(Inkscape) Transparency is used (non-zero) for the text in Inkscape, but the package 'transparent.sty' is not loaded}%
    \renewcommand\transparent[1]{}%
  }%
  \providecommand\rotatebox[2]{#2}%
  \newcommand*\fsize{\dimexpr\f@size pt\relax}%
  \newcommand*\lineheight[1]{\fontsize{\fsize}{#1\fsize}\selectfont}%
  \ifx\svgwidth\undefined%
    \setlength{\unitlength}{212.5984252bp}%
    \ifx\svgscale\undefined%
      \relax%
    \else%
      \setlength{\unitlength}{\unitlength * \real{\svgscale}}%
    \fi%
  \else%
    \setlength{\unitlength}{\svgwidth}%
  \fi%
  \global\let\svgwidth\undefined%
  \global\let\svgscale\undefined%
  \makeatother%
  \begin{picture}(1,0.27697906)%
    \lineheight{1}%
    \setlength\tabcolsep{0pt}%
    \put(-0.25958258,-0.28324667){\color[rgb]{0,0,0}\makebox(0,0)[lt]{\begin{minipage}{0.06592648\unitlength}\raggedright \end{minipage}}}%
    \put(0.09102822,0.06161557){\color[rgb]{0,0,0}\makebox(0,0)[lt]{\begin{minipage}{0.11994444\unitlength}\end{minipage}}}%
    \put(0.23035288,0.37492507){\color[rgb]{0,0,0}\makebox(0,0)[lt]{\begin{minipage}{0.21085618\unitlength}\end{minipage}}}%
    \put(0,0){\includegraphics[width=\unitlength,page=1]{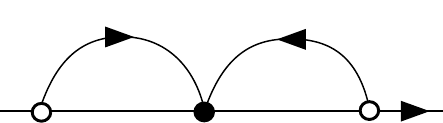}}%
    \put(0.23539374,0.11297076){\color[rgb]{0,0,0}\makebox(0,0)[lt]{\lineheight{1.25}\smash{\begin{tabular}[t]{l}$+$\end{tabular}}}}%
    \put(0.63912726,0.11297076){\color[rgb]{0,0,0}\makebox(0,0)[lt]{\lineheight{1.25}\smash{\begin{tabular}[t]{l}$-$\end{tabular}}}}%
  \end{picture}%
\endgroup%
} $+$ \scalebox{0.3}{
\begingroup%
  \makeatletter%
  \providecommand\color[2][]{%
    \errmessage{(Inkscape) Color is used for the text in Inkscape, but the package 'color.sty' is not loaded}%
    \renewcommand\color[2][]{}%
  }%
  \providecommand\transparent[1]{%
    \errmessage{(Inkscape) Transparency is used (non-zero) for the text in Inkscape, but the package 'transparent.sty' is not loaded}%
    \renewcommand\transparent[1]{}%
  }%
  \providecommand\rotatebox[2]{#2}%
  \newcommand*\fsize{\dimexpr\f@size pt\relax}%
  \newcommand*\lineheight[1]{\fontsize{\fsize}{#1\fsize}\selectfont}%
  \ifx\svgwidth\undefined%
    \setlength{\unitlength}{212.5984252bp}%
    \ifx\svgscale\undefined%
      \relax%
    \else%
      \setlength{\unitlength}{\unitlength * \real{\svgscale}}%
    \fi%
  \else%
    \setlength{\unitlength}{\svgwidth}%
  \fi%
  \global\let\svgwidth\undefined%
  \global\let\svgscale\undefined%
  \makeatother%
  \begin{picture}(1,0.27697906)%
    \lineheight{1}%
    \setlength\tabcolsep{0pt}%
    \put(0,0){\includegraphics[width=\unitlength,page=1]{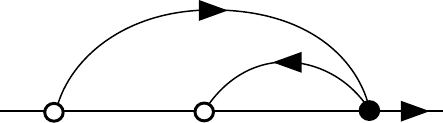}}%
    \put(-0.25958258,-0.28324667){\color[rgb]{0,0,0}\makebox(0,0)[lt]{\begin{minipage}{0.06592648\unitlength}\raggedright \end{minipage}}}%
    \put(0.45413611,0.17226906){\color[rgb]{0,0,0}\makebox(0,0)[lt]{\lineheight{1.25}\smash{\begin{tabular}[t]{l}$+$\end{tabular}}}}%
    \put(0.6234689,0.05938169){\color[rgb]{0,0,0}\makebox(0,0)[lt]{\lineheight{1.25}\smash{\begin{tabular}[t]{l}$-$\end{tabular}}}}%
  \end{picture}%
\endgroup%
 } $\rightsquigarrow -2$ & \scalebox{0.3}{\includegraphics{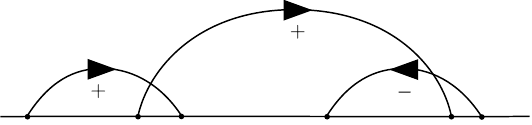}}  \textrm{or}  \scalebox{0.3}{\includegraphics{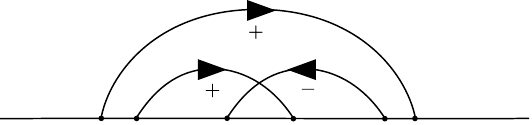}} &  -1 \\ 
		
		\scalebox{0.3}{\includegraphics{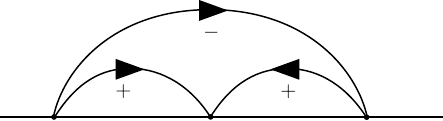}} & \scalebox{0.3}{} $+$ \scalebox{0.3}{
\begingroup%
  \makeatletter%
  \providecommand\color[2][]{%
    \errmessage{(Inkscape) Color is used for the text in Inkscape, but the package 'color.sty' is not loaded}%
    \renewcommand\color[2][]{}%
  }%
  \providecommand\transparent[1]{%
    \errmessage{(Inkscape) Transparency is used (non-zero) for the text in Inkscape, but the package 'transparent.sty' is not loaded}%
    \renewcommand\transparent[1]{}%
  }%
  \providecommand\rotatebox[2]{#2}%
  \newcommand*\fsize{\dimexpr\f@size pt\relax}%
  \newcommand*\lineheight[1]{\fontsize{\fsize}{#1\fsize}\selectfont}%
  \ifx\svgwidth\undefined%
    \setlength{\unitlength}{212.5984252bp}%
    \ifx\svgscale\undefined%
      \relax%
    \else%
      \setlength{\unitlength}{\unitlength * \real{\svgscale}}%
    \fi%
  \else%
    \setlength{\unitlength}{\svgwidth}%
  \fi%
  \global\let\svgwidth\undefined%
  \global\let\svgscale\undefined%
  \makeatother%
  \begin{picture}(1,0.27697906)%
    \lineheight{1}%
    \setlength\tabcolsep{0pt}%
    \put(0,0){\includegraphics[width=\unitlength,page=1]{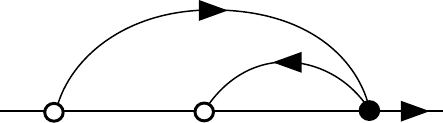}}%
    \put(-0.25958258,-0.28324667){\color[rgb]{0,0,0}\makebox(0,0)[lt]{\begin{minipage}{0.06592648\unitlength}\raggedright \end{minipage}}}%
    \put(0.45413611,0.17226906){\color[rgb]{0,0,0}\makebox(0,0)[lt]{\lineheight{1.25}\smash{\begin{tabular}[t]{l}$-$\end{tabular}}}}%
    \put(0.6234689,0.05938169){\color[rgb]{0,0,0}\makebox(0,0)[lt]{\lineheight{1.25}\smash{\begin{tabular}[t]{l}$+$\end{tabular}}}}%
  \end{picture}%
\endgroup%
 } $\rightsquigarrow 0$ & \scalebox{0.3}{\includegraphics{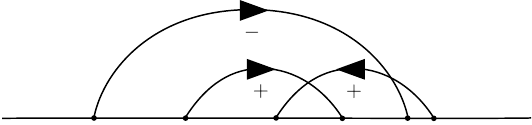}}  \textrm{or}  \scalebox{0.3}{\includegraphics{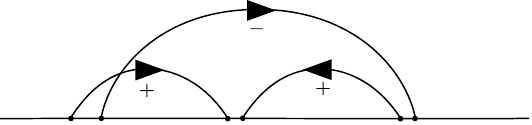}} &0 \\ 
		
		\scalebox{0.3}{\includegraphics{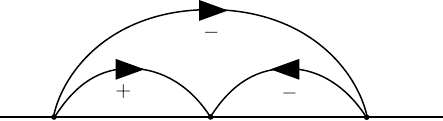}} & \scalebox{0.3}{} $+$ \scalebox{0.3}{
\begingroup%
  \makeatletter%
  \providecommand\color[2][]{%
    \errmessage{(Inkscape) Color is used for the text in Inkscape, but the package 'color.sty' is not loaded}%
    \renewcommand\color[2][]{}%
  }%
  \providecommand\transparent[1]{%
    \errmessage{(Inkscape) Transparency is used (non-zero) for the text in Inkscape, but the package 'transparent.sty' is not loaded}%
    \renewcommand\transparent[1]{}%
  }%
  \providecommand\rotatebox[2]{#2}%
  \newcommand*\fsize{\dimexpr\f@size pt\relax}%
  \newcommand*\lineheight[1]{\fontsize{\fsize}{#1\fsize}\selectfont}%
  \ifx\svgwidth\undefined%
    \setlength{\unitlength}{212.5984252bp}%
    \ifx\svgscale\undefined%
      \relax%
    \else%
      \setlength{\unitlength}{\unitlength * \real{\svgscale}}%
    \fi%
  \else%
    \setlength{\unitlength}{\svgwidth}%
  \fi%
  \global\let\svgwidth\undefined%
  \global\let\svgscale\undefined%
  \makeatother%
  \begin{picture}(1,0.27697906)%
    \lineheight{1}%
    \setlength\tabcolsep{0pt}%
    \put(0,0){\includegraphics[width=\unitlength,page=1]{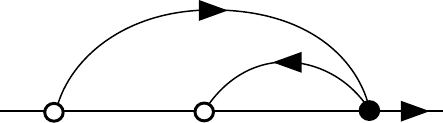}}%
    \put(-0.25958258,-0.28324667){\color[rgb]{0,0,0}\makebox(0,0)[lt]{\begin{minipage}{0.06592648\unitlength}\raggedright \end{minipage}}}%
    \put(0.45413611,0.17226906){\color[rgb]{0,0,0}\makebox(0,0)[lt]{\lineheight{1.25}\smash{\begin{tabular}[t]{l}$-$\end{tabular}}}}%
    \put(0.6234689,0.05938169){\color[rgb]{0,0,0}\makebox(0,0)[lt]{\lineheight{1.25}\smash{\begin{tabular}[t]{l}$-$\end{tabular}}}}%
  \end{picture}%
\endgroup%
 } $\rightsquigarrow 0$ & \scalebox{0.3}{\includegraphics{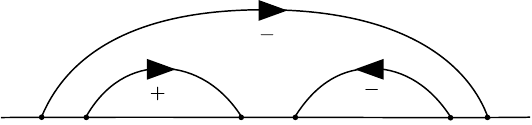}}  \textrm{or} \scalebox{0.3}{\includegraphics{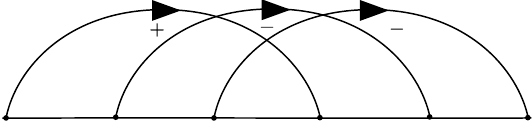}} &  0 \\ 
		
		\scalebox{0.3}{\includegraphics{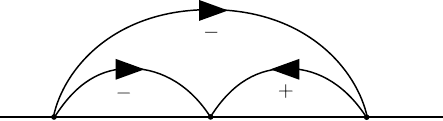}} & \scalebox{0.3}{} $+$ \scalebox{0.3}{ } $\rightsquigarrow -2$ & \scalebox{0.3}{\includegraphics{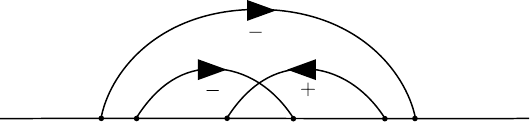}}  \textrm{or} \scalebox{0.3}{\includegraphics{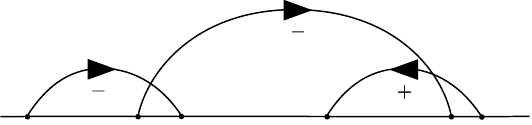}} & -1 \\ 
		
		\scalebox{0.3}{\includegraphics{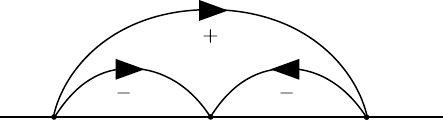}} & \scalebox{0.3}{
\begingroup%
  \makeatletter%
  \providecommand\color[2][]{%
    \errmessage{(Inkscape) Color is used for the text in Inkscape, but the package 'color.sty' is not loaded}%
    \renewcommand\color[2][]{}%
  }%
  \providecommand\transparent[1]{%
    \errmessage{(Inkscape) Transparency is used (non-zero) for the text in Inkscape, but the package 'transparent.sty' is not loaded}%
    \renewcommand\transparent[1]{}%
  }%
  \providecommand\rotatebox[2]{#2}%
  \newcommand*\fsize{\dimexpr\f@size pt\relax}%
  \newcommand*\lineheight[1]{\fontsize{\fsize}{#1\fsize}\selectfont}%
  \ifx\svgwidth\undefined%
    \setlength{\unitlength}{212.5984252bp}%
    \ifx\svgscale\undefined%
      \relax%
    \else%
      \setlength{\unitlength}{\unitlength * \real{\svgscale}}%
    \fi%
  \else%
    \setlength{\unitlength}{\svgwidth}%
  \fi%
  \global\let\svgwidth\undefined%
  \global\let\svgscale\undefined%
  \makeatother%
  \begin{picture}(1,0.27697906)%
    \lineheight{1}%
    \setlength\tabcolsep{0pt}%
    \put(-0.25958258,-0.28324667){\color[rgb]{0,0,0}\makebox(0,0)[lt]{\begin{minipage}{0.06592648\unitlength}\raggedright \end{minipage}}}%
    \put(0.09102822,0.06161557){\color[rgb]{0,0,0}\makebox(0,0)[lt]{\begin{minipage}{0.11994444\unitlength}\end{minipage}}}%
    \put(0.23035288,0.37492507){\color[rgb]{0,0,0}\makebox(0,0)[lt]{\begin{minipage}{0.21085618\unitlength}\end{minipage}}}%
    \put(0,0){\includegraphics[width=\unitlength,page=1]{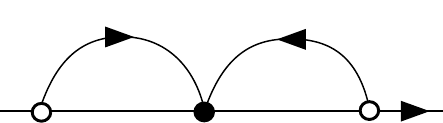}}%
    \put(0.23539374,0.11297076){\color[rgb]{0,0,0}\makebox(0,0)[lt]{\lineheight{1.25}\smash{\begin{tabular}[t]{l}$-$\end{tabular}}}}%
    \put(0.63912726,0.11297076){\color[rgb]{0,0,0}\makebox(0,0)[lt]{\lineheight{1.25}\smash{\begin{tabular}[t]{l}$-$\end{tabular}}}}%
  \end{picture}%
\endgroup%
} $+$ \scalebox{0.3}{ } $\rightsquigarrow 0$ & \scalebox{0.3}{\includegraphics{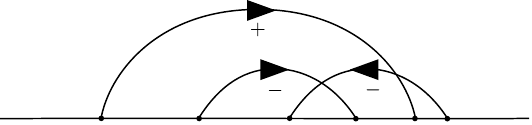}}  \textrm{or}  \scalebox{0.3}{\includegraphics{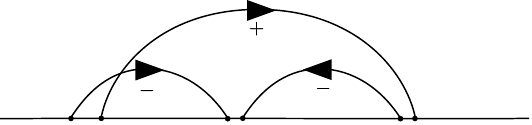}} & 0 \\ 
		
		\scalebox{0.3}{\includegraphics{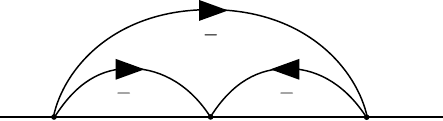}} & \scalebox{0.3}{} $+$ \scalebox{0.3}{ } $\rightsquigarrow 2$ & \scalebox{0.3}{\includegraphics{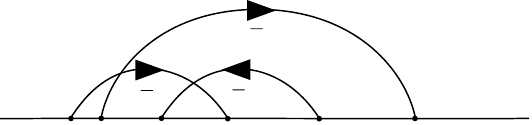}}  \textrm{or}  \scalebox{0.3}{\includegraphics{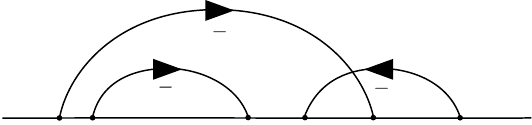}} & 1  \\ \hline
	\end{tabular}\label{tb:triplecrossing} 	 
	
\end{table}
 In order to find an expression for the second term in \eqref{eq:c_2-via-K'-chords-2}, note that for every triple of distinct chords $\alpha',\beta',\gamma'\in e(G_{K'})$, either $\{\alpha,\beta,\gamma\}=F\{\alpha',\beta',\gamma'\}$ form a triple crossing\footnote{where the labels $\alpha$, $\beta$, $\gamma$ can be permuted and arrow directions with signs assigned freely.} $T=\vvcenteredinclude{.3}{triangle-gd-abg.pdf}$ in $G_K$ or not. For each triple crossing the set of pairs $\{\alpha,\beta\}$, $\{\alpha,\gamma\}$, $\{\gamma,\beta\}$ uniquely determine the triple--crossings $T$, i.e. none of the pairs can be in a different triple--crossing. Also every pair $\{\alpha,\beta\}$ in $\mathcal{E}^2_\bullet(G_{K'})$ is a part of a unique triple--crossing. As a result the set $\mathcal{E}^2_\bullet(G_{K'})$ partitions into three elements subsets $\{\alpha',\beta'\}$, $\{\alpha',\gamma'\}$, $\{\gamma',\beta'\}$ which are pairs such that the corresponding $\{\alpha,\beta\}=F\{\alpha',\beta'\}$, $\{\alpha,\gamma\}=F\{\alpha',\gamma'\}$, $\{\gamma,\beta\}=F\{\gamma',\beta'\}$ constitute a triple--crossing. Denoting the set of ``triple crossings'' by $\mathcal{T}(G_{K'})$, we obtain
 \begin{equation}\label{eq:partitioned-sum}
 \sum_{\{\alpha',\beta'\}\in \mathcal{E}^2_\bullet(G_{K'})} f_{\vvcenteredinclude{.1}{gd_c2_chord.pdf}}(\alpha',\beta')= \sum_{\{\alpha',\beta',\gamma'\}\in \mathcal{T}(G_{K'})} \bigl(f_{\vvcenteredinclude{.1}{gd_c2_chord.pdf}}(\alpha',\beta')+f_{\vvcenteredinclude{.1}{gd_c2_chord.pdf}}(\alpha',\gamma')+f_{\vvcenteredinclude{.1}{gd_c2_chord.pdf}}(\gamma',\beta')\bigr).
 \end{equation}
 Table \ref{tb:triplecrossing} shows that for each $\{\alpha',\beta',\gamma'\}\in \mathcal{T}(G_{K'})$ the corresponding term in the above sum equals:
 \begin{equation}\label{eq:triple-cross-count}
 \begin{split}
   & f_{\vvcenteredinclude{.1}{gd_c2_chord.pdf}}(\alpha',\beta')+f_{\vvcenteredinclude{.1}{gd_c2_chord.pdf}}(\alpha',\gamma')+f_{\vvcenteredinclude{.1}{gd_c2_chord.pdf}}(\gamma',\beta')=\\
   & \qquad\tfrac{1}{2}\bigl\langle \vvcenteredinclude{.3}{gd_c2_chord-left.pdf}+ \vvcenteredinclude{.3}{gd_c2_chord-mid.pdf}+\vvcenteredinclude{.3}{gd_c2_chord-right.pdf}, G_K\bigr\rangle.
  \end{split}
 \end{equation} 

 The table shows all cases with the top arrow pointing right $\vvcenteredinclude{.3}{triangle-tab-right-cases.pdf}$, all cases with the top arrow pointing left $\vvcenteredinclude{.3}{triangle-tab-left-cases.pdf}$ is obtained analogously by replacing diagrams $\vvcenteredinclude{.3}{gd_c2_chord-left-tab.pdf}$ in the second column with $\vvcenteredinclude{.3}{gd_c2_chord-right-tab.pdf}$. Note that   
  $\vvcenteredinclude{.3}{triangle-gd-right-invalid.pdf}$ and $\vvcenteredinclude{.3}{triangle-gd-left-invalid.pdf}$ are invalid subdiagrams of $G_K$, thus they do not need consideration.   For each triple crossing listed in the table, there are only 2 possible crossing diagrams that can be obtained by perturbation.  Although these perturbed crossing diagrams may differ for a given triple crossing, the contribution to the count in \eqref{eq:c_2-via-K'} (listed in the last column of Table \ref{tb:triplecrossing}) by either perturbed crossing is the same.  
 
 Further, the entire sum in \eqref{eq:partitioned-sum} can be replaced with 
 \[
  \tfrac{1}{2}\bigl\langle \vvcenteredinclude{.3}{gd_c2_chord-left.pdf}+ \vvcenteredinclude{.3}{gd_c2_chord-mid.pdf}+\vvcenteredinclude{.3}{gd_c2_chord-right.pdf}, G_K\bigr\rangle,
 \]
 because each pair $\{\rho',\xi'\}\in \mathcal{E}^2_\bullet(G_{K'})$ is a part of a unique triple--crossing $\{\rho,\xi,\psi\}$ in $G_K$. From \eqref{eq:c_2-via-K'-chords}, \eqref{eq:c_2-via-K'-chords-2},  \eqref{eq:sum-E^2-circ} and  \eqref{eq:partitioned-sum} the formula \eqref{eq:gd_c2_mult-crossings-2} follows.
\qedsymbol{}


\section{Computing $c_2$ from a petal diagram}\label{sec:petal-diagrams}

Any knot $K$ has an {\em {\"u}bercrossing projection}, a projection which only has one crossing (called the  {\"u}bercrossing) and for which each strand bisects this crossing. The concept was coined in \cite{Adams:2012} by Adams et al, showing that every knot has a {\em petal diagram} - an {\"u}bercrossing projection which no loop is nested in the interior of another, see also \cite{Adams:2013,Adams:2015, Adams:2019, Colton:2019, Even-Zohar:2018}.

Given a petal diagram $P$ of a oriented knot $K$ with $n$ loops, follow $P$ in the direction of the orientation, starting with the top strand of the  {\"u}bercrossing.  By recording the order in which the levels of the  {\"u}bercrossing are traversed, one obtains a permutation of $1,\dots,n$ which begins with 1.  Together, with the orientation, this permutation completely identifies $K$.  Although the permutation is not necessarily unique, we may use it to compute the multiple-crossing Gauss diagram $G_P$ for $P$, and from $G_P$ compute $c_2(K)$.  We describe this process below for an oriented knot whose petal diagram is oriented counterclockwise; in the case that the petal diagram is oriented clockwise, small changes to the process of determining the associated long knot and determining the signs of the chords in the multi-crossing yield a similar result.

Let $K$ be an oriented knot $K$, whose petal diagram $P$ has $n$ loops, is oriented counterclockwise, and has associated permutation $(1, a_2, a_3,\dots, a_n)$.  
Consider the projection $P$ to sit within the cube $-1\leq x,y,z\leq1$ in $\R^3$.  $P$ can be transformed into a long knot diagram with one multi-crossing by cutting the petal labeled with $1$ and $a_2$.   Deform\footnote{If $P$ is oriented clockwise, reverse this. The associated multiple crossing Gauss diagram will have $n$ common endpoints, labeled, in order from left to right, as $1,a_2,a_{3},\dots,a_n$} $P$ so that the severed endpoint of strand $1$ is sent to $(\infty,0,0)$ and the severed end point of strand $a_2$ is sent to  $(-\infty,0,0)$.  A multiple crossing Gauss diagram for this long knot has $n$ common endpoints, labeled, in order from left to right, as $a_2,a_{3},\dots,a_n,1$.

\begin{figure}[ht]
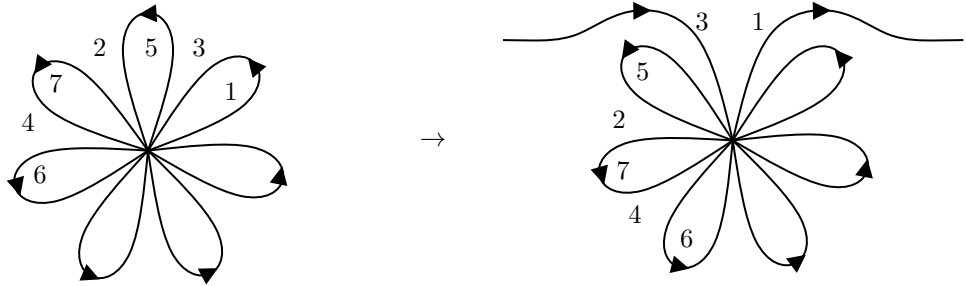

	\centering
	\begin{minipage}{0.45\textwidth}
       		 \centering
		\input{figs/petaldiagram3.pdf_tex}
	  \end{minipage}$\rightarrow$
    \begin{minipage}{0.45\textwidth}
           		 \centering
		\input{figs/petaldiagram3-long.pdf_tex}
	  \end{minipage}
\caption{The following oriented copy of $4_1$ is associated to the permutation $(1, 3, 5, 2, 7, 4, 6)$.}
\label{fig:petalexample}
	  \end{figure}

Every pair $a_i,a_j$, $1\leq i<j\leq n$ shares an arrow, $\alpha_{a_i,a_j}$, where the arrow points from max$\{a_i,a_j\}$ to min$\{a_i,a_j\}$, i.e. pointing from the lower level strand to the higher level strand in the crossing.  The sign for an arrow is shown in Figure \ref{fig:petal-crossing-signs}, and given as 
\[
sgn(\alpha_{a_i,a_j})=\begin{cases}(-1)^{j-i}& \textrm{ if }a_i<a_j,\\
(-1)^{j-i+1}& \textrm{ if }a_j<a_i.\end{cases}
\]

 The value of $j-i$ indicates how many strands are in between strand $a_i$ and $a_j$ in the petal projection.  In a petal diagram oriented counterclockwise, if $j-i$ is odd, then the strands,  in the projection, are traversed in opposite directions, and if this number is even, then they are traversed in the same direction. If $a_i$ is less than $a_j$, then the $i^{th}$ strand is projected on top of the $j^{th}$, and vice versa. This leads to the four possible cases in Figure ~\ref{fig:petal-crossing-signs}, which leads to the above formula\footnote{If $P$ is oriented clockwise, then these conditions are reversed.} for $sgn(\alpha_{a_i,a_j})$.   For example, in Figure \ref{fig:petalexample}, $a_1=1$ and $a_2=3$, $2-1=1$; we see that strand $1$ is traversed pointing upwards, and strand $3$ is traversed going downwards.  This leads to a negative crossing between strand $1$ and strand $3$.   Likewise, when computing the sign of the crossing involving strand $1$ and strand $5$, $a_3=5$, and $3-1=2$; we see that both strand $1$ and strand $5$ are traversed pointing upwards.  This leads to a positive crossing.
 
\begin{figure}[h!]
		\centering
	\begin{minipage}{0.45\textwidth}
       		 \centering
\begingroup%
  \makeatletter%
  \providecommand\color[2][]{%
    \errmessage{(Inkscape) Color is used for the text in Inkscape, but the package 'color.sty' is not loaded}%
    \renewcommand\color[2][]{}%
  }%
  \providecommand\transparent[1]{%
    \errmessage{(Inkscape) Transparency is used (non-zero) for the text in Inkscape, but the package 'transparent.sty' is not loaded}%
    \renewcommand\transparent[1]{}%
  }%
  \providecommand\rotatebox[2]{#2}%
  \newcommand*\fsize{\dimexpr\f@size pt\relax}%
  \newcommand*\lineheight[1]{\fontsize{\fsize}{#1\fsize}\selectfont}%
  \ifx\svgwidth\undefined%
    \setlength{\unitlength}{126.11411642bp}%
    \ifx\svgscale\undefined%
      \relax%
    \else%
      \setlength{\unitlength}{\unitlength * \real{\svgscale}}%
    \fi%
  \else%
    \setlength{\unitlength}{\svgwidth}%
  \fi%
  \global\let\svgwidth\undefined%
  \global\let\svgscale\undefined%
  \makeatother%
  \begin{picture}(1,0.50783274)%
    \lineheight{1}%
    \setlength\tabcolsep{0pt}%
    \put(0,0){\includegraphics[width=\unitlength,page=1]{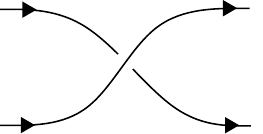}}%
    \put(0.76303369,0.35911849){\color[rgb]{0,0,0}\makebox(0,0)[lt]{\lineheight{1.25}\smash{\begin{tabular}[t]{l}$a_i$\end{tabular}}}}%
    \put(0.14454563,0.35911849){\color[rgb]{0,0,0}\makebox(0,0)[lt]{\lineheight{1.25}\smash{\begin{tabular}[t]{l}$a_j$\end{tabular}}}}%
    \put(0.58591615,0.25454735){\color[rgb]{0,0,0}\makebox(0,0)[lt]{\lineheight{1.25}\smash{\begin{tabular}[t]{l}$-$\end{tabular}}}}%
  \end{picture}%
\endgroup%

		  \end{minipage}
    \begin{minipage}{0.45\textwidth}
           		 \centering
\begingroup%
  \makeatletter%
  \providecommand\color[2][]{%
    \errmessage{(Inkscape) Color is used for the text in Inkscape, but the package 'color.sty' is not loaded}%
    \renewcommand\color[2][]{}%
  }%
  \providecommand\transparent[1]{%
    \errmessage{(Inkscape) Transparency is used (non-zero) for the text in Inkscape, but the package 'transparent.sty' is not loaded}%
    \renewcommand\transparent[1]{}%
  }%
  \providecommand\rotatebox[2]{#2}%
  \newcommand*\fsize{\dimexpr\f@size pt\relax}%
  \newcommand*\lineheight[1]{\fontsize{\fsize}{#1\fsize}\selectfont}%
  \ifx\svgwidth\undefined%
    \setlength{\unitlength}{126.11411642bp}%
    \ifx\svgscale\undefined%
      \relax%
    \else%
      \setlength{\unitlength}{\unitlength * \real{\svgscale}}%
    \fi%
  \else%
    \setlength{\unitlength}{\svgwidth}%
  \fi%
  \global\let\svgwidth\undefined%
  \global\let\svgscale\undefined%
  \makeatother%
  \begin{picture}(1,0.50783274)%
    \lineheight{1}%
    \setlength\tabcolsep{0pt}%
    \put(0,0){\includegraphics[width=\unitlength,page=1]{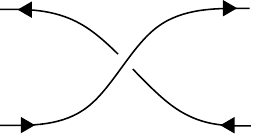}}%
    \put(0.76303369,0.35911849){\color[rgb]{0,0,0}\makebox(0,0)[lt]{\lineheight{1.25}\smash{\begin{tabular}[t]{l}$a_i$\end{tabular}}}}%
    \put(0.14454563,0.35911849){\color[rgb]{0,0,0}\makebox(0,0)[lt]{\lineheight{1.25}\smash{\begin{tabular}[t]{l}$a_j$\end{tabular}}}}%
    \put(0.58591615,0.25454735){\color[rgb]{0,0,0}\makebox(0,0)[lt]{\lineheight{1.25}\smash{\begin{tabular}[t]{l}$+$\end{tabular}}}}%
  \end{picture}%
\endgroup%

	  \end{minipage}
	  
	    \vspace{1cm}
	    
	\begin{minipage}{0.45\textwidth}
       		 \centering
\begingroup%
  \makeatletter%
  \providecommand\color[2][]{%
    \errmessage{(Inkscape) Color is used for the text in Inkscape, but the package 'color.sty' is not loaded}%
    \renewcommand\color[2][]{}%
  }%
  \providecommand\transparent[1]{%
    \errmessage{(Inkscape) Transparency is used (non-zero) for the text in Inkscape, but the package 'transparent.sty' is not loaded}%
    \renewcommand\transparent[1]{}%
  }%
  \providecommand\rotatebox[2]{#2}%
  \newcommand*\fsize{\dimexpr\f@size pt\relax}%
  \newcommand*\lineheight[1]{\fontsize{\fsize}{#1\fsize}\selectfont}%
  \ifx\svgwidth\undefined%
    \setlength{\unitlength}{126.12047908bp}%
    \ifx\svgscale\undefined%
      \relax%
    \else%
      \setlength{\unitlength}{\unitlength * \real{\svgscale}}%
    \fi%
  \else%
    \setlength{\unitlength}{\svgwidth}%
  \fi%
  \global\let\svgwidth\undefined%
  \global\let\svgscale\undefined%
  \makeatother%
  \begin{picture}(1,0.50780712)%
    \lineheight{1}%
    \setlength\tabcolsep{0pt}%
    \put(0,0){\includegraphics[width=\unitlength,page=1]{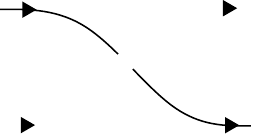}}%
    \put(0.76304564,0.35910037){\color[rgb]{0,0,0}\makebox(0,0)[lt]{\lineheight{1.25}\smash{\begin{tabular}[t]{l}$a_i$\end{tabular}}}}%
    \put(0.14458879,0.35910037){\color[rgb]{0,0,0}\makebox(0,0)[lt]{\lineheight{1.25}\smash{\begin{tabular}[t]{l}$a_j$\end{tabular}}}}%
    \put(0.58593704,0.2545345){\color[rgb]{0,0,0}\makebox(0,0)[lt]{\lineheight{1.25}\smash{\begin{tabular}[t]{l}$+$\end{tabular}}}}%
    \put(0,0){\includegraphics[width=\unitlength,page=2]{pedal-crossing-odd-case2.pdf}}%
  \end{picture}%
\endgroup%

		  \end{minipage}
    \begin{minipage}{0.45\textwidth}
           		 \centering
\begingroup%
  \makeatletter%
  \providecommand\color[2][]{%
    \errmessage{(Inkscape) Color is used for the text in Inkscape, but the package 'color.sty' is not loaded}%
    \renewcommand\color[2][]{}%
  }%
  \providecommand\transparent[1]{%
    \errmessage{(Inkscape) Transparency is used (non-zero) for the text in Inkscape, but the package 'transparent.sty' is not loaded}%
    \renewcommand\transparent[1]{}%
  }%
  \providecommand\rotatebox[2]{#2}%
  \newcommand*\fsize{\dimexpr\f@size pt\relax}%
  \newcommand*\lineheight[1]{\fontsize{\fsize}{#1\fsize}\selectfont}%
  \ifx\svgwidth\undefined%
    \setlength{\unitlength}{126.12047908bp}%
    \ifx\svgscale\undefined%
      \relax%
    \else%
      \setlength{\unitlength}{\unitlength * \real{\svgscale}}%
    \fi%
  \else%
    \setlength{\unitlength}{\svgwidth}%
  \fi%
  \global\let\svgwidth\undefined%
  \global\let\svgscale\undefined%
  \makeatother%
  \begin{picture}(1,0.50780712)%
    \lineheight{1}%
    \setlength\tabcolsep{0pt}%
    \put(0,0){\includegraphics[width=\unitlength,page=1]{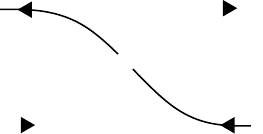}}%
    \put(0.76304564,0.35910037){\color[rgb]{0,0,0}\makebox(0,0)[lt]{\lineheight{1.25}\smash{\begin{tabular}[t]{l}$a_i$\end{tabular}}}}%
    \put(0.14458879,0.35910037){\color[rgb]{0,0,0}\makebox(0,0)[lt]{\lineheight{1.25}\smash{\begin{tabular}[t]{l}$a_j$\end{tabular}}}}%
    \put(0.58593704,0.2545345){\color[rgb]{0,0,0}\makebox(0,0)[lt]{\lineheight{1.25}\smash{\begin{tabular}[t]{l}$-$\end{tabular}}}}%
    \put(0,0){\includegraphics[width=\unitlength,page=2]{pedal-crossing-even-case2.pdf}}%
  \end{picture}%
\endgroup%

	  \end{minipage}
	  \caption{Possible configurations for strands within a multicrossing. Top row has $a_i<a_j$, so that $a_i$ is the over strand and the bottom row has $a_j<a_i$.  The left column has $j-i$ is odd, and the right column has $j-i$ is even.}
	    \label{fig:petal-crossing-signs}
\end{figure}

Once we have the multiple crossing Gauss Diagram, we count the relevant subdiagrams. A subdiagram is relevant if it represents a crossing that will give a non-zero contribution to the sum that determines the value of the Casson invariant.  If we count the subdiagrams algorithmically, we can track the value of the sum by adding the contribution of each subdiagram to a ``Casson count"; once we have iterated through all subdiagrams, this count will be equal to the value of the Casson invariant for that Gauss Diagram.  

 Let $1\leq i<j<k\leq n$;  the subdiagram on $a_i,a_j$ and $a_k$ is equal to one of the triple crossing diagrams $\vvcenteredinclude{.3}{triangle-tab-right-cases.pdf}$ or $\vvcenteredinclude{.3}{triangle-tab-left-cases.pdf}$ found in Table ~\ref{tb:triplecrossing} only if $a_j<a_i$ and $a_j<a_k$.  Therefore, one may iterate through the ordered common endpoints of the Gauss diagram $\{a_2, \dots, a_n, 1\}$ and find all such triples.  

For each such triple:
\begin{itemize}
\item If $sgn(\alpha_{a_i,a_j})=sgn(\alpha_{a_i,a_k})=sgn(\alpha_{a_j,a_k})$, then add $1$ to the Casson count.
\item Else if $a_k<a_i$ and $sgn(\alpha_{a_i,a_j})=sgn(\alpha_{a_i,a_k})$ and $sgn(\alpha_{a_i,a_k})=-sgn(\alpha_{a_j,a_k})$, then add $-1$ to the Casson count.
\item Else if $a_k>a_i$ and $sgn(\alpha_{a_j,a_k})=sgn(\alpha_{a_i,a_k})$ and $sgn(\alpha_{a_i,a_k})=-sgn(\alpha_{a_i,a_j})$, then add $-1$ to the Casson count.
\item Else add $0$ to the count.
\end{itemize}

In addition to the contributions from triple crossing diagrams, there are also contributions from quadruples of strands which represent embeddings of $\vvcenteredinclude{.3}{gd_c2_chord.pdf}$ in the multi-crossing Gauss diagram.  Let $1\leq i<j<k<l\leq n$; the subdiagram on $a_i,a_j, a_k$, and $a_l$ only if $a_k<a_i$ and $a_j<a_l$.  Again, for all such quadruples, if $sgn(\alpha_{a_i,a_k})=sgn(\alpha_{a_j,a_l})$, then $1$ is added to the Casson count, and if $sgn(\alpha_{a_i,a_k})=-sgn(\alpha_{a_j,a_l})$, then $-1$ is added to the Casson count.

For the petal diagram shown on Figure \ref{fig:petalexample}, the associated Gauss diagram will have common endpoints labeled in order from left to right as $3,5,2,7,4,6,1$.  The relevant triples crossing diagrams in this Gauss diagram will be given by strands $\{3,2,7\}$, $\{3,2,4\}$, $\{3,2,6\}$, $\{5,2,7\}$, $\{5,2,4\}$, $\{5,2,6\}$, $\{5,4,6\}$, and $\{7,4,6\}$.  The triples $\{3,2,7\}$, $\{3,2,6\}$, and $\{5,2,4\}$ contribute 1 each, the triple  $\{3,2,4\}$ contributes $-1$, and the other $4$ contribute $0$.  Additionally, we find that the quadruples $\{3,5,2,7\}$, $\{3,5,2,6\}$, and $\{5,2,4,6\}$ each contribute $-1$.  So, the final Casson count is $-1$, as desired. The above techniques yield a simple lower bound for both the {\em petal} and the {\em {\"u}bercrossing number} as stated in Corollary \ref{cor:uber-crossing},
	\[
	 |c_2(K)|\leq {\textit{\"u}(K) -1 \choose 3}+{ \textit{\"u}(K) -1 \choose 4}.
	\]
Indeed, the triple crossings contribute at most  ${\textit{\"u}(K) -1 \choose 3}$ to $c_2(K)$ and the quadruples at most ${ \textit{\"u}(K) -1 \choose 4}$.  Since the last endpoint on the multiple crossing Gauss diagram is always labeled as $1$, no triple or quadruple subset of endpoints which contains $1$ can satisfy the necessary equalities on the endpoints for that subset to contribute to the Casson count.  So, there are at most ${\textit{\"u}(K) -1 \choose 3}$ triples and ${ \textit{\"u}(K) -1 \choose 4}$ quadruples that can contribute.


\appendix

\section{Diffeology and differential forms on $\K$}\label{sec:diffeology}

A {\em diffeology} on a non-empty set $X$ is a set of parameterizations $\mathcal{D}=\{f:U\longrightarrow X\ |\ U\ \text{open in}\ \R^n,\ n=n(f)\}$ of $X$, called \emph {plots}, such that three axioms are satisfied:

\begin{itemize}
	\item[$(i)$] The set $\mathcal{D}$ contains all constant parameterizations $r\to x$ defined on $\R^n$, for all $x\in X$ and $n\geq 0$.
	
	\item[$(ii)$] Let $f : U \to X$ be a parameterization.  If for every point $r\in U$ there exists an open neighborhood $V$ of $r$ such that $f|_V$ belongs to $\mathcal{D}$, then $f$ belongs to $\mathcal{D}.$  
	
	\item[$(iii)$] For every plot $f : U \to X$ of $\mathcal{D}$, every real domain $V$, and every $g\in C^\infty(V,U),$ $f\circ g$ belongs to $\mathcal{D}$.
\end{itemize}

A \emph{long curve} is a smooth curve $\R\to\R^3$ which is the inclusion of the first coordinate axis except on a compact subset of $\R^3$. The \emph{space of long curves} will be denoted by $\textrm{Map}_c(\R,\R^3)$, and the long knots $\K$ is a subspace of $\textrm{Map}_c(\R,\R^3)$.  
A diffeology on the space of long knots $\K$ is induced from the canonical diffeology on $\textrm{Map}_c(\R,\R^3)$: a map $f:U\to \textrm{Map}_c(\R,\R^3)$ from an open $U\subset\R^l$ is a plot, if and only if the composite
\[
U\times \R\xrightarrow{\ f\times\textrm{id}\ } \textrm{Map}_c(\R,\R^3)\times \R\xrightarrow{\ \operatorname{ev}\ } \R^3
\]
 is smooth.  In \cite[Lemma~A.1.7]{Waldorf:2012}, it is shown that this diffeology on $\textrm{Map}_c(\R,\R^3)$ is equivalent to the \emph{smooth diffeology} on $\textrm{Map}_c(\R,\R^3)$, where plots are all smooth maps. 
A differential form $\omega\in \Omega^j(\K)$ on $\K$ is an assignment of a form $(\omega)_f$ on $U$ for each plot $f:U\longrightarrow \K$, such that, given a smooth map $h:V\longrightarrow U$ which induces a plot $f\circ h: V\longrightarrow \R^n$, we have: $(\omega)_{f\circ h}=h^\ast((\omega)_f).$

\end{document}